\documentclass[a4paper,11pt]{article}
\usepackage[utf8]{inputenc}
\usepackage[english]{babel}
\usepackage{amsfonts}
\usepackage{amsthm}
\usepackage{amsmath}
\usepackage{mathtools}
\usepackage{amsfonts}
\usepackage{latexsym}
\usepackage{amssymb}
\usepackage{dsfont}
\usepackage{graphicx}
\usepackage[usenames]{color}
\usepackage{algorithmic}
\usepackage[ruled,section]{algorithm}
\usepackage{verbatim}

\newtheorem{fed}{Definition}[section]
\newtheorem*{fed*}{Definition}
\newtheorem*{feds*}{Definitions}
\newtheorem{teo}[fed]{Theorem}
\newtheorem*{teo*}{Theorem}
\newtheorem{lem}[fed]{Lemma}

\newtheorem{pro}[fed]{Proposition}
\theoremstyle{definition}
\newtheorem{rem}[fed]{Remark}

\newtheorem*{rems*}{Remarks}

\newtheorem{exa}[fed]{Example}

\newtheorem{nota}[fed]{Notation}

\usepackage{hyperref}
\def\coma{\, , \, }

\def\py{\peso{and}}
\newcommand{\peso}[1]{ \quad \text{ #1 } \quad }

\def\ben{\begin{enumerate}}
\def\een{\end{enumerate}}

\def\la{\lambda}

\def\R{\mathbb{R}}
\def\C{\mathbb{C}}

\def\cA{\mathcal{A}}

\def\cC{\mathcal{C}}
\def\cD{\mathcal{D}}

\def\cH{\mathcal{H}}
\def\cK{\mathcal{K}}
\def\cP{\mathcal{P}}
\def\cQ{\mathcal{Q}}

\def\cS{{\cal S}}
\def\cT{{\cal T}}

\def\cB{{\cal B}}

\def\cV{{\cal V}}
\def\cU{{\cal U}}
\def\cW{{\cal W}}
\def\cX{\mathcal{X}}
\def\cY{\mathcal{Y}}

\def\rk{\text{\rm rk}}

\def\cG{\mathcal G}

\def\da{^\downarrow}

\def\R{\mathbb{R}}
\def\C{\mathbb{C}}
\def\K{\mathbb{K}}

\def\beq{\begin{equation}}
\def\eeq{\end{equation}}

\newcommand{\uniinv}[1]{{\left\| #1 \right\|}}

\usepackage{tabularx}
\usepackage{diagbox} 
\usepackage{multirow}

\usepackage{graphicx} 

\usepackage{wrapfig} 
\usepackage{caption} 
\usepackage{subcaption} 

\usepackage{tikz}
\usetikzlibrary{cd} 
\usetikzlibrary{babel}
\usetikzlibrary{calc}

\begin{document} 

\title{$\Lambda$-admissible subspaces of self-adjoint matrices}
\author{Francisco Arrieta Zuccalli and Pedro Massey
\footnote{Partially supported by CONICET
(PICT ANPCyT 1505/15) and  Universidad Nacional de La Plata (UNLP 11X829) 
 e-mail addresses: farrieta@mate.unlp.edu.ar , massey@mate.unlp.edu.ar}
\\
{\small Centro de Matem\'atica, FCE-UNLP,  La Plata
and IAM-CONICET, Argentina }}
\date{}
\maketitle

\begin{abstract}
Given a self-adjoint matrix $A$ and an index $h$ such that $\lambda_h(A)$ lies in a cluster of eigenvalues of $A$, we introduce the novel class of $\Lambda$-admissible subspaces of $A$ of dimension $h$. 
First, we show that the low-rank approximation of the form $P_{\mathcal{T}} A P_{\mathcal{T}}$, for a subspace $\mathcal{T}$ that is close to any $\Lambda$-admissible subspace of $A$, has nice properties. Then, we prove that some well-known iterative algorithms (such as the Subspace Iteration Method, or the Krylov subspace method) produce subspaces that become arbitrarily close to $\Lambda$-admissible subspaces. We obtain upper bounds for the distance between subspaces obtained by the Rayleigh-Ritz method applied to $A$ and the class of $\Lambda$-admissible subspaces. 
We also find upper bounds for the condition number of the (set-valued) map computing the class of $\Lambda$-admissible subspaces of $A$. 
Finally, we include numerical examples that show the advantage of considering this new class of subspaces in the clustered eigenvalue setting.
\end{abstract}

\noindent  AMS subject classification: 42C15, 15A60.

\noindent Keywords: principal angles, Ritz values, Rayleigh quotients, majorization.

\section{Introduction}\label{sec intro}

One central task in numerical linear algebra is to
compute {\it nice} self-adjoint low-rank approximations $\tilde A$ of $n\times n$ self-adjoint matrices $A$. Even when $A$ has a rich inner structure, we can be interested in reducing the computational cost of manipulations of $A$.
By {\it nice}, we mean low-rank approximations of $A$ that provide
good approximations of its extreme eigenvalues and are such that the approximation error $\|A-\tilde A\|$ is close to  $\min\{\|A-B\|:\ B$ is self-adjoint, rk$(B)\leq h\}$ (see \cite{MM15}).

One standard approach for the computation of such nice low-rank approximations is the construction of suitable subspaces $\cT\subset \C^n$ with $\dim \cT=h$ so that $\tilde A$ becomes the compression of $A$ to $\cT$. That is, $\tilde A=P_\cT\,A\,P_\cT$, where $P_\cT$ denotes the orthogonal projection onto $\cT$.
If we assume further that $A$ is positive semidefinite and that we are interested in the approximation of the largest
$h$ eigenvalues $\la_1\geq \ldots\geq \la_h\geq 0$ of $A$, then we typically seek subspaces $\cT$ that are close
(with respect to some angular metric) to dominant eigenspaces, i.e.
subspaces $\cX_h$ spanned by orthonormal systems $\{x_1,\ldots,x_h\}$ of eigenvectors of $A$ corresponding to $\la_1\geq \ldots\geq \la_h$. In this case, the closer that $\cT$ and $\cX_h$ are, the better $P_\cT A P_\cT$ is as a low-rank approximation of $A$.

There are several numerical methods for the computation of subspaces $\cT$ as above (e.g. when $A$ is positive semidefinite as before); among them, we mention the Subspace Iteration Method (SIM) and the Krylov subspace method (see \cite{Saad}). The convergence analysis of the subspaces $\cT$ constructed by these numerical methods to the dominant eigenspace $\cX_h$, obtained in the deterministic setting (intimately related to our present approach), reveals that 
the inverse of the eigen-gap $(\la_h-\la_{h+1})^{-1}$ plays a key role \cite{AZMS,DK70,D19,Sai19,WWZ}. Explicitly, the smaller the inverse of the eigen-gap is, the faster these numerical methods converge. Thus, these results provide theoretical foundations for the application of methods such as the SIM and the Krylov method when the eigen-gap is significant. Moreover, theoretical results about the condition number of the computation of $h$-dimensional dominant eigenspaces also reveal the role played by the inverse of the eigen-gap $(\la_h-\la_{h+1})^{-1}$ in the approximation of these dominant eigenspaces (see \cite{Sun,Nick}). 

Our main motivation for this work is the introduction of new tools and results that allow us to deal with the analysis of 
the quality of subspaces $\cT$, with $\dim \cT=h$, so that  $P_\cT A P_\cT$ is a nice 
low-rank approximation of $A$, in case $\la_h$ lies in a cluster of eigenvalues. Thus, our work can be regarded as complementary to the previous literature. To put our work in context, consider the case where $\la_h-\la_{h+1}=0$. Notice that this case can not be analyzed as a limiting case when $\la_h-\la_{h+1}$ tends to zero: the main reason for this seems to be that, in the limit $\la_h-\la_{h+1}=0$, instead of getting a uniquely-defined dominant eigenspace $\cX_h$ of dimension $h$, we get infinitely many.
At this point, it is essential to notice if $\cT$ is sufficiently close to {\it any} dominant subspace of $A$, then $P_\cT A\,P_\cT$ will be a nice low-rank approximation of $A$.
This last fact suggests a shift of perspective in the analysis, so that given a suitable candidate subspace $\cT$ with $\dim\cT=h$, 
we should be interested in bounding from above the angular
distance between $\cT$ and the class of $h$-dimensional dominant subspaces of $A$. Indeed, if we measure the distance between $h$-dimensional subspaces $\cS$ and $\cT$ in terms of the sine of the largest principal angle \cite{Nere, QZL05, SS90} we should be interested in computing an upper bound for 
$$
\inf\{ \sin \theta_{\max}(\cX,\cT)\ : \ \cX \ \text{ is a dominant subspace for $A$, $\dim(\cX)=h$} \ \}\,.
$$
Notice that bounding the quantity above is a non-trivial problem:  while $\sin \theta_{\max}(\cX,\cT)$ can be large for some dominant subspace $\cX$ of $A$, 
$\sin \theta_{\max}(\tilde \cX,\cT)$ can be quite small for some other dominant subspace $\tilde \cX$ of $A$. In turn, this shift of perspective induces 
what we call a proximity analysis to the class of dominant subspaces, rather than a convergence analysis to a fixed dominant subspace. In this work, we pursue this novel perspective of proximity analysis.

Our approach actually allows us to consider a more flexible setting: assume that $\la_h$ lies in a cluster of eigenvalues: in this case, we consider (enveloping) indices
$1\leq j<h< k\leq \text{rank}(A)$ such that a cluster of eigenvalues of $A$
is formed by $\la_{j+1}\geq \ldots\geq\la_k$ in such a way that the spread of the cluster $\delta=\la_{j+1}-\la_k$ is small and the eigen-gaps $\la_j-\la_{j+1}$ and $\la_k-\la_{k+1}$ are significant (notice that we are allowed to choose indices, instead of requiring that there is an eigen-gap at the prescribed index $h$). In this setting, we introduce the class of $\Lambda$-admissible subspaces of $A$, given by
$$
\Lambda\text{-adm}_h(A)=\{\cS:\ \dim \cS=h\, , \ \cX_j\subset \cS\subset \cX_k\}
$$ where $\cX_j$ and $\cX_k$ are the uniquely defined dominant eigenspaces of $A$ of dimensions $j$ and $k$ respectively, since we are assuming eigen-gaps at those indices (notice that if $\la_h=\la_{h+1}$, then the class of $h$-dimensional dominant subspaces of $A$ coincides with $\Lambda\text{-adm}_h(A)$ when we choose
$j=\max\{\ell: \la_\ell>\la_h\}$ and $k=\max\{\ell:\ \la_h=\la_\ell\}$).
We show that any $h$-dimensional $\Lambda$-admissible subspace $\cS$ of $A$ provides a nice low-rank approximation of $A$, in the sense 
that the approximation error $\uniinv{A-P_\cS A P_\cS}$ is 
close to the minimal error (for an arbitrary unitarily 
invariant norm $\uniinv{\cdot}$) and the first $h$ eigenvalues of $P_\cS A P_\cS$ are close to those of $A$, up to an additive constant that depends on the spread of the cluster (the smaller the spread is, the more accurate the estimates are).
 On the other hand, we obtain some first quantitative results showing that $h$-dimensional subspaces $\cT$ that are close to some  $\Lambda$-admissible subspace of dimension $h$ will also induce nice low-rank approximations $P_\cT A P_\cT$ of $A$. 
 We will consider a detailed quantitative analysis of the quality of  $P_\cT A P_\cT$  as a low-rank approximation of $A$ (in the clustered eigenvalue setting) in terms of the distance  $d(\Lambda\text{-adm}_h(A),\cT)$ elsewhere.

Our first main result computes 
an informative upper bound for the distance 
$$d(\Lambda\text{-adm}_h(A),\cT)=
\inf\{ \sin \theta_{\max}(\cS,\cT)\ : \ \cS \in 
\Lambda\text{-adm}_h(A)  \ \}\,.
$$
This upper bound allows us to tackle several related problems: 
we consider 
subspaces of the form $\phi(A)(\cW)$, for polynomials $\phi(x)$ and initial subspaces $\cW$, with $\dim \cW=r$ for some $h\leq r<k$ (which play a central role in the analysis of some  iterative methods such as SIM or the Krylov subspace method); we show that, under some natural (generic) hypotheses,  $\phi(A)(\cW)$ contains $h$-dimensional subspaces that become arbitrarily close to the class $\Lambda\text{-adm}_h(A)$. Thus, our results show that several well-known numerical methods can be used to construct approximations of $\Lambda$-admissible subspaces. On the other hand, 
motivated by some recent results by Nakatsukasa (\cite{Nakats17,Nakats}), we obtain upper bounds for the distance between the class $\Lambda\text{-adm}_h(A)$ and subspaces that are constructed using the Rayleigh-Ritz method applied to $A$ and a trial subspace $\cQ$ with $\dim \cQ=r$, for some $h\leq r<k$ (recall that $\la_{j+1}\geq \ldots\geq \la_k$ denote the eigenvalues in the cluster); notice that in this setting, previous results in the literature can only take advantage of the eigen-gap $\la_j>\la_{j+1}$, that corresponds to the analysis of subspaces of dimension $j<h$.
Finally, we complement the previous results by defining a suitable condition number for the computation of the classes of $\Lambda$-admissible subspaces
(in the context of set-valued functions). Our results regarding the last two topics show that both the inverse of the eigen-gaps $(\la_j-\la_{j+1})^{-1}$ and $(\la_k-\la_{k+1})^{-1}$ play a key role in our analysis.
Hence, the ideal scenario for the application of 
our results is when we can find enveloping indices $j<h<k$ such that the eigen-gaps
$\la_j-\la_{j+1}$ and $\la_k-\la_{k+1}$ are significant, and the spread of the cluster $\delta=\la_{j+1}-\la_k$ is small. Our perspective is in the vein of \cite{Drineassingap} in the sense that we can obtain nice approximations $\tilde A$ of $A$ with rk$(\tilde A)=h$ (in terms of $\Lambda$-admissible subspaces) even when $\la_h=\la_{h+1}$.

The paper is organized as follows. In Section \ref{sec prelis} we describe notations and notions used throughout the paper.
In Section \ref{sec num discusion} we introduce $\Lambda$-admissible and consider their basic approximation properties.
In Section \ref{sec main results} we present our main results: 
first, we obtain an upper bound for the distance between a generic subspace $\cT\subset \K^n$ with $\dim\cT\geq h$ and the class $\Lambda\text{-adm}_h(A)$. Then, we show that many well-known iterative algorithms used for computing approximations of dominant eigenspaces, produce subspaces that become close to $\Lambda\text{-adm}_h(A)$.
We also obtain upper bounds for the distance between subspaces constructed using the Rayleigh-Ritz method using a trial subspace and the class $\Lambda\text{-adm}_h(A)$. We complement the previous analysis with upper bounds for the sensitivity of the computation of $\Lambda$-admissible subspaces.
In Section \ref{sec num exa} we present several numerical examples that test our main results and compare them with previous results in the literature; these numerical examples show the advantages of considering the class $\Lambda\text{-adm}_h(A)$ in the clustered eigenvalue setting. Finally, in Section \ref{appendix} we present the proofs of all the previous results.

\section{Preliminaries}\label{sec prelis}

Here, we include some definitions, notations, and well-known results necessary in our work.

\subsection{General notation}

Throughout this work, $\K$ denotes the field of real or complex numbers. 
We let $\K^{n\times m}$ be the space of $n\times m$ matrices with entries in $\K$, and we use $I$ and 0 to denote the identity and null matrices, whose sizes will be clear from context. 
The Grassmannian of $\ell$-dimensional subspaces of $\K^n$ will be denoted as $\cG_{n,\,\ell}=\{\cS\subset \K^n\,:\, \dim\cS=\ell\}$. 
We will use $\|\cdot \|$ to denote an unitarily invariant norm (briefly u.i.n.) in $\K^{n\times n}$ (i.e. a norm such that 
$\|UAV\|=\|A\|$, for every $A\in \K^{n\times n}$ and unitary or orthogonal matrices $U,\, V\in \K^{n\times n}$).
For example, the operator (or spectral) and Frobenius norms, denoted by $\|\cdot\|_2$ and $\|\cdot \|_F$ respectively, are u.i.n.'s.  
If $\cV\subset \K^n$ is a subspace, we let $P_\cV\in\K^{n\times n}$ denote the orthogonal projection onto $\cV$. 

For $A\in\K^{m\times n}$, we denote it's Moore–Penrose pseudo-inverse by $A^\dagger$. Among other basic properties, we use the fact that $AA^\dagger=P_{R(A)}$ and $A^\dagger A=P_{\ker(A)^\perp}=P_{R(A^*)}$, where $R(A)$ denotes the subspace of $\K^m$ spanned by the columns of $A$.

Given a vector $x\in\K^n$, we denote by $\text{diag}(x)\in \K^{n\times n}$ the diagonal matrix whose main diagonal is $x$.
If  $x=(x_i)_{1\leq i\leq n}\in\R^n$ we denote by $x\da=(x_i\da)_{1\leq i\leq n}$ the vector obtained by rearranging the entries of $x$ in non-increasing order. We also use the notation
$(\R^n)\da=\{x\in\R^n\ :\ x=x\da \}$.
If $x,\,y\in \R^n$ then $x$ is submajorized by $y$, denoted $x\prec_w y$, if $\sum_{i=1}^k{(x^\downarrow)}_i\leq \sum_{i=1}^k{(y^\downarrow)}_i $ for $1\leq k\leq n$.
If $x\prec_w y$ and $\sum_{i=1}^n x_i=\sum_{i=1}^n y_i$, we say that $x$ is majorized by $y$, and write $x\prec y$. For a detailed account on majorization theory see R. Bhatia's book \cite{Bhatia}.
Given an Hermitian matrix $A\in \K^{n\times n}$ 
we denote by $\la(A)=(\la_i(A))_{1\leq i\leq n}\in (\R^n)\da$ 
the eigenvalues of $A$, counting multiplicities and arranged in non-increasing order.   
Finally, we denote by $\rk(A)$ the rank of $A$.

\subsection{Principal angles between subspaces}\label{sec aux angles}

Let $\cS,\,\cT\subset \K^n$ be two subspaces such that $\dim\cS=s\leq t=\dim\cT$. Let $ S\in \K^{n\times s}$ and $ T\in\K^{n\times t}$ 
have orthonormal columns and 
ranges given by $\cS$ and $\cT$, respectively. 
Following \cite[Section I.5.2]{SS90}, we define the principal (also called canonical) angles between the subspaces $\cS$ and $\cT$, denoted 
$0\leq \theta_1(\cS,\cT)\leq \ldots\leq \theta_s(\cS,\cT)\leq \frac{\pi}{2}$,
determined by the identities $\cos(\theta_{i}(\cS,\cT))=\sigma_i( S^* T)$, for $1\leq i\leq s$.
These can also be determined in terms of the identities 
\begin{equation}\label{eq sobre sen ang princ1}
\sin(\theta_{s-i+1}(\cS,\cT))=\sigma_i( ( I-  T T^*) S)=\sigma_i( ( I-  T T^*) S  S^*)=\sigma_i(( I- P_\cT) P_\cS)
\ , \ \
1\leq i\leq s
\end{equation}
Following \cite{SS90} we let $ \theta(\cS,\cT)=(\theta_1(\cS,\cT),\ldots,\theta_s(\cS,\cT))$ denote the vector of principal angles and $ \Theta(\cS,\cT)=\text{diag}
(\theta(\cS,\,\cT))$ denote the diagonal matrix with the principal angles in its main diagonal. 
In case $s=0$ (i.e. $\cS=\{0\}$) we define $\theta(\cS,\cT)=\Theta(\cS,\cT)=0\in \R$. 
It turns out that 
$\|\sin  \Theta(\cS,\cT)\|_{2,F}=\|( I- P_\cT) P_\cS\|_{2,F}$ are (scalar measures of the angular) distances between $\cS$ and $\cT$ (see 
\cite[Section II.4.1]{SS90}). 

\smallskip
\noindent With the previous notation, if $\cS'\subset\cS$ and $\cT\subset \cT'$ are subspaces with $\dim\cS'=s'$, then 
\begin{equation}\label{eq monotonia de ang}
    \Theta(\cS,\cT') \leq  \Theta(\cS,\cT)
    \quad\text{and}\quad
    \theta_{s'+1-i}(\cS',\,\cT)
    \leq
    \theta_{s+1-i}(\cS,\,\cT)
    \quad\text{for}\quad
    1\leq i\leq s'
\end{equation}
which follow from Eq. \eqref{eq sobre sen ang princ1}. Using Equation \eqref{eq monotonia de ang}, the fact that  $\sin x$ is an increasing in $[0,\frac{\pi}{2}]$ and the monotonicity of the u.i.n.'s, we get that
\begin{equation}\label{eq monotonia de ang2}
    \uniinv{\Theta(\cS,\cT')}
    ,\,
    \uniinv{\Theta(\cS',\cT)}
    \leq
    \uniinv{\Theta(\cS,\cT)}
    \ \ , \ \
        \uniinv{\sin\Theta(\cS,\cT')}
    ,\,
    \uniinv{\sin\Theta(\cS',\cT)}
    \leq
    \uniinv{\sin\Theta(\cS,\cT)}\,.
\end{equation}

\noindent We will refer to Equations \eqref{eq monotonia de ang} and \eqref{eq monotonia de ang2} as the \textit{monotonicity of the principal angles}.

\begin{rem}
It is well-known that the following conditions are equivalent:
\begin{enumerate}
    \item $\Theta(\cS,\,\cT)<\frac{\pi}{2} \, I$ (all the angles between $\cS$ and $\cT$ are strictly smaller the $\pi/2$);
    \item $\cS\cap\cT^\bot=\{0\}$ ($\cS$ has no non zero vectors that are orthogonal to $\cT$);
    \item $\rk(T^*S)=\rk(S)=s$ (that is, $T^*S$ is full column rank or $S^*T$ is full row rank).
\end{enumerate}
We will use these conditions interchangeably throughout the text.
\end{rem}

\subsection{Dominant eigenspaces of Hermitian matrices}
In what follows, we recall the class of dominant eigenspaces of arbitrary Hermitian matrices, and some of their basic properties. We consider the following
\begin{nota}\label{nota inicial}
    Let $\K=\R$ or $\K=\C$. In this work, we consider: 

\smallskip

\noindent 1.  A self-adjoint matrix $A\in\K^{n\times n}$; we let $\lambda(A)=(\lambda_1,\ldots,\lambda_n)\in (\R^n)\da$  denote the vector of eigenvalues of $A$, counting multiplicities and arranged in non-increasing order i.e $\la_1\geq \ldots\geq \la_n$.

\medskip

\noindent 2. An eigendecomposition (or unitary diagonalization) of $A$ given by $A=X\,\Lambda X^*$, where $\Lambda=\text{diag}(\lambda(A))$ and $X\in\K^{n\times n}$ is a unitary matrix whose columns are denoted by $x_1,\ldots, x_n\in \K^n$, respectively. By construction, $Ax_i=\lambda_i\,x_i$, for $1\leq i\leq n$. Also, given $1\leq \ell\leq n$ we set 
        $$\cX_\ell:=\overline{\{ x_1,\ldots, x_\ell\}}\subset \K^n\,.$$
\end{nota}

\begin{fed}\label{defi sub eigendom}
    Consider Notation \ref{nota inicial} and let $\cS\subset \K^n$ be such that $\dim \cS=h$. We say that $\cS$ is an $h$-dimensional dominant eigenspace of $A$ if there exists an orthonormal basis $\{s_1,\ldots,s_h\}$ of $\cS$ such that $As_i=\lambda_i \,s_i$, for $1\leq i\leq h$.
\end{fed}

\begin{rem}[Dominant eigenspaces and eigendecompositions]\label{rem dom eigenspa1}
Consider Notation \ref{nota inicial}. Then, given $1\leq \ell\leq n$, we see that 
$\cX_\ell$ 
is an $\ell$-dimensional dominant eigenspace of $A$.
Conversely, if $\cS$ is an $\ell$-dimensional dominant eigenspace of $A$ then there exists an eigendecomposition $A=\tilde X\Lambda {\tilde X}^*$ 
such that $\tilde X$ is a unitary matrix with columns $\tilde  x_1,\ldots,\tilde x_n$ and such that $\cS=\overline{\{ \tilde x_1,\ldots, \tilde x_\ell\}}$.

It is well known that if $\ell<n$ and $\lambda_\ell>\lambda_{\ell+1}$, then the dominant eigenspace for $A$ of dimension $\ell$ is uniquely determined; hence, in this case, $\cX_\ell$ does not depend on the particular choice of eigendecomposition $X\Lambda X^*$ being considered for $A$. 
\end{rem}

\begin{rem}[Dominant eigenspaces and eigenvalue approximations]\label{rem dom eigenspa2ymedio}
Consider Notation \ref{nota inicial}. Notice that $h$-dimensional dominant eigenspaces $\cX_h\subset \K^n$ of $A$ are exactly those subspaces that allow us to get the exact $h$ largest eigenvalues $\la_1\geq \ldots\geq \la_h$ of $A$ by applying the Rayleigh-Ritz method, i.e. by considering the eigenvalues of the compression $P_{\cX_h} A P_{\cX_h}$. Hence, given an $h$-dimensional subspace $\tilde \cX\subset \K^n$ that is close to ${\cX_h}$, by continuity of the eigenvalues, we get that the eigenvalues of the compression $P_{\tilde \cX} A  P_{\tilde \cX}$ should also be close to the exact eigenvalues $\la_1\geq \ldots\geq \la_h$ of $A$. We can even get quantitative bounds on the error of the approximation in terms of the principal angles between ${\cX_h}$ and $\tilde \cX$. See for example \cite[Thm. 6]{Drineassingap}.
\end{rem}

\begin{rem}[Dominant eigenspaces and low-rank approximations]\label{rem dom eigenspa2}
Consider Notation \ref{nota inicial} and assume further that 
$1\leq h\leq n$ is such that $\lambda_h\geq 0$. Then,
dominant eigenspaces allow us to construct optimal low-rank positive approximations of $A$ as follows: if we let $A_h=P_{\cX_h}A P_{\cX_h}$
then $A_h\geq 0$ and it satisfies that
\begin{equation}\label{eq prop trunc aprox}
\| A - A_h\|\leq \|A-C\|  \peso{for}  C\geq 0 \ , \ \rk(C)\leq h\,,    
\end{equation}
 where $\|\cdot\|$ denotes an arbitrary u.i.n. 
Indeed, using Lidskii's inequality for self-adjoint matrices \cite[Corollary III.4.2]{Bhatia}, we get that
$((\lambda_i(A)-\lambda_i(C))_{i=1}^h,(\lambda_i(A))_{i=h+1}^n)=
\lambda(A)-\lambda(C)\prec \lambda(A-C)\in\R^n
$, where we used that $\rk(C)\leq h$. On the other hand, 
$\lambda(A-A_h)=(\lambda_{h+1},\ldots,\lambda_n,0,\ldots,0)\da\in\R^n$, by construction. Hence, we see that 
$
|\lambda(A-A_h)|\prec_w |\lambda(A)-\lambda(C)|\prec_w|\lambda(A-C)|
$, where we used that $f(x)=|x|$, for $x\in\R$, is a convex function and the properties of vector majorization in $\R^n$. The previous submajorization relation together with the properties of unitarily invariant norms imply Eq. \eqref{eq prop trunc aprox}. 
We call $A_h$ the {\it truncated eigendecomposition} of $A$ (corresponding to the index $h$).
\end{rem}

\section{Introducing \texorpdfstring{$\Lambda$}{Λ}-admissible subspaces}\label{sec num discusion}

In this section, we introduce the notion of $\Lambda$-admissible subspaces of a self-adjoint matrix $A$, which are suitable substitutes for dominant eigenspaces in the clustered eigenvalue setting.

Consider Notation \ref{nota inicial}. Hence, $A\in\K^{n\times n}$ is a self-adjoint matrix, with eigenvalue list $\lambda(A)=(\lambda_1, \ldots,\lambda_n)\in(\R^n)^\downarrow$ 
and set $\lambda_0:=\infty$. For a target dimension $1\leq h\ll n$, we consider any fixed (enveloping) indices $0\leq j<h< k\leq \rk(A)$ such that 
\begin{equation}\label{eq para admsx28}
\lambda_j>\lambda_{j+1}      
\peso{and} \lambda_k>\lambda_{k+1}\,.    
\end{equation} 
Consider an eigendecomposition $A=X\Lambda X^*$ of $A$; in this case, 
the unique dominant eigenspaces of $A$ of dimensions $j$ and $k$ are given by $\cX_j$ and $\cX_k$, respectively; in case $j=0$ we let $\cX_0=\{0\}$.

\medskip

\noindent We introduce the following notion that will play a central role in our work.

\begin{fed}\label{defi sub adms}
  Let $A\in\K^{n\times n}$ be self-adjoint and let $0\leq j<h< k\leq\rk(A)$ be such that they satisfy
 Eq. \eqref{eq para admsx28}. Given an $h$-dimensional subspace $\cS\subset \K^n$, we say that $\cS$ is {\it $\Lambda$-admissible} for $A$ if 
 $$ \cX_j\subset \cS\subset \cX_k\,. $$ 
\end{fed}

Formally, the notion of $\Lambda$-admissible subspace depends on several parameters (e.g. $j,\,h$ and $k$). Nevertheless, we will avoid this level of formalism and follow the (rather informal) description given in Definition \ref{defi sub adms} above throughout this section.

With the previous notation, in the case that $j=\max\{\ell: \la_\ell>\la_h\}$
and $k=\max\{\ell: \la_h=\la_\ell\}$ (i.e. $\lambda_j>\lambda_{j+1}=\ldots=\lambda_h=\ldots=\lambda_k>\lambda_{k+1}$) Definitions \ref{defi sub eigendom} and \ref{defi sub adms} coincide. That is, the classes of $h$-dimensional dominant eigenspaces and $h$-dimensional $\Lambda$-admissible subspaces of $A$ coincide. 

The motivation for the introduction of the class of $\Lambda$-admissible subspaces of a Hermitian matrix is related to the motivation for the introduction of left and right admissible subspaces of rectangular matrices in \cite{Massey2024}. Yet, the present setting and the general approach to dealing with 
$\Lambda$-admissible subspaces differ from those considered in \cite{Massey2024}.

\begin{rem}[When to - and why - consider $\Lambda$-admissible subspaces]\label{rem cuando admis} Let $A\in\K^{n\times n}$ be a self-adjoint matrix and assume that we are interested in approximating its $h$ largest eigenvalues, $\lambda_1\geq \ldots\geq \lambda_h$. 
Assume further that $\lambda_h$ lies in a cluster of eigenvalues: ideally, 
there exist (enveloping) indexes $0\leq j<h< k$ such that the eigen-gaps $\lambda_j>\lambda_{j+1}$ and $\lambda_k>\lambda_{k+1}$ are significant, and $\delta:=\lambda_{j+1}-\lambda_k\approx 0$. 
In this case, $\lambda_{j+1}\geq \ldots\geq \lambda_k$ is a cluster of eigenvalues that includes $\lambda_h$. 
In these situations, we will see that $h$-dimensional $\Lambda$-admissible subspaces $\cS$ of $A$ (and then, by continuity, $h$-dimensional subspaces $\cT$ that are close to these) allow us to compute good approximations of $\la_1\geq \ldots\geq \la_h$ and (under further assumptions) nice low-rank approximations of $A$.

Furthermore, we will also show that some well-known iterative methods allow us to compute subspaces that are arbitrarily close to the class of $\Lambda$-admissible subspaces. 
This last fact indicates that there are advantages in considering the (typically infinite) class of $\Lambda$-admissible subspaces instead of the (generically uniquely determined) eigenspace $\cX_h$; indeed, the sensitivity of computing $\cX_h$ depends on $(\lambda_h-\lambda_{h+1})^{-1}\geq \delta^{-1}$ which is assumed to be large is the present setting, while the sensitivity of computing the class of $\Lambda$-admissible subspaces depends on $(\lambda_j-\lambda_{j+1})^{-1}+(\lambda_k-\lambda_{k+1})^{-1}$ which is assumed to be much smaller. 
We consider a detailed analysis of these facts in Section \ref{sec sensitivity}. We also present some numerical examples in Section \ref{sec num exa} showing the advantages of considering $\Lambda$-admissible subspaces in this setting.
\end{rem}

\begin{rem}[$\Lambda$-admissible subspaces and eigenvalue approximation]\label{rem sobre admis subs0} 
Consider Notation \ref{nota inicial}. 
It turns out that $\Lambda$-admissible subspaces of $A$ allow us to compute approximations of its eigenvalues, e.g. using the Rayleigh-Ritz method. Indeed,
consider the notation and assumptions from Remark \ref{rem cuando admis},
and further assume that $\cS\subset \K^n$ is an $h$-dimensional $\Lambda$-admissible subspace of $A$. 
Recall that in this case $\cX_j$ and $\cX_k$ are the unique dominant eigenspaces of $A$ with dimensions $j$ and $k$, respectively. Then, the Ritz values of $A$ corresponding to $\cS$ satisfy that 
$$
 \lambda_i(P_{\cS} AP_{\cS})=  \lambda_i(P_{\cS} AP_{\cS}|_{\cX_j})=\lambda_{i} \peso{for}  1\leq i\leq j  \py
$$
$$
\lambda_{i+k-h}\leq  \lambda_i(P_{\cS} AP_{\cS})=\lambda_{i-j}(P_{\cS} AP_{\cS}|_{\cX_k\ominus \cX_j}) \leq \lambda_i  \peso{for}  j+1\leq i\leq h \,.
$$
To prove the assertions above, notice that $P_\cS AP_\cS = P_\cS (P_{\cX_k}AP_{\cX_k})P_\cS$. Since the eigenvalues of the matrix $P_{\cX_k}AP_{\cX_k}\big|_{\cX_k}$ restricted to the subspace $\cX_k$ are $\lambda_1\geq\ldots\geq\lambda_k$, the interlacing inequalities \cite[Corollary III.1.5]{Bhatia} imply that  $\lambda_{i+k-h} =\lambda_{i+k-h}(P_{\cX_k}AP_{\cX_k})
\leq \lambda_i(P_{\cS} A P_{\cS})
\leq \lambda_{i}(P_{\cX_k}AP_{\cX_k})
= \lambda_{i}
$, for $1\leq i\leq h$, and the inequality for the first $j$ eigenvalues becomes an equality since $\cX_j\subseteq\cS$. Finally,  
$$0\leq \lambda_i-\lambda_i(P_{\cS} A P_{\cS})\leq \lambda_i-\lambda_{i+k-h}\leq \delta \peso{for}  j+1\leq i\leq h \,.$$
Hence, the smaller the spread of the eigenvalues in the cluster is, the more accurate the approximation of the first $h$ eigenvalues of $A$ by the Ritz values becomes.  
\end{rem}

\begin{rem}[$\Lambda$-admissible subspaces and low-rank approximations ]\label{rem sobre admis subs1} It turns out that, under some additional hypotheses, $\Lambda$-admissible subspaces also induce low-rank approximations of a self-adjoint matrix that have several nice properties. To see this, consider the notation and setting from Remark \ref{rem cuando admis}, and assume that $\lambda_k\geq 0$. In this case, the truncated eigendecomposition $A_h=P_{\cX_h}A P_{\cX_h}$ is the optimal positive semi-definite low-rank approximation of $A$, since $\lambda_h\geq \la_k\geq 0$ (see Remark \ref{rem dom eigenspa2}).
Let $\cS\subset \K^n$ be an $h$-dimensional $\Lambda$-admissible subspace of $A$.  Then, the low-rank approximation $P_{\cS} AP_{\cS}$ obtained by compressing $A$ onto $\cS$  is positive semi-definite and it satisfies the following:
\begin{equation}\label{eq dif norm opt aprox}
    \uniinv{A-A_h}\leq \uniinv{A-P_{\cS} AP_{\cS}}\leq \uniinv{A-A_h}+ \|\text{diag}(\underbrace{\delta,\ldots,\delta}_{k-j},0,\ldots,0)\|\,,
\end{equation}
where $\delta=\la_{j+1}-\la_k$ is the spread of the cluster and $\uniinv{\cdot}$ is a unitarily invariant norm. In particular, 
$\|A-P_{\cS} AP_{\cS}\|_2\leq \|A-A_h\|_2+\delta=\lambda_{h+1}+\delta$.
The proof of Eq. \eqref{eq dif norm opt aprox} is developed in Section \ref{appendix}.
\end{rem}

\begin{rem}[Stability of $\Lambda$-admissible subspaces under the action of $A$] \label{rem adm son casi inv}
Consider the notation and assumptions from Remark \ref{rem cuando admis}. It is well known that dominant eigenspaces $\cX_h$ of a Hermitian matrix $A\in\K^{n\times n}$ are invariant under the action of $A$ i.e. $P_{\cX_h} A-AP_{\cX_h}=0$. On the other hand,
 $\Lambda$-admissible subspaces are (typically) not invariant under the action of $A$; nevertheless, we show that they are {\it approximately invariant},  in the sense that when $\cS\subset \K^n$ is an $h$-dimensional $\Lambda$-admissible subspace of $A$, we have that  $\|P_\cS A - A P_\cS\|_2\leq \la_{j+1}-\la_k=\delta$ (which we assume is small).
To see this, let $A=X\Lambda X^*$ be an eigendecomposition of $A$
and consider an auxiliary matrix $\tilde{A}=X\tilde{\Lambda}X^*$ with
    $    \tilde{\Lambda}
    =     \text{diag} ( \lambda_1,\ldots,\lambda_j,\tilde{\lambda},\ldots,\tilde{\lambda},\lambda_{k+1},\ldots,\lambda_n)$
        where $\tilde{\lambda}=\frac{\lambda_{j+1}+\lambda_k}{2}$. Notice that 
    \[
    |\lambda(A-\tilde{A})|
    =
    (0,\ldots,0,
    |\lambda_{j+1}-\tilde{\lambda}|
    ,\ldots,
    |\lambda_k-\tilde{\lambda}|
    ,0,\ldots,0)^\downarrow
    \prec_w
    (\underbrace{\frac{\lambda_{j+1}-\lambda_k}{2},\ldots,\frac{\lambda_{j+1}-\lambda_{k}}{2}}_{k-j},0,\ldots,0)
    \]
    \noindent which implies that $\|A-\tilde{A}\|_2=\frac{\lambda_{j+1}-\lambda_k}{2}$. 
 Moreover, by construction, $\cS$ is $\tilde{A}$-invariant; so, in particular $
    P_\cS A - A P_\cS
    =
    P_\cS A - P_\cS \tilde{A} + \tilde{A}P_\cS - A P_\cS
    =
    P_\cS(A-\tilde{A}) - (A-\tilde{A})P_\cS
    $, which implies that 
    $$\|P_\cS A - A P_\cS\|_2\leq 2\|A-\tilde{A}\|_2=\lambda_{j+1}-\lambda_k=\delta\,.$$ The previous estimate also shows that the norm of the residual satisfies
    $$
    \|AP_{\cS}-P_\cS A P_{\cS}\|_2=\|(I-P_\cS)AP_{\cS}\|_2=
    \|(I-P_\cS)(AP_{\cS}-P_{\cS} A)\|_2\leq \delta\,.
    $$
    Thus, the smaller the spread $\delta$ is, the more stable under the action of $A$ the $\Lambda$-admissible subspace $\cS$ is (as the norm $\|P_\cS A - A P_\cS\|_2$ becomes smaller). In case that $\delta=0$, it is actually invariant.
\end{rem}

In the perfect cluster setting $\delta=\la_{j+1}-\la_k=0$ we have noted that the class of $\Lambda$-admissible subspaces coincides with the class of dominant eigenspaces of $A$ (see the comments after Definition \ref{defi sub adms}). In this special case, Remarks \ref{rem sobre admis subs0}, \ref{rem sobre admis subs1} and \ref{rem adm son casi inv} recover some known facts about the dominant eigenspaces of Hermitian matrices. Indeed, compare Remarks \ref{rem sobre admis subs0} and \ref{rem dom eigenspa2ymedio} or Remarks \ref{rem sobre admis subs1} and \ref{rem dom eigenspa2}.

These remarks show that (under some natural assumptions) $\Lambda$-admissible subspaces of a matrix $A$ are nice subspaces to tackle simultaneously eigenvalue approximations and low-rank  approximations of $A$, in the context of clustered eigenvalues; indeed, the estimates for the approximation of eigenvalues and for the approximation error 
$\|A-A_\ell\|$ imply that $\Lambda$-admissible subspaces induce low-rank approximations that fulfill the paradigm proposed in \cite[Section 2.2]{MM15} (and adopted in recent works \cite{D19, Sai19}) for qualitatively good low-rank approximations. 
Continuity arguments imply that low-rank approximations $P_\cT A P_\cT$ induced by $h$-dimensional subspaces $\cT$ that are close to some $\Lambda$-admissible subspace (in the terms discussed in Remark \ref{rem sobre admis subs1}) are also (qualitatively) nice.
To get some first quantitative estimates, consider the notation in Remark \ref{rem cuando admis}. By \cite[Theorem 3.3]{MSZ1} we get
$$\max\{|\la_i(P_\cS A P_\cS)-\la_i(P_\cT A P_\cT)|\, : \ 1\leq i\leq h\}\leq \tan(\theta_{\max}(\cS,\cT))(2\,\delta+ \sin(\theta_{\max}(\cS,\cT)) \, \|A\|_2)\,,$$
where we used that 
$\|P_\cT(I-P_\cS)AP_\cS\|_2\leq \sin(\theta_{\max}(\cS,\cT))\,
\|AP_\cS-P_\cS A P_\cS\|_2\leq \sin(\theta_{\max}(\cS,\cT))\, \delta$ (by Remark \ref{rem adm son casi inv}), and 
$ \|P_\cS(I-P_\cT)AP_\cT\|\leq \sin(\theta_{\max}(\cS,\cT))\,\|(I-P_\cT)A\|_2$
together with $\|(I-P_\cT)A\|_2\leq \|(I-P_\cS)A\|_2+\sin(\theta_{\max}(\cS,\cT)) \|A\|_2$ and $\|(I-P_\cS)A\|_2= \|(I-P_\cS)(A-P_\cS A P_\cS)\|_2\leq \delta$.
Hence, by Remark \ref{rem sobre admis subs0} we get that
$$0\leq \la_i(A)-\la_i(P_\cT A P_\cT)\leq \tan(\theta_{\max}(\cS,\cT))(2\,\delta+ \sin(\theta_{\max}(\cS,\cT)) \, \|A\|_2) \peso{for} 1\leq i\leq j;$$
$$
0\leq \la_i(A)-\la_i(P_\cT A P_\cT)\leq \delta+ \tan(\theta_{\max}(\cS,\cT))(2\,\delta+ \sin(\theta_{\max}(\cS,\cT)) \, \|A\|_2) \peso{for} j+1\leq i\leq h\,.
$$
Assume further that $\la_k\geq 0$; it is straightforward to check that $\|A-P_\cT A P_\cT\|_2\leq \|A-P_\cS A P_\cS\|_2+\|P_\cS A P_\cS-P_\cT A P_\cT\|_2$ and 
$\|P_\cS A P_\cS-P_\cT A P_\cT\|_2\leq 2 \sin(\theta_{\max}(\cS,\cT))\ \|A\|_2$; thus, by Remark \ref{rem sobre admis subs1}, 
$$
\|A-P_\cT A P_\cT\|_2\leq \|A-A_h\|_2+\delta + 2 \sin(\theta_{\max}(\cS,\cT))\ \|A\|_2\,,
$$ where $A_h=P_{\cX_h}A P_{\cX_h}$ is the truncated eigendecomposition (see Remark \ref{rem dom eigenspa2}). We will consider a more detailed quantitative analysis of the quality of the low-rank approximation $P_\cT A P_\cT$
elsewhere.
 
\section{Main results}\label{sec main results}

In this Section, we present our main results. In Section \ref{sec 41}
we obtain an upper bound for the distance between a generic $h$-dimensional 
subspace $\cT\subset \K^n$ and the class of $h$-dimensional $\Lambda$-admissible subspaces of $A$. This result plays a key role in our work. 
In Section \ref{subsec const approx admis}, we show that some of the 
well-known iterative algorithms for computing approximations of dominant 
eigenspaces also produce subspaces that become close to 
$\Lambda$-admissible subspaces; numerical examples show that 
the speed at which these iterative subspaces become close to 
$\Lambda$-admissible subspaces is much faster than the 
speed at which these iterative subspaces become close to dominant eigenspaces
when $\la_h$ lies in a cluster of eigenvalues. In Section \ref{sec RR} we 
obtain upper bounds for the distance between subspaces constructed using the 
Rayleigh-Ritz method and the class of $\Lambda$-admissible subspaces. Finally, in Section \ref{sec sensitivity}, we obtain upper bounds for the sensitivity of the computation of $\Lambda$-admissible subspaces. The proofs of these results are presented in Section \ref{appendix}.

Next, we include some of the notation used throughout this section.
\begin{nota}\label{nota sec 4}
    Let $\K=\R$ or $\K=\C$. We consider: 

\smallskip

\noindent 1. A self-adjoint or Hermitian matrix $A\in\K^{n\times n}$ with an eigendecomposition given by $A=X\,\Lambda X^*$. The eigenvalues of $A$ are given by $\lambda(A)=(\lambda_1,\ldots,\lambda_n)\in (\R^n)\da$, counting multiplicities and arranged non-increasingly and we set $\lambda_0:=\infty$. The columns of $X$ are denoted by $x_1,\ldots, x_n\in \K^n$, respectively. We further consider $\cX_l=\overline{\{x_1,\ldots,x_l\}}$, for $1\leq l\leq n$ and $\cX_0=\{0\}$\,. 

\medskip

\noindent 2. For a target dimension $1\leq h\ll n$, we consider (enveloping) indices $0\leq j<h< k\leq \text{rank}(A)$ such that $\lambda_j>\lambda_{j+1}$ and $\lambda_k>\lambda_{k+1}$. Notice that $\cX_j$ and $\cX_k$ do not depend on the choice of $X$ in the present setting. We use the indices $j$ and $k$ for the $\Lambda$-admissible subspaces of $A$ and set 
    \begin{align}\label{eqcuac conj admis1}
    \Lambda\text{-adm}_h(A)&:=\{\, \cS :\ \cS
    \text{ is an $h$-dimensional $\Lambda$-admissible space of $A$ \,\}}
    \\
    &\nonumber =
    \{\, \cS\in\cG_{n,\,h} : \cX_j\subseteq\cS\subseteq\cX_k\}\,.
    \end{align} 
    \end{nota}

\subsection{A bound for the distance to the class of \texorpdfstring{$\Lambda$}{Λ}-admissible subspaces}\label{sec 41}

Consider Notation \ref{nota sec 4}.
If $\cT\subset \K^n$ is an $h$-dimensional subspace such that $\uniinv{\sin \Theta(\cS,\cT)}$ is small for {\it some} $h$-dimensional $\Lambda$-admissible space of $A$ then Remark \ref{rem sobre admis subs0} and continuity arguments show that  $\cT$ can be used to derive approximate eigenvalues that are close to the exact eigenvalues $\la_1,\ldots,\la_h$ of $A$. 
This fact induces us to consider upper bounds for the distance 
\begin{equation}\label{eqcuac dis a admis1}
d(\Lambda\text{-adm}_h(A),\,\cT)=\inf\{\,\uniinv{\sin \Theta(\cS,\cT)}: \ \cS \in \Lambda\text{-adm}_h(A)
\}\,,\end{equation}
where $\uniinv{\cdot}$ is a u.i.n. Notice that since $h<k$, obtaining upper bounds for $d(\Lambda\text{-adm}_h(A),\,\cT)$ is a non-trivial problem, since we can find $\cS,\,\cS'\in \Lambda\text{-adm}_h(A)$
such that $\theta_h(\cS,\cS')=\pi/2$.

We point out  that  $\cS\in\Lambda\text{-adm}_h(A)$ if and only if $\cS=\cX_j\oplus\cD$ where $\cD\subseteq\cX_k\ominus\cX_j:=\cX_k\cap\cX_j^\perp$ is such that $\dim\cD=h-j$. In particular, $\Lambda\text{-adm}_h(A)$ is a compact subset of $\cG_{n,h}$ (endowed with the topology associated with the metric $\uniinv{\sin\Theta(\cT_1,\,\cT_2)}$ for $\cT_1,\,\cT_2\in\cG_{n,h}$ and any u.i.n.) and thus the infimum in Eq. \eqref{eqcuac dis a admis1} is actually a minimum.

The next result describes a useful upper bound for the distance in Eq. \eqref{eqcuac dis a admis1} in a generic case and plays a key role in the rest of our work (see Section \ref{appendix}).

\begin{teo}\label{teo dist a adm2} Consider Notation \ref{nota sec 4}.
Let $\cT\subset\K^n$ be such that 
$\Theta(\cX_k,\cT)<\frac{\pi}{2}\, I_{ \min\{t,k\}}$.
Then, for every unitarily invariant norm $\uniinv{\cdot}$ we have that
$$
d(\Lambda\text{-adm}_h(A),\,\cT)\leq \uniinv{\sin \Theta(\cX_j,\cT)}+ \uniinv{\sin \Theta(\cX_k,\cT)}\,.$$
\end{teo}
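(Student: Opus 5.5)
The plan is to construct an explicit $\cS\in\Lambda\text{-adm}_h(A)$ adapted to $\cT$, and then bound $\uniinv{\sin\Theta(\cS,\cT)}$ by splitting a projection into two orthogonal pieces and using the triangle inequality. I take $t=\dim\cT\geq h$, which is the setting in which the distance in Eq. \eqref{eqcuac dis a admis1} is defined. The two regimes $t\geq k$ and $h\leq t<k$ are treated separately, because $\Theta(\cX_k,\cT)$ is computed from the smaller-dimensional of the two subspaces.

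\emph{Case $t\geq k$.} Here $\uniinv{\sin\Theta(\cX_k,\cT)}=\uniinv{(I-P_\cT)P_{\cX_k}}$. For any $\cS\in\Lambda\text{-adm}_h(A)$ we have $\cS\subseteq\cX_k$, so by Eq. \eqref{eq sobre sen ang princ1} and the monotonicity of principal angles \eqref{eq monotonia de ang2},
$$\uniinv{\sin\Theta(\cS,\cT)}=\uniinv{(I-P_\cT)P_\cS}\leq\uniinv{(I-P_\cT)P_{\cX_k}}=\uniinv{\sin\Theta(\cX_k,\cT)}.$$
This case is therefore immediate.

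\emph{Case $h\leq t<k$.} Now $\uniinv{\sin\Theta(\cX_k,\cT)}=\uniinv{(I-P_{\cX_k})P_\cT}$, and the hypothesis says that $P_{\cX_k}$ is injective on $\cT$. By monotonicity, $\Theta(\cX_j,\cT)<\frac{\pi}{2}I_j$ as well, so $\cT_j:=P_\cT(\cX_j)$ has dimension $j$.

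The construction proceeds as follows. Observe that $\cT\ominus\cT_j=\cT\cap\cX_j^\perp$. Choose any $\cE\subseteq\cT\cap\cX_j^\perp$ with $\dim\cE=h-j$, and set $\cT_h=\cT_j\oplus\cE\subseteq\cT$. Define $\cD:=P_{\cX_k}(\cE)$. Since $\cE\perp\cX_j$ and $\cX_j\subseteq\cX_k$ (so $P_{\cX_j}P_{\cX_k}=P_{\cX_j}$), we get $\cD\subseteq\cX_k\ominus\cX_j$. Injectivity of $P_{\cX_k}$ on $\cT$ gives $\dim\cD=h-j$. Hence $\cS:=\cX_j\oplus\cD\in\Lambda\text{-adm}_h(A)$.

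The estimate then runs as follows. Since $\cT_h\subseteq\cT$ and $\dim\cS=\dim\cT_h=h$, monotonicity together with the symmetry of principal angles for equal dimensions give
$$\uniinv{\sin\Theta(\cS,\cT)}\leq\uniinv{\sin\Theta(\cS,\cT_h)}=\uniinv{(I-P_\cS)P_{\cT_h}}\leq \uniinv{(I-P_\cS)P_{\cT_j}}+\uniinv{(I-P_\cS)P_{\cE}}.$$
For the first term, $\cX_j\subseteq\cS$ implies $\uniinv{(I-P_\cS)P_{\cT_j}}\leq\uniinv{(I-P_{\cX_j})P_{\cT_j}}=\uniinv{\sin\Theta(\cX_j,\cT_j)}$. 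This equals $\uniinv{\sin\Theta(\cX_j,\cT)}$, because $\cT_j=P_\cT(\cX_j)$ realizes all the principal angles between $\cX_j$ and $\cT$. For the second term, $P_{\cX_k}\cE=\cD\subseteq\cS$, so $(I-P_\cS)P_\cE=(I-P_\cS)(I-P_{\cX_k})P_\cE$. Its norm is then at most $\uniinv{(I-P_{\cX_k})P_\cE}\leq\uniinv{(I-P_{\cX_k})P_\cT}=\uniinv{\sin\Theta(\cX_k,\cT)}$. Adding the two bounds gives the claim.

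The main obstacle I expect is the bookkeeping with principal angles between subspaces of different dimensions. Specifically, three facts need care: that $\sin\Theta(\cX_j,\cT)$ coincides with $\sin\Theta(\cX_j,P_\cT\cX_j)$; that $\cT\ominus P_\cT(\cX_j)=\cT\cap\cX_j^\perp$; and that the norms of the padded operators $(I-P_\cS)P_{\cT_h}$ are compared correctly via \eqref{eq sobre sen ang princ1}. I would verify the first two via the SVD of $P_\cT P_{\cX_j}$ (principal vectors), and the third via Eq. \eqref{eq sobre sen ang princ1}.
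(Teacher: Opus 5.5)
Your case $t\geq k$ is correct, and it even gives the sharper bound $d(\Lambda\text{-adm}_h(A),\cT)\leq\uniinv{\sin\Theta(\cX_k,\cT)}$. In the case $h\leq t<k$, however, the step ``by monotonicity, $\Theta(\cX_j,\cT)<\frac{\pi}{2}I_j$'' is false. For $t<k$ the hypothesis only says $\cT\cap\cX_k^\perp=\{0\}$. The monotonicity in Eq. \eqref{eq monotonia de ang} lets you shrink the smaller subspace or enlarge the larger one, but passing from $\cX_k$ to $\cX_j$ shrinks the larger one.

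Here is a counterexample. Take $A=\text{diag}(3,2,2,0)$, $j=1$, $h=2$, $k=3$ and $\cT=\overline{\{x_2,x_3\}}$. Then $\cT\subseteq\cX_3$, so the hypothesis holds, yet $\cX_1\perp\cT$ and $P_\cT(\cX_1)=\{0\}$. Whenever $\cX_j\cap\cT^\perp\neq\{0\}$, your $\cT_h=\cT_j\oplus\mathcal{E}$ has dimension less than $h$, and two steps fail:
\begin{enumerate}
\item the equality $\uniinv{\sin\Theta(\cS,\cT_h)}=\uniinv{(I-P_\cS)P_{\cT_h}}$, which needs $\dim\cT_h=\dim\cS$;
\item the identity $\uniinv{\sin\Theta(\cX_j,\cT_j)}=\uniinv{\sin\Theta(\cX_j,\cT)}$, since right angles are not seen inside $P_\cT(\cX_j)$.
\end{enumerate}
In the example, $\mathcal{E}=\overline{\{x_2\}}$ gives $\cS=\overline{\{x_1,x_2\}}\supseteq\cT_h$. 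Your chain would then assert $\uniinv{\sin\Theta(\cS,\cT)}\leq 0$, although $\sin\theta_{\max}(\cS,\cT)=1$. Such $\cT$ are within the scope of the theorem. For $\|\cdot\|_2$ the bound is trivial there, since the right-hand side is at least $1$, but for a general u.i.n. it is not.

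The repair is to split $P_\cS$ instead of $P_{\cT_h}$, which makes $\cT_j$ unnecessary. Your $\mathcal{E}$, $\cD=P_{\cX_k}(\mathcal{E})$ and $\cS=\cX_j\oplus\cD$ are well defined without any condition on $\cX_j\cap\cT^\perp$. Indeed, $\dim(\cT\cap\cX_j^\perp)\geq t-j\geq h-j$, and injectivity of $P_{\cX_k}$ on $\cT$ gives $\dim\cD=h-j$. Then $\uniinv{\sin\Theta(\cS,\cT)}=\uniinv{(I-P_\cT)P_\cS}\leq\uniinv{(I-P_\cT)P_{\cX_j}}+\uniinv{(I-P_\cT)P_\cD}$.
\begin{enumerate}
\item The first term is exactly $\uniinv{\sin\Theta(\cX_j,\cT)}$, because $j\leq t$.
\item For the second, $\mathcal{E}\subseteq\cT$ and $\dim\cD=\dim\mathcal{E}$ give $\uniinv{(I-P_\cT)P_\cD}\leq\uniinv{(I-P_{\mathcal{E}})P_\cD}=\uniinv{(I-P_\cD)P_{\mathcal{E}}}$. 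Your own observation $(I-P_\cD)P_{\mathcal{E}}=(I-P_\cD)(I-P_{\cX_k})P_{\mathcal{E}}$ then bounds this by $\uniinv{(I-P_{\cX_k})P_\cT}=\uniinv{\sin\Theta(\cX_k,\cT)}$.
\end{enumerate}

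Once repaired, your argument is a more elementary route than the paper's. The paper projects $\cT$ onto $\cX_k$, applies Proposition \ref{pro dist a adm 2} to $P_{\cX_k}(\cT)$, and then needs Lemma \ref{lema arrimar subespacio} and the triangle inequality for $\uniinv{\sin\Theta(\cdot,\cdot)}$ on the Grassmannian, which is nontrivial for a general u.i.n. Like the repaired version, the paper estimates $(I-P_\cT)P_\cS$ by splitting on the $\cS$ side, so it never needs $\cX_j\cap\cT^\perp=\{0\}$.
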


\begin{proof}
    See Section \ref{Sec6.1}
\end{proof}

Consider the notation in Theorem \ref{teo dist a adm2}. This result will allow us to address the problem of finding upper bounds for the distance to the class $\Lambda\text{-adm}_h(A)$ by splitting the problem and searching for upper bounds for the distance to the (under the current hypothesis) uniquely determined subspaces $\cX_j$ and $\cX_k$. We point out that the problem of finding an upper bound for $\uniinv{\sin \Theta(\cX_j,\cT)}$, e.g. for subspaces $\cT$ obtained from iterative methods, has been widely studied \cite{AZMS,D19,Sai19,WWZ} (recall that $\dim(\cT)\geq h\geq j$) and thus we can employ ideas and tools previously developed for this case. On the other hand, the study of $\uniinv{\sin \Theta(\cX_k,\cT)}$ is more delicate, since depending on the situation, we may find ourselves in the case where $\dim(\cT)\geq k$ (where once again we can refer to the literature) or in the case that  $\dim(\cT)< k$ (where the mentioned works do not apply).

\begin{rem}\label{rem cota inferior}
    Consider the notation in Theorem \ref{teo dist a adm2} and assume further that $\dim(\cT)=h$.
    Given $\cS'\in\Lambda\text{-adm}_h(A)$,
    the monotonicity of the principal angles implies that for every u.i.n. $\uniinv{\cdot}$ we have 
    \[
        \uniinv{\sin\Theta(\cX_j,\cT)}
        ,\,
        \uniinv{\sin\Theta(\cX_k,\cT)}
        \leq
        \uniinv{\sin\Theta(\cS',\cT)}\,.
    \]
    The previous fact together with Theorem \ref{teo dist a adm2} show that $d(\Lambda\text{-adm}_h(A),\,\cT)$ is small (i.e., $\cT$ is close to some element of $\Lambda\text{-adm}_h(A)$) if and only if $\cT$ is close 
    to both $\cX_j$ and $\cX_k$. Moreover,  if $\cT\subseteq\cX_k$, then $\uniinv{\sin\Theta(\cX_j,\cT)} = d(\Lambda\text{-adm}_h(A),\,\cT)$ and if $\cX_j\subseteq\cT$, then $\uniinv{\sin\Theta(\cX_k,\cT)}= d(\Lambda\text{-adm}_h(A),\,\cT)$. 
\end{rem}

\subsection{Computable approximations of \texorpdfstring{$\Lambda$}{Λ}-admissible subspaces}\label{subsec const approx admis}

As pointed out at the beginning of Section \ref{sec 41}, if $\cT\subset \K^n$ is an $h$-dimensional subspace such that $\uniinv{\sin \Theta(\cS,\cT)}$ is small for some $\cS\in\Lambda\text{-adm}_h(A)$, then   $\cT$ can be used to derive approximate eigenvalues that are close to the exact eigenvalues $\la_1,\ldots,\la_h$ of $A$. Hence, it is important to
obtain algorithms that can compute (or construct) in an effective way such subspaces $\cT$: the following result
is a first step towards the construction of such subspaces (also see Remark \ref{rem se aproxima a admis} below).

\begin{teo}\label{teo se aprox a admis}
Consider Notation \ref{nota sec 4}.
 Let $\cW\subset \K^n$ be such that $\dim\cW=r$, with $h\leq r< k$. Let $0\leq p_1,\, p_2$ be such that $p_1+ p_2\leq r-h$ and assume that
\begin{equation}\label{eq cond gen para w}
\cW^\perp \cap 
\overline{\{x_1,\ldots,x_{j+p_1},x_{k+1},\ldots,x_{k+p_2}\}}
=
\{0\}
\py
\cW\cap
\overline{\{x_{1},\ldots,x_{k}\}}^\perp =\{0\}\,.
\end{equation} 
Then, there exists $\cH_p\subset \cW$ such that 
$\dim\cH_p=r-(p_1+p_2)\geq h$, $\cH_p^\perp\cap\cX_j=\{0\}$, $\cH_p\cap\cX_k^\perp=\{0\}$
 such that 
for every polynomial $\phi\in \K[x]$ with $\phi(\Lambda_k)$ invertible, and
every u.i.n. $\uniinv{\cdot}$ 
\begin{eqnarray*}
d(\Lambda\text{-adm}_h(A),\phi(A)(\cW)) &\leq& \|\phi(\Lambda_j)^{-1}\|_2\ \|\phi(\Lambda_{j+p_1,\perp})\|_2
        \;\uniinv{ \tan(\Theta(\cX_j,\, \cH_p))}+ \\ & & \|\phi(\Lambda_k)^{-1}\|_2\ \|\phi(\Lambda_{k+p_2,\perp})\|_2
        \;\uniinv{ \tan(\Theta(\cX_k,\, \cH_p))}\,.
\end{eqnarray*}
If $j=0$ (respectively if $k=\rk(A)$ and $\phi(0)=0$) then the first (respectively the second) 
term in the right-hand side of the inequality above should be omitted.
\end{teo}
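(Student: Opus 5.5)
The plan is to reduce the statement to Theorem \ref{teo dist a adm2} and then bound each of the two resulting terms by a standard subspace-iteration (power-method) argument, in the spirit of the deterministic bounds in \cite{D19,Sai19}.

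Since $\cH_p\subset\cW$, we have $\phi(A)(\cH_p)\subset\phi(A)(\cW)$. The monotonicity of principal angles (Eq. \eqref{eq monotonia de ang2}), applied with the larger second argument, then gives
$$\uniinv{\sin\Theta(\cX_\ell,\phi(A)(\cW))}\leq\uniinv{\sin\Theta(\cX_\ell,\phi(A)(\cH_p))}$$
for $\ell=j,k$. It therefore suffices to check the hypothesis of Theorem \ref{teo dist a adm2} for $\cT=\phi(A)(\cW)$, which means showing that $\cX_k\cap\phi(A)(\cW)^\perp=\{0\}$, and to bound each of the two distances by the corresponding $\tan$ term. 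Note that $\sin\leq\tan$ will be used at the end.

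\textbf{Construction of $\cH_p$.} Write $\cY=\overline{\{x_{j+1},\ldots,x_{j+p_1},x_{k+1},\ldots,x_{k+p_2}\}}$ and $\cV=\cX_j\oplus\cY$, which has dimension $j+p_1+p_2$. The first condition in \eqref{eq cond gen para w} says that $P_\cV|_\cW:\cW\to\cV$ is onto, so $\cW$ contains a subspace of dimension $p_1+p_2$ mapped bijectively onto $\cY$ (take the preimage under $P_\cV|_\cW$ of $\cY$, intersected with a complement of the kernel). I would define $\cH_p$ as the $(r-p_1-p_2)$-dimensional subspace of $\cW$ that is the preimage of $\cX_j$ under $P_\cV|_\cW$, with the $\cY$-component removed; concretely, $\cH_p=\{w\in\cW:\ P_\cY w=0\}$.

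Because $P_\cV|_\cW$ is onto, $P_\cY|_\cW$ is onto $\cY$, so $\dim\cH_p=r-p_1-p_2$. In addition, $P_{\cX_j}(\cH_p)=\cX_j$, which follows from surjectivity onto $\cV$. This is exactly the condition $\cH_p^\perp\cap\cX_j=\{0\}$. The condition $\cH_p\cap\cX_k^\perp=\{0\}$ is inherited from $\cW$.

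\textbf{Bounding the $\cX_j$ term.} Choose $H\in\K^{n\times h'}$ with orthonormal columns spanning $\cH_p$, where $h'=\dim\cH_p$, and write $X^*H=(H_1;H_2)$ with $H_1=X_j^*H$ of full row rank. Then $X^*\phi(A)H=(\phi(\Lambda_j)H_1;\ \phi(\Lambda_{j,\perp})H_2)$. Using the pseudo-inverse, take the subspace $\phi(A)H H_1^\dagger$. Its $\cX_j$-block is $\phi(\Lambda_j)$ and its complementary block is $\phi(\Lambda_{j,\perp})H_2H_1^\dagger$.

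The key point is that $P_\cY H=0$ kills the coordinates $j+1,\ldots,j+p_1$ and $k+1,\ldots,k+p_2$ of $H_2$. Hence $\phi(\Lambda_{j,\perp})H_2$ only involves $\phi(\Lambda_{j+p_1,\perp})$ (here I use the convention that this block omits the indices of $\cY$). The standard estimate $\tan\Theta\leq\|\text{complement block}\cdot(\text{top block})^{-1}\|$ then yields
$$\uniinv{\tan\Theta(\cX_j,\phi(A)\cH_p)}\leq\|\phi(\Lambda_j)^{-1}\|_2\,\|\phi(\Lambda_{j+p_1,\perp})\|_2\,\uniinv{H_2H_1^\dagger},$$
with $\uniinv{H_2H_1^\dagger}=\uniinv{\tan\Theta(\cX_j,\cH_p)}$. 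When $j$ has some $\phi(\lambda_i)=0$ this needs invertibility; the statement implicitly assumes it through $\phi(\Lambda_k)$ being invertible, since $\Lambda_j$ is a sub-block of $\Lambda_k$.

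\textbf{Bounding the $\cX_k$ term, the main obstacle.} Here $\dim\cH_p<k$, so $\phi(A)\cH_p$ cannot contain a copy of $\cX_k$. Instead I would estimate $\sin\Theta(\cX_k,\cdot)$ with the roles reversed: I measure how far $\phi(A)\cH_p$ sticks out of $\cX_k$. Writing $G_1=X_k^*H$, which has full column rank by $\cH_p\cap\cX_k^\perp=\{0\}$, I use the basis $\phi(A)HG_1^\dagger\phi(\Lambda_k)^{-1}$ restricted to the range of $G_1$. Its $\cX_k$-block is a partial isometry and its tail is $\phi(\Lambda_{k,\perp})G_2G_1^\dagger\phi(\Lambda_k)^{-1}$. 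The tail coordinates $k+1,\ldots,k+p_2$ vanish again by construction, which gives the bound with $\|\phi(\Lambda_{k+p_2,\perp})\|_2\|\phi(\Lambda_k)^{-1}\|_2\uniinv{\tan\Theta(\cX_k,\cH_p)}$. The same computation shows that $\phi(A)\cH_p\cap\cX_k^\perp=\{0\}$, which is the hypothesis needed in Theorem \ref{teo dist a adm2}.

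The delicate points are making $\tan\Theta(\cX_k,\cH_p)$ correspond exactly to $\uniinv{G_2G_1^\dagger}$ when $\dim\cH_p<k$, and placing $\phi(\Lambda_k)^{-1}$ on the correct side. Finally, in the case $k=\rk(A)$ with $\phi(0)=0$, the tail vanishes and the term drops; in the case $j=0$, $\cX_0=\{0\}$ and the first term drops.
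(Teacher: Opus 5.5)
\textbf{The gap is in your opening reduction.} You apply Theorem \ref{teo dist a adm2} to $\cT=\phi(A)(\cW)$. You then claim that monotonicity gives $\uniinv{\sin\Theta(\cX_\ell,\phi(A)(\cW))}\leq\uniinv{\sin\Theta(\cX_\ell,\phi(A)(\cH_p))}$ for $\ell=j,k$.

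For $\ell=j$ this is correct: $\dim\cX_j=j<\dim\phi(A)(\cH_p)$, and enlarging the larger argument can only shrink the angles.

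For $\ell=k$ it is backwards. Here $\dim\phi(A)(\cW)\leq r<k$, so $\sin\Theta(\cX_k,\cT)$ is given by the singular values of $(I-P_{\cX_k})P_\cT$. Since $(I-P_{\cX_k})P_{\cT_1}=(I-P_{\cX_k})P_{\cT_2}P_{\cT_1}$ for $\cT_1\subset\cT_2$, these singular values can only grow as $\cT$ grows.

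This is not a technicality. Take $j=1$, $h=2$, $k=4$, $r=3$, $p_1=0$, $p_2=1$, $\phi(x)=x$ with $A$ positive definite, and $\cW=\mathrm{span}\{x_1,x_2,x_3+x_5\}$. The hypotheses in \eqref{eq cond gen para w} hold and $\cH_p=\mathrm{span}\{x_1,x_2\}$, so both tangent terms vanish. Yet $\phi(A)(\cW)$ contains $\la_3x_3+\la_5x_5\notin\cX_4$, so $\uniinv{\sin\Theta(\cX_4,\phi(A)(\cW))}>0$.

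Hence any argument that applies Theorem \ref{teo dist a adm2} directly to $\phi(A)(\cW)$ throws away exactly the $p_2$-improvement that $\cH_p$ was built to capture. The $\cX_k$-term of $\phi(A)(\cW)$ can only be controlled with $\|\phi(\Lambda_{k,\perp})\|_2$.

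The same orientation slip appears in the hypothesis you propose to verify. The condition $\cX_k\cap\phi(A)(\cW)^\perp=\{0\}$ can never hold, because $\dim\cX_k>\dim\phi(A)(\cW)$. The condition Theorem \ref{teo dist a adm2} actually requires is $\phi(A)(\cW)\cap\cX_k^\perp=\{0\}$.

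\textbf{The repair is to swap the order of the two steps; this is exactly the paper's route.} First apply Theorem \ref{teo dist a adm2} to $\cT=\phi(A)(\cH_p)$. Since $X_k^*\phi(A)H=\phi(\Lambda_k)X_k^*H$ has full column rank, $\dim\phi(A)(\cH_p)=\dim\cH_p\geq h$ and $\phi(A)(\cH_p)\cap\cX_k^\perp=\{0\}$, as you observe in your last step. This yields $\cS\in\Lambda\text{-adm}_h(A)$ with $\uniinv{\sin\Theta(\cS,\phi(A)(\cH_p))}$ bounded by the two sine terms for $\phi(A)(\cH_p)$. Your tangent estimates together with $\sin\leq\tan$ control those terms.

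Only then pass to $\phi(A)(\cW)$, using monotonicity in the second argument of $\Theta(\cS,\cdot)$. This is legitimate because $\dim\cS=h\leq\dim\phi(A)(\cH_p)\leq\dim\phi(A)(\cW)$.

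With this reordering, the rest of your proposal agrees with Lemmas \ref{lema H_p} and \ref{lema H_p 2} and is sound. That includes the choice $\cH_p=\cW\cap\cY^\perp$ and the two pseudo-inverse tangent estimates. Your surjectivity argument for $\cH_p^\perp\cap\cX_j=\{0\}$ is a tidy alternative to the paper's dimension count.
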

\begin{proof} 
See Section \ref{sec6.2}.
\end{proof}

\noindent Consider the notation of Theorem \ref{teo se aprox a admis}. Notice that the assumptions in Eq. \eqref{eq cond gen para w} are generic conditions. Also, the conditions  $\cH_p^\perp\cap\cX_j=\{0\}$, $\cH_p\cap\cX_k^\perp=\{0\}$ imply that 
$\tan(\Theta(\cX_j,\, \cH_p))$ and $\tan(\Theta(\cX_k,\, \cH_p))$ are well defined. We also point out that since $j\leq j+p_1\leq r+j-h< k$ we have $\|\Lambda_{j+p_1,\perp}\|_2=\lambda_{j+p_1+1}\in [\lambda_k,\,\lambda_{j+1}]$. Thus, in the context described in Remark \ref{rem cuando admis}, we have that $\lambda_{j+p_1}\approx\lambda_j$. One should keep this in mind while reading the following remarks.

\begin{rem}\label{rem se aproxima a admis}
Consider the notation from Theorem \ref{teo se aprox a admis} and assume further that $A$ is a positive semi-definite matrix. 
We let  $\phi(x)=x^{q}$ for some $q\geq 1$. In this case $\phi(A)=A^q$; hence, 
if we let $\cW\subset \K^n$ then 
$\phi(A)(\cW)=A^q(\cW)$ is the $q$-th iteration of the Subspace Iteration Method (SIM).

In this case, we have 
$\|\Lambda_j^{-1}\|_2=\lambda_j^{-1}$, $\|\Lambda_{j+p_1,\perp}\|_2=\lambda_{j+p_1+1}$, $\|\Lambda_k^{-1}\|_2=\lambda_k^{-1}$
 and $\|\Lambda_{k+p_2,\perp}\|_2=\lambda_{k+p_2+1}$. 
By Theorem \ref{teo se aprox a admis}, there exist  $\cH_p\subset \cV$ and 
$\mathcal S\in\Lambda\text{-adm}_h(A)$ such that 
\begin{equation}  \label{eq caso part sim1}
\|\sin \Theta(\mathcal S\coma A^q(\cW))\|_2\leq \left( \frac{\la_{j+p_1+1}}{\la_j}\right)^{q}
\;\| \tan \Theta(\cX_j,\, \cH_p)\|_2
+ \left( \frac{\la_{k+p_2+1}}{\la_k}\right)^{q}
\;\| \tan \Theta(\cX_k,\, \cW)\|_2\,,
\end{equation}
\noindent where we have chosen $\|\cdot\|=\|\cdot\|_2$ to be the spectral norm. We remark that our analysis takes advantage of the oversampling parameter $r-h$; indeed, 
notice that the decay of the upper bound in 
Eq. \eqref{eq caso part sim1} is related to the quotients
$\frac{\la_{j+p_1+1}}{\la_j}<1$ and
$\frac{\la_{k+p_2+1}}{\la_k}<1$, 
making use of the decay of the eigenvalue list, rather than merely of the eigen-gaps
$\la_j>\la_{j+1}$ and $\la_k>\la_{k+1}$ corresponding to consecutive indices.
 Notice that the upper bound in Eq. \eqref{eq caso part sim1} becomes arbitrarily small for large enough $q\geq 0$. On the other hand, we remark the fact that the subspace $\mathcal S$ depends on $q$; hence, the previous analysis is not a convergence analysis but rather a proximity analysis between
the subspaces $A^q(\cW)$ and the class $\Lambda\text{-adm}_h(A)$ of $h$-dimensional $\Lambda$-admissible subspaces of $A$.
\end{rem}

\begin{rem}\label{rem aplic Krylov}Consider the notation from Theorem \ref{teo se aprox a admis}. For $1\leq q$ we can consider the Krylov subspace $K_q(A,\cW):=\cW+A(\cW)+\ldots+A^q(\cW)\subset \K^n$.
In this case, if $\phi(x)\in\K[x]$ is any polynomial with degree at most $q$ then $\phi(A)(\cW)\subset K_q(A,\cW)$. If we further assume that  $\phi(\Lambda_k)$ is invertible then, by the monotonicity of principal angles and Theorem \ref{teo se aprox a admis} we get that 
\begin{eqnarray*}
d(\Lambda\text{-adm}_h(A),K_q(A,\cW)) &\leq& \|\phi(\Lambda_j)^{-1}\|_2\ \|\phi(\Lambda_{j+p_1,\perp})\|_2
        \;\uniinv{ \tan(\Theta(\cX_j,\, \cH_p))}+ \\ & & \|\phi(\Lambda_k)^{-1}\|_2\ \|\phi(\Lambda_{k+p_2,\perp})\|_2
        \;\uniinv{ \tan(\Theta(\cX_k,\, \cW))}\,.
\end{eqnarray*}
It is well known that Chebyshev's polynomials play a key role in constructing convenient polynomials $\phi(x)$ that induce relevant upper bounds for the angles above \cite{AZMS,D19,MM15}.
\end{rem}

As a final comment, we mention the recent works \cite{ZKN23,ZKN25}, where the authors obtain cluster robust bounds for Ritz vectors and Ritz values of self-adjoint matrices. In these works the authors consider an abstract block iteration, similar to the transformation $\cW\mapsto \phi(A)(\cW)$ considered in Theorem \ref{teo se aprox a admis} above.
Nevertheless, the general setting in \cite{ZKN23,ZKN25}
differs from ours; indeed, the authors assume (using 
the notation from Theorem \ref{teo se aprox a admis}) that 
$\{\la_1,\ldots,\la_r\}$ and $\{\la_{r+1},\ldots,\la_n\}$ are disjoint, and the accuracy of Ritz vectors is measured in terms of the distance to the dominant eigenspace $\cX_r$. On the other hand, our setting allows that $\la_r=\la_{r+1}$ (since $h\leq r< k$ and we can have that $\la_h=\ldots=\la_k$, for example) and the accuracy of Ritz vectors is measured in terms of the distance to the class of $\Lambda$-admissible subspaces.

\subsection{Rayleigh-Ritz method and \texorpdfstring{$\Lambda$}{Λ}-admissible subspaces}\label{sec RR}

We use Notation \ref{nota sec 4} and consider the problem of bounding the distance between subspaces constructed using the Rayleigh-Ritz method 
and the class of $h$-dimensional $\Lambda$-admissible subspaces of $A$; our analysis follows the approach developed in \cite{Nakats17,Nakats}.
Indeed, consider a trial subspace $\cQ\subset \K^n$ with $\dim \cQ=r$ such that 
$h\leq r\ll n$. Following the Rayleigh-Ritz method, we construct the subspace $\widehat{X}_1\subset \K^n$ spanned by the eigenvectors $\{\widehat x_1,\ldots,\widehat x_h\}$ corresponding to the $h$ largest eigenvalues $\widehat{\la}_1\geq \ldots \geq \widehat{\la}_h$ of the hermitian matrix $P_\cQ\,A\,P_\cQ$. In this case, we are interested in getting upper bounds for the distance $d(\Lambda\text{-adm}_h(A),\widehat{X}_1)$, using  (the norm of) the residual $(I-P_\cQ)AP_\cQ$ and information about the separation between eigenvalues of $A$ and eigenvalues of $P_\cQ\,A\,P_\cQ$; 
in case the distance $d(\Lambda\text{-adm}_h(A),\widehat{X}_1)$ is small, the remarks in Section \ref{sec num discusion} imply that 
$P_{\widehat X_1} \,A\,P_{\widehat X_1}$ provide good 
 eigenvalue approximations $\widehat{\la}_1\geq \ldots \geq \widehat{\la}_h$ of the $h$-largest eigenvalues of $A$. 
 Incidentally, our approach also allows us to get upper bounds for $d(\Lambda\text{-adm}_h(A),\cQ)$ in similar terms; this last quantity can be considered as a measure of the {\it quality} of the trial subspace $\cQ$, with respect to the class $\Lambda\text{-adm}_h(A)$ (irrespectively of any method), that is of independent interest.

Next, we present a pseudo-code for the Rayleigh-Ritz method and introduce the notation used in the statement of the main result of this subsection.
\begin{algorithm}
\caption{(Rayleigh-Ritz method)}\label{algoalgo}
\centerline{
}
\begin{algorithmic}[1]
\REQUIRE  $A\in\K^{n\times n}$ self-adjoint, $Q\in \K^{n\times r}$, $R(Q)=\cQ$, $Q^*Q=I_{r}$, target rank $1\leq h\leq r$.
\STATE Compute the compression $Q^*AQ\in\K^{r\times r}$
and the eigen-decomposition $Q^*AQ=\Omega \widehat \Lambda \Omega ^*$ with 
$\Omega\in \cU(r)$, 
$\text{d}(\widehat \Lambda )=(\widehat \la_1,\ldots,\widehat \la_{r})\in \R^{r}$ 
and $\widehat \la_1\geq \ldots\geq \widehat \la_{r}\,.
$
\STATE Let $$\Omega=[\Omega_1 \quad \Omega_2] \py \widehat \Lambda=
\left[\begin{array}{cc}\widehat \Lambda_1& \\  & \widehat \Lambda_2\end{array}\right] \peso{with} \Omega_1\in \K^{r\times h}\py
\widehat \Lambda_1\in \K^{h\times h}\,.$$
\STATE {\bf Return:} $\widehat X_1:=Q \Omega_1\in \K^{n\times h}$ and $\widehat\Lambda_1$.
\end{algorithmic}
\end{algorithm}

\noindent We now describe a convenient inner structure associated with the pair $(A,Q)$ as above. We will need this inner structure in the statement of Theorem \ref{teo cuali porRR y admis1} below.

\begin{nota}\label{rem nota}
Consider Notation \ref{nota sec 4}. 
\medskip\noindent
Let $Q\in \K^{n\times r}$ have orthonormal columns, let $\cQ=R(Q)$ and set a target rank  $1\leq h\leq r$. Apply Algorithm \ref{algoalgo} and let $\Omega$ and $\widehat\Lambda$ be as in Step 2. Furthermore, consider the partitions as in Step 3. We set
$$\widehat  X:=Q \Omega =[Q\Omega_1  \quad Q\Omega_2 ]=[\widehat  X_1 \quad \widehat  X_2]=[\widehat x_1,\ldots,\widehat x_{r}]\,,$$ where
$\widehat X_1=Q\Omega_1=[\widehat x_1,\ldots,\widehat x_{h}] \ \in \K^{n\times h}$ and $\widehat X_2=Q\Omega_2=[\widehat x_{h+1},\ldots,\widehat x_{r}]\in\K^{n\times (r-h)}$. Notice that $\text{Ran}(\widehat X)=\cQ$, since $\Omega$ is unitary, and $\text{Ran}(\widehat X_1)$ is an $h$-dimensional subspace of $\cQ$.
In this case, $(\widehat \Lambda,\widehat X)=((\widehat \lambda_i,\widehat x_i))_{i=1}^{r}$ are Ritz pairs for $1\leq i \leq r$.

We also consider
$\widehat X_3=[\widehat x_{r+1},\ldots,\widehat x_n]\in \K^{n\times (n-r)}$ such that 
$[\widehat X_1\quad \widehat X_2\quad \widehat X_3]\in\cU(n)$. Taking into account the construction of $\widehat X_i$, $i=1,2,3$, we get that 
\beq\label{eq Atil}
\widetilde A:=[\widehat X_1\quad \widehat X_2\quad \widehat X_3]^*\,A\ [\widehat X_1\quad \widehat X_2\quad \widehat X_3]=
\left[\begin{array}{ccc}
\widehat \Lambda_1 & 0 & R_1^*\\
0& \widehat \Lambda_2  & R_2^*\\
R_1 & R_2 & A_3 \end{array}\right]\ \ \text{with} \ \ 
\widehat \Lambda_1=\left[\begin{array}{cc}
\widehat \Lambda_{11} & 0 \\
0& \widehat \Lambda_{12} 
\end{array}\right]
\,,
\eeq
where $\widehat \Lambda_{11}\in\K^{j\times j}$,  $\widehat \Lambda_{12}\in\K^{(h-j)\times (h-j)}$; similarly,  
$$
R_1=\widehat X_3^* A \widehat X_1 =[R_{11}\ R_{12}]\peso{,} 
R_2=\widehat X_3^* A \widehat X_2 \peso{,} 
R:=[R_1 \ \ R_2]
\py 
A_3=\widehat X_3^* A \widehat X_3
\,
$$ where $R_{11}\in \K^{(n-r)\times j}$ and  
$R_{12}\in \K^{(n-r)\times (r-j)}$. Finally, we define the gaps:
$$\widetilde{\text{Gap}}:=\min |\la(\widehat \Lambda_{11})-(\la_{j+1},\ldots,\la_n)| \ \ , \ \ 
    \widehat{\text{Gap}}(l):=\min| \la(\widehat \Lambda_l)-(\la_{k+1},\ldots,\la_n)| \ , \ l=1,\,2 \,, $$ 
and $\text{Gap}_i:=\min|(\la_1,\ldots,\la_i) - \la(A_3)|$ for $i=j,\,k$. 
\end{nota}

\begin{teo}\label{teo cuali porRR y admis1}
Consider Notation \ref{rem nota} and let $\Lambda\text{-adm}_\ell(A)$ denote the class of $\ell$-dimensional $\Lambda$-admissible subspaces of $A$. Then,
\beq \label{eq estim dis amd1}
d(\Lambda\text{-adm}_h(A),R(\widehat X_1))
\leq  
\frac{\uniinv{R_{11}}}{\widetilde{\text{Gap}}}
+ \frac{\uniinv{R_1}}{\widehat{{\rm Gap}}(1)}
\,.
\eeq

\noindent Regarding the subspace $R(\widehat X)=\cQ$: 
If $k\leq r$ and $\dim(P_{R(\widehat X)}(\cX_k))\geq h$ (generic case) then
\beq \label{eq estim dis amd2}
d(\Lambda\text{-adm}_h(A),R(\widehat X))
\leq  
\uniinv{R}\,(\frac{1}{{\rm Gap}_j}+ \frac{1}{{\rm Gap}_k})\,.
\eeq 
\noindent If $r< k$ then 
\beq \label{eq estim dis amd3}
 d(\Lambda\text{-adm}_r(A),R(\widehat X))
    \leq  
    \frac{\uniinv{R}}{{\rm Gap}_j}
    +
    \frac{\uniinv{R_1}}{\widehat{{\rm Gap}}(1)}
    +
    \frac{\uniinv{R_2}}{\widehat{{\rm Gap}}(2)}
    \,,   
\eeq
\noindent where the third term in Eq. \eqref{eq estim dis amd3} should be omitted if $r=h$.
\end{teo}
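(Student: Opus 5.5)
The plan is to reduce each of the three estimates to Theorem \ref{teo dist a adm2}. Writing $\cT$ for $R(\widehat X_1)$, $R(\widehat X)$ or the relevant subspace, that theorem gives
$$d(\Lambda\text{-adm}_\ell(A),\cT)\leq \uniinv{\sin\Theta(\cX_j,\cT)}+\uniinv{\sin\Theta(\cX_k,\cT)}.$$
Each of the two terms will then be bounded by a Davis--Kahan type $\sin\Theta$ argument in the form used by Nakatsukasa \cite{Nakats17,Nakats}. That argument works with the unitarily transformed matrix $\widetilde A$ of Eq. \eqref{eq Atil}, in the coordinates $[\widehat X_1\ \widehat X_2\ \widehat X_3]$.

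\textbf{Estimate \eqref{eq estim dis amd1}.} The hypothesis $\Theta(\cX_k,\cT)<\frac{\pi}{2}I$ of Theorem \ref{teo dist a adm2} has to be checked first. When $\widehat{\rm Gap}(1)>0$ it follows from the $\sin\Theta$ bound below, which then gives $\sin\theta_{\max}<1$ in the relevant regime; otherwise the right-hand side is infinite and there is nothing to prove. For the $\cX_k$ term I use the unique dominant eigenspace of $A$ on which $A$ has eigenvalues $\la_{k+1},\ldots,\la_n$, namely $\cX_k^\perp$. From Eq. \eqref{eq Atil},
$$A\widehat X_1=\widehat X_1\widehat\Lambda_1+\widehat X_3R_1,$$
so $\uniinv{A\widehat X_1-\widehat X_1\widehat\Lambda_1}=\uniinv{R_1}$. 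Write $X_{k,\perp}=[x_{k+1},\ldots,x_n]$ and $\Lambda_{k,\perp}={\rm diag}(\la_{k+1},\ldots,\la_n)$. Multiplying the identity by $X_{k,\perp}^*$ gives the Sylvester equation
$$\Lambda_{k,\perp}(X_{k,\perp}^*\widehat X_1)-(X_{k,\perp}^*\widehat X_1)\widehat\Lambda_1=X_{k,\perp}^*\widehat X_3R_1.$$
The standard Sylvester bound for diagonal (normal) coefficients then yields
$$\uniinv{\sin\Theta(\cT,\cX_k)}=\uniinv{X_{k,\perp}^*\widehat X_1}\leq \uniinv{R_1}/\widehat{\rm Gap}(1).$$
For $\dim\cT\leq k$ this equals $\uniinv{\sin\Theta(\cX_k,\cT)}$ up to the zero-padding convention. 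The $\cX_j$ term is handled the same way with $\widehat X_{11}=\widehat X_1[:,1{:}j]$: here $A\widehat X_{11}=\widehat X_{11}\widehat\Lambda_{11}+\widehat X_3R_{11}$, and the relevant complementary spectrum is $(\la_{j+1},\ldots,\la_n)$. This gives
$$\uniinv{\sin\Theta(\cX_j,R(\widehat X_{11}))}\leq\uniinv{R_{11}}/\widetilde{\rm Gap}.$$
Monotonicity (Eq. \eqref{eq monotonia de ang2}) then passes from $R(\widehat X_{11})$ to $\cT\supseteq R(\widehat X_{11})$. One point needs care: both subspaces have dimension $j$, so I estimate $\uniinv{\sin\Theta(R(\widehat X_{11}),\cX_j)}$, which is symmetric in equal dimensions, and then enlarge the second argument.

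\textbf{Estimates \eqref{eq estim dis amd2} and \eqref{eq estim dis amd3}.} For \eqref{eq estim dis amd2} the roles are reversed, since $\dim\cQ=r\geq k$. I project the exact eigenvectors onto $\cQ^\perp=R(\widehat X_3)$. The equation $AX_i=X_i\Lambda_i$ for $i=j,k$, multiplied by $\widehat X_3^*$, gives
$$A_3(\widehat X_3^*X_i)-(\widehat X_3^*X_i)\Lambda_i=-R\,(\widehat X^*X_i).$$
The Sylvester bound gives $\uniinv{\sin\Theta(\cX_i,\cQ)}\leq \uniinv{R}/{\rm Gap}_i$, using $\|\widehat X^*X_i\|_2\leq1$. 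Theorem \ref{teo dist a adm2} applies because its hypothesis $\Theta(\cX_k,\cQ)<\frac{\pi}{2}I$ is exactly the generic condition $\dim P_{\cQ}(\cX_k)\geq h$ (after restricting to $\min\{t,k\}$ angles). For \eqref{eq estim dis amd3}, where $r<k$, the target is $\Lambda\text{-adm}_r(A)$. The $\cX_j$ term is obtained as in \eqref{eq estim dis amd2}, since $j<r$. For the $\cX_k$ term I treat $\cQ=R(\widehat X_1)\oplus R(\widehat X_2)$ and use the two blockwise Sylvester bounds $\uniinv{R_l}/\widehat{\rm Gap}(l)$, $l=1,2$, with the $\cX_k^\perp$ eigenbasis as above. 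Because $\widehat X_1\perp\widehat X_2$, the sine of the angle of the sum is controlled through
$$\uniinv{X_{k,\perp}^*[\widehat X_1\ \widehat X_2]}\leq\uniinv{X_{k,\perp}^*\widehat X_1}+\uniinv{X_{k,\perp}^*\widehat X_2}.$$
When $r=h$, $\widehat X_2$ is empty and the third term disappears.

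\textbf{Main obstacle.} I expect the main difficulty to be the bookkeeping of principal angles between subspaces of different dimensions: identifying $\uniinv{X_{k,\perp}^*\widehat X_1}$ with $\uniinv{\sin\Theta(\cX_k,\cT)}$ and checking the $<\pi/2$ hypothesis of Theorem \ref{teo dist a adm2}. By Eq. \eqref{eq sobre sen ang princ1}, the nonzero values of $\sin\Theta(\cX_k,\cT)$ and of $(I-P_{\cX_k})P_\cT$ coincide when $\dim\cT\leq k$. If the hypothesis fails, I would argue by density, perturbing $\cT$ slightly and using the continuity of the distance function.
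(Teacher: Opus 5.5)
Your strategy is the paper's: reduce each estimate to Theorem \ref{teo dist a adm2} and bound the two sine terms with Nakatsukasa-type Sylvester arguments. The paper collects exactly your Sylvester computations (for $\widehat X_1$, $\widehat X_{11}$, $\widehat X_2$ against $X_{k,\perp}$, $X_{j,\perp}$, and for $\widehat X_3^*X_i$) in Theorem \ref{teo varias cotas}.

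\textbf{The step that fails.} In \eqref{eq estim dis amd2} you apply Theorem \ref{teo dist a adm2} directly to $\cQ$, and your justification is false. Since $r\geq k$, the hypothesis of that theorem is $\Theta(\cX_k,\cQ)<\frac{\pi}{2}I_k$. This means $\cX_k\cap\cQ^\perp=\{0\}$, i.e. $\dim P_\cQ(\cX_k)=k$. That is strictly stronger than the assumption $\dim P_\cQ(\cX_k)\geq h$, because $h<k$. So when $h\leq\dim P_\cQ(\cX_k)<k$, the theorem cannot be invoked on $\cQ$ as you do.

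\textbf{How the paper avoids this.} It passes through an intermediate subspace: choose $\cT$ with $\dim\cT=h$ and $P_\cQ(\cX_j)\subseteq\cT\subseteq P_\cQ(\cX_k)$. This is exactly where $\dim P_\cQ(\cX_k)\geq h$ is used. For $x\in\cX_k$ one has $\langle P_\cQ x,x\rangle=\|P_\cQ x\|^2$, so $P_\cQ(\cX_k)\cap\cX_k^\perp=\{0\}$ and $\cT$ satisfies the hypothesis. Moreover $\|\sin\Theta(\cX_j,\cT)\|=\|\sin\Theta(\cX_j,\cQ)\|$ and $\|\sin\Theta(\cX_k,\cT)\|\leq\|\sin\Theta(\cX_k,\cQ)\|$. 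Since $\cT\subseteq\cQ$, monotonicity gives $\|\sin\Theta(\cS,\cQ)\|\leq\|\sin\Theta(\cS,\cT)\|$. Your bounds $\|R\|/{\rm Gap}_j$ and $\|R\|/{\rm Gap}_k$ then finish the proof.

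\textbf{Your density fallback also works, if stated correctly.} Apply it to the geometric inequality of Theorem \ref{teo dist a adm2}, not to the final residual estimates. A perturbation of $R(\widehat X_1)$ or $\cQ$ no longer carries the same $R$, $R_1$, $R_{11}$ and gaps.
\begin{itemize}
\item On a fixed Grassmannian $\cG_{n,t}$ with $t\geq h$, the map $d(\Lambda\text{-adm}_h(A),\cdot)$ is Lipschitz in $P_\cT$: it is a minimum over a compact class of norms of $(I-P_\cT)P_\cS$.
\item The right-hand side $\|\sin\Theta(\cX_j,\cdot)\|+\|\sin\Theta(\cX_k,\cdot)\|$ is also Lipschitz in $P_\cT$: it is a sum of norms of $(I-P_\cT)P_{\cX_j}$ and of $(I-P_{\cX_k})P_\cT$ or $(I-P_\cT)P_{\cX_k}$.
\item The subspaces satisfying the hypothesis form a dense set.
\end{itemize}
Hence Theorem \ref{teo dist a adm2} holds for every $\cT$ with $\dim\cT\geq h$. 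Plugging in the sine bounds for the actual subspaces then gives all three estimates. This route even makes the assumption $\dim P_\cQ(\cX_k)\geq h$ unnecessary.

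The same density step is what you need in \eqref{eq estim dis amd1}. Your first check there only covers $\|R_1\|_2<\widehat{\rm Gap}(1)$. When $0<\widehat{\rm Gap}(1)\leq\|R_1\|_2$, the right-hand side is finite and nothing forces $\theta_{\max}<\pi/2$. The paper does not verify the hypothesis of Theorem \ref{teo dist a adm2} for \eqref{eq estim dis amd1} or \eqref{eq estim dis amd3} at all. With the density step stated as above, your proof closes that small gap as well.
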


\begin{proof}
    See Section \ref{sec6.3}.
\end{proof}

\begin{rem}\label{rems sobre cotas con residuos}
Consider Notation \ref{rem nota} and assume that the gaps $\la_j-\la_{j+1}$ and $\la_k-\la_{k+1}$ are significant, even with respect to the spread $\la_{j+1}-\la_k$ of the cluster (i.e. $\delta\,(\la_\ell-\la_{\ell+1})^{-1}\ll 1$ for $\ell=j,\,k$). Below we elaborate on the fact that if the trial subspace $R(\widehat X)$ is sufficiently nice for eigenvalue approximation, then all the gaps appearing in the inequalities in Theorem \ref{teo cuali porRR y admis1} would be significant, even if $\la_h-\la_{h+1}\approx 0$. Indeed:
 
\begin{enumerate}
 \item For the gaps in Eq. \eqref{eq estim dis amd1}, if $\la_i\approx \widehat\la_i$ for $1\leq i\leq h$, we would get $\widetilde{\text{Gap}}\approx \la_j-\la_{j+1}$ and
$\widehat{{\rm Gap}}(1)\approx \la_h-\la_{k+1}\geq \la_k-\la_{k+1}$.
\item   For the gaps in Eq. \eqref{eq estim dis amd2}, where we are assuming that $k\leq r$, if $\la_i\approx \widehat\la_i$ for $1\leq i\leq r$ and $\lambda_{\max}(A_3)\leq\la_{k+1}$, we would get ${\rm Gap}_j\approx (\la_j-\la_{j+1})+\delta+(\la_k-\la_{k+1})$ and
${\rm Gap}_k\approx \la_{k}-\la_{k+1}$.
\item For the gaps in Eq. \eqref{eq estim dis amd3}, where we are assuming that $k>r$, if $\la_i\approx \widehat\la_i$ for $1\leq i\leq r$ and $\lambda_{\max}(A_3)\leq\la_{r+1}$,
we would get ${\rm Gap}_j\approx \la_j- \la_{r+1}\geq \la_j-\la_{j+1}$, $\widehat{{\rm Gap}}(1)\approx\la_h-\la_{k+1}\geq \la_k-\la_{k+1} $ and $\widehat{{\rm Gap}}(2)\approx\la_r-\la_{k+1} \geq \la_k-\la_{k+1}$.
    \end{enumerate}
These facts are to be compared with the
{\it good gap} and {\it bad gap} appearing in the upper bound obtained in \cite{Nakats} (see the discussion in \cite[Section 2]{Nakats}).

We remark that if the upper bound in Eq. \eqref{eq estim dis amd1} is zero, then $R(\widehat X_1)\in \Lambda\text{-adm}_h(A)$, even when $\|R\|>0$. On the other hand, by \cite[Theorem 5.1]{Nakats} we have that 
$$\sin\theta_{\max}(\cX_h,R(\widehat X_1) )\leq \frac{\|R\|_2}{{\rm Gap}}\sqrt{1+\frac{\|R_2\|_2^2}{{\rm gap}^2}}\,,$$
where ${\rm Gap}=\min|\lambda(\Lambda_1)-\lambda(A_3)|>0$ and ${\rm gap}=\min|\lambda(\Lambda_1)-\lambda(\widehat \Lambda_2)|>0$.
For this upper bound to be zero, it is required that $R=0$ (i.e., the range $R(\widehat X)$
is an eigenspace of $A$). 

There are situations in which $\sin \theta_{\max}(\cX_h,R(\widehat X_1))$ is large and yet $R(\widehat X_1)$ is close to some (generic) $\cS\in\Lambda\text{-adm}_h(A)$ (see Figures 1-3 in Section \ref{sec num exa}). In these cases, for the operator norm, we expect $\|R_{11}\|_2\approx0$ and $\|R_1\|_2$ to be close to $\|A P_\cS-P_\cS AP_\cS\|_2\approx \delta$ for this generic $\cS$ (see Remark \ref{rem adm son casi inv}). Hence, the upper bound in Eq. \eqref{eq estim dis amd1} would become approximately $(\la_k-\la_{k+1})^{-1}\,\delta$. Since the spread $\delta>0$ is fixed, this upper bound would not become arbitrarily small (i.e., would not reflect the proximity between $R(\widehat X_1)$ and $\cS$). Nevertheless, this upper bound is informative in our present setting 
(see the numerical examples in Section \ref{sec num exa}). In contrast, bounds as in \cite{Nakats} cannot be informative when $\sin \theta_{\max}(\cX_h,R(\widehat X_1))$ is large. 

On the other hand, if $\sin \theta_{\max}(\cX_h,R(\widehat X_1))$ is small, the upper bound provided by Eq. \eqref{eq estim dis amd1} avoids the influence of the gap $\la_h-\la_{h+1}\leq\delta$ and provides informative (sharp) estimates (see Figures 4-5 in Section \ref{sec num exa}).
Finally notice that when the upper bound in \cite[Section 2]{Nakats} is informative (i.e., $\|R\|$ is small) then the upper bounds in Eqs. \eqref{eq estim dis amd2} and \eqref{eq estim dis amd3} are also informative.
\end{rem}

Consider Notation \ref{rem nota}. We end this section by pointing out that our approach allows us to obtain upper bounds for the distance between the class $\Lambda\text{-adm}_h(A)$ and the subspaces spanned by $\{\widehat x_{\sigma(1)},\ldots, \widehat x_{\sigma(h)}\}$, where $\sigma:\{1,\ldots,h\}\rightarrow \{1,\ldots,r\}$ is any injective function (that is, we have not made essential use of the fact that  $\widehat X_1$ is spanned by the eigenvectors of $P_\cQ A P_\cQ$, corresponding to its largest eigenvalues). Nevertheless, for definiteness, we have made this specific choice for $\widehat X_1$ (see Section \ref{sec num exa} for some numerical examples that test Theorem \ref{teo cuali porRR y admis1} above).

\subsection{On the sensitivity of \texorpdfstring{$\Lambda$}{Λ}-admissible subspaces computation}\label{sec sensitivity}

In this subsection, we turn our attention to the sensitivity of the computation of the class of $\Lambda$-admissible subspaces of a Hermitian matrix, measured in terms of a suitably defined condition number.
The condition number of computing left and right singular subspaces has been treated in the pioneering work of Sun \cite{Sun} and has been recently revisited in \cite{Nick}, where the exact value of the condition number is obtained. Using different approaches, these works show that given a matrix $M\in\K^{n\times n}$ with singular values $\sigma_1\geq\ldots\geq\sigma_n$, the condition number for the computation of its $h$-dimensional singular subspace associated with the largest singular values (with respect to the Frobenius norm) is \cite[Theorem 1]{Nick} 
\[
\max_{
\substack{1\leq i\leq h \\[1mm] h+1\leq l\leq n}
}
\frac{1}{|\sigma_i-\sigma_l|}
\;
\sqrt{\frac{\sigma_i^2+\sigma_l^2}{(\sigma_i+\sigma_l)^2}}\,,
\]
\noindent which is essentially $(\sigma_h-\sigma_{h+1})^{-1}$ since the fraction inside the square root is bounded between $1/2$ and 1. The techniques and results from \cite{Sun, Nick} can be used to obtain upper bounds for the condition number of the computation of dominant eigenspaces of self-adjoint matrices. Nevertheless, for completeness, we include a simple upper bound (see Proposition \ref{pro cond num 1} below) that will allow us to compare it with our upper bound derived for $\Lambda$-admissible subspaces (see Theorem \ref{teo sensitivity of admiss} below).

We first consider the condition number of the dominant eigenspaces of self-adjoint matrices. To describe the necessary context, we follow Notation \ref{nota sec 4}. 
 Let
\[
\cB
:=
\{B\in\K^{n\times n} \,:\, B=B^*\,,\; \lambda_h(B)>\lambda_{h+1}(B)\}
\,,
\]
which is exactly the set of self-adjoint matrices such that the $h$-dimensional dominant eigenspace is uniquely defined. Notice that this is an open subset of the set of Hermitian $n \times n$ matrices, with the topology induced by a fixed u.i.n. denoted by $\uniinv{\cdot}$. Indeed, given $B\in\cB$, let $r_B:=(\lambda_h(B)-\lambda_{h+1}(B))/2>0$. By Weyl's inequality for eigenvalues \cite[Section III.2]{Bhatia}, if $C=C^*\in\K^{n\times n}$ is such that $\|B-C\|_2< r_B$, we have that
\begin{equation}\label{eq continuidad de autoval}
    \lambda_h(C)
    \geq
    \lambda_h(B) - \|B-C\|_2
    >
    \lambda_{h+1}(B)+\|B-C\|_2
    \geq
    \lambda_{h+1}(C)
    \,,
\end{equation}
\noindent
so $C\in\cB$. 
Hence, $\cB$ is a natural domain in which to study the sensitivity of $h$-dimensional dominant eigenspaces of self-adjoint matrices. 
Consider $g:
\cB
\rightarrow
\cG_{n,\,h}$ such that $g(B)=\cX_h(B)$ is the uniquely determined $h$-dimensional dominant eigenspace of $B\in\cB$. 
For the fixed u.i.n. $\uniinv{\cdot}$, we consider
the associated metric in $\cG_{n,\,h}$ given by $(\cT_1,\,\cT_2)\mapsto\uniinv{\sin\Theta(\cT_1,\,\cT_2)}$ for $\cT_1,\,\cT_2\in\cG_{n,\,h}$. 

A precise and general definition of condition numbers of maps between metric spaces, along with many examples, can be found in \cite{Rice}. In the next result, we apply the general notion of condition number in our particular setting. Our approach to obtain an upper bound for the condition number of $g$ is essentially based on Davis-Kahan's sine Theorem from \cite{DK70}.

\begin{pro}\label{pro cond num 1}
    For $B\in\cB$ and a u.i.n. $\|\ \|$, we have that     
    \[
    \kappa[g](B):=
    \lim_{\epsilon\rightarrow 0}
    \sup_{\substack{C\in \cB\\0<\uniinv{B-C}<\epsilon}}\frac{\uniinv{\sin\Theta(\cX_h(B),\,\cX_h(C))}}{\uniinv{B-C}}\leq\frac{1}{\lambda_h(B)-\lambda_{h+1}(B)}
    \,.
    \]
\end{pro}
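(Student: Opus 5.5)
We will bound the difference quotient for each fixed $C$ close to $B$ by the Davis--Kahan $\sin\Theta$ theorem \cite{DK70}, and then pass to the limit using continuity of the eigenvalues.

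Fix $B\in\cB$ and $C\in\cB$ with $\|B-C\|_2<r_B=(\lambda_h(B)-\lambda_{h+1}(B))/2$. Set $E:=C-B$, $\cX:=\cX_h(B)$ and $\widetilde\cX:=\cX_h(C)$. Since both subspaces have dimension $h$, we have $\uniinv{\sin\Theta(\cX,\widetilde\cX)}=\uniinv{(I-P_{\cX})P_{\widetilde\cX}}$.

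The key step is a Sylvester-type identity. Write $P=P_{\widetilde\cX}$ and $P^\perp_{B}=I-P_{\cX}$. Because $\widetilde\cX$ is $C$-invariant and $\cX^\perp$ is $B$-invariant,
\[
P^\perp_B\, B\,(P^\perp_B P)-(P^\perp_B P)\,C\,P \;=\; -P^\perp_B\, E\, P\,.
\]
Restricted to the relevant subspaces, this reads $B_2 Z - Z C_1 = -P^\perp_B E P$, where:
\begin{itemize}
\item $B_2=B|_{\cX^\perp}$ has spectrum in $(-\infty,\lambda_{h+1}(B)]$;
\item $C_1=C|_{\widetilde\cX}$ has spectrum in $[\lambda_h(C),\infty)$;
\item $Z=P^\perp_B P$.
\end{itemize}
Since $B_2$ and $C_1$ are Hermitian with spectra separated by
\[
\gamma_C:=\lambda_h(C)-\lambda_{h+1}(B)\;\ge\;\lambda_h(B)-\lambda_{h+1}(B)-\|E\|_2>0
\]
(by Weyl's inequality, as in Eq. \eqref{eq continuidad de autoval}), the Davis--Kahan/Bhatia--Davis--McIntosh bound for Sylvester equations with one-sided separated spectra applies in every unitarily invariant norm. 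It gives
\[
\uniinv{Z}\;\le\;\gamma_C^{-1}\,\uniinv{P^\perp_B E P}\;\le\;\gamma_C^{-1}\,\uniinv{E}.
\]
Therefore
\[
\frac{\uniinv{\sin\Theta(\cX_h(B),\cX_h(C))}}{\uniinv{B-C}}\;\le\;\frac{1}{\lambda_h(B)-\lambda_{h+1}(B)-\|B-C\|_2}\,.
\]

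To conclude, note that $\|B-C\|_2\le c\,\uniinv{B-C}$ for a constant $c$ depending only on the norm (all norms are equivalent; in fact $\|\cdot\|_2\le\uniinv{\cdot}$ after the usual normalization). So if $\uniinv{B-C}<\epsilon$, the supremum over such $C$ is at most $(\lambda_h(B)-\lambda_{h+1}(B)-c\epsilon)^{-1}$. Letting $\epsilon\to0$ yields the claimed bound on $\kappa[g](B)$.

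The main obstacle is making sure the $\sin\Theta$ theorem is used in the version valid for arbitrary unitarily invariant norms with one-sided separation (spectrum of $C_1$ above $\lambda_h(C)$, spectrum of $B_2$ below $\lambda_{h+1}(B)$). In that version the constant is exactly $1/\gamma_C$, with no factor $\pi/2$. I would cite \cite{DK70} (or \cite[Thm. VII.3.1]{Bhatia}) for this. The remaining steps are routine identifications.
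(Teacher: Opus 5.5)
Your proposal is correct and follows essentially the paper's proof. In both, Weyl's inequality gives $\lambda_h(C)-\lambda_{h+1}(B)\geq\lambda_h(B)-\lambda_{h+1}(B)-\|B-C\|_2>0$. The Davis--Kahan $\sin\Theta$ theorem (Bhatia, Thm.~VII.3.1, constant $1$ under one-sided separation in every u.i.n.) then gives $\uniinv{\sin\Theta(\cX_h(B),\cX_h(C))}\leq\uniinv{B-C}/(\lambda_h(B)-\lambda_{h+1}(B)-\|B-C\|_2)$, and equivalence of norms lets $\epsilon\to0$. The only difference is that you write out the Sylvester equation behind the $\sin\Theta$ theorem, which the paper simply cites.
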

\begin{proof}
    See Section \ref{subsec prueb sensit}
\end{proof}

\begin{exa} Although the upper bound in Proposition \ref{pro cond num 1} does not coincide with the exact value of $\kappa[g](B)$, this upper bound is sharp for the operator (or spectral) norm $\|\cdot\|_2$. Let $\cS,\,\cT\in\mathcal{G}_{n,\,h}$
and for some $\la>0$ set $B=\la\,P_\cS$ and $C=\la\,P_\cT$ . In this case, $B,\,C\in\cB$, 
    $g(B)=\cX_h(B)=\cS$ and $g(C)=\cX_h(C)=\cT$; moreover,  $\|B-C\|_2=\la\,\|\sin \Theta(\cS,\cT)\|_2$ and $\la_h(B)-\la_{h+1}(B)=\la$. Hence, $$
    \frac{\uniinv{\sin\Theta(\cX_h(B),\,\cX_h(C))}}{\uniinv{B-C}}=
    \frac{\|\sin \Theta(\cS,\cT)\|_2}{\la\,\|\sin \Theta(\cS,\cT)\|_2}=\frac{1}{\la}=\frac{1}{\la_h(B)-\la_{h+1}(B)}\,.$$
    Since we can construct subspaces $\cT$ arbitrarily close to $\cS$ (or equivalently, orthogonal projection $P_\cT$ arbitrarily close to $P_\cS$), we conclude that in this case $\kappa[g](B)=(\la_h(B)-\la_{h+1}(B))^{-1}$.
\end{exa}

In what follows, we consider a possible adaptation of these ideas to $\Lambda$-admissible subspaces. Let $0\leq j<h< k< n$ and set
\begin{equation}\label{eq defi Ajk}
\cA
=
\{B\in\K^{n\times n} \,:\, B=B^*\,,\; \lambda_j(B)>\lambda_{j+1}(B) \text{ and } \lambda_k(B)>\lambda_{k+1}(B)\}
\,,
\end{equation}
which is an open subset of the set of Hermitian $n \times n$ matrices. 
Indeed, given $B\in\cA$, let $r_B:=\min\{\lambda_j(B)-\lambda_{j+1}(B)\,,\;\lambda_k(B)-\lambda_{k+1}(B)\}/2$. By Weyl's inequality for eigenvalues, if $C\in\K^{n\times n}$ is also Hermitian and such that $\|B-C\|< r_B$, then $C\in\cA$.
Notice that if $B\in\cA$, then the class $\Lambda\text{-adm}_h(B)$ (with respect to the indexes $j$ and $k$) is well defined, whether $\lambda_h(B)=\lambda_{h+1}(B)$ or not. It turns out that $\cA$ is a natural domain for studying the sensitivity of the class of $\Lambda$-admissible subspaces. 
Indeed, consider $f:
\cA
\rightarrow
\cP(\cG_{n,\,h})$ given by
\[
f(B)
=
\Lambda\text{-adm}_h(B)
=
\{\cS\in\cG_{n,\,h} \,:\, \cX_j(B)\subseteq\cS\subseteq\cX_k(B)\}\,,
\]
\noindent for all $B\in\cA$. We are interested in choosing some metrics for the domain and co-domain of $f$ and computing (or at least finding an upper bound of) the associated condition number.

Let us consider a u.i.n. $\uniinv{\cdot}$ for the domain of $g$ and for its codomain the Hausdorff distance associated to the metric in $\cG_{n,\,h}$ given by $(\cT_1,\,\cT_2)\mapsto\uniinv{\sin\Theta(\cT_1,\,\cT_2)}$ for $\cT_1,\,\cT_2\in\cG_{n,\,h}$. 
This distance will be denoted as $d_H$ and is given by
\[
d_H(\mathfrak{C},\,\mathfrak{D})
=
\max
\left\{
\;
\sup_{\cC\in\mathfrak{C}}
\;
\inf_{\cD\in\mathfrak{D}}
\uniinv{\sin(\Theta(\cC,\,\cD))}
\coma
\sup_{\cD\in\mathfrak{D}}
\;
\inf_{\cC\in\mathfrak{C}}
\uniinv{\sin(\Theta(\cC,\,\cD))}
\;
\right\}
\;\text{for}\; 
\mathfrak{C},\,\mathfrak{D}\subseteq\cG_{h,\,n}
\,.
\]
The next result provides an upper bound for the sensitivity of the computation
of the class of $\Lambda$-admissible subspaces a self-adjoint matrices in terms of the eigen-gaps at indexes $j$ and $k$.
\begin{teo}\label{teo sensitivity of admiss}
    Given $A$ as in Notation \ref{nota sec 4} consider $f:\cA\rightarrow \cP(\cG_{n,\,h})$ as above and fix an u.i.n. $\uniinv{\cdot}$. 
        Then, the condition number $\kappa[f](A)$ of $f$ 
        at $A$ satisfies that 
    \[
    \kappa[f](A)
    :=
    \lim_{\epsilon\rightarrow 0}
    \sup_{\substack{B\in \cA\\\uniinv{A-B}<\epsilon}}\frac{d_H(\Lambda\text{-adm}_h(A),\,\Lambda\text{-adm}_h(B))}{\uniinv{A-B}}
    \leq
    \frac{1}{\lambda_j-\lambda_{j+1}}
    +
    \frac{1}{\lambda_k-\lambda_{k+1}}\,.
    \]
    If $j=0$, then the first term in the right-hand side of the inequality above should be omitted.
\end{teo}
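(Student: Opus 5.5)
The plan is to bound each one-sided term of the Hausdorff distance by applying Theorem \ref{teo dist a adm2} to a single admissible subspace of one matrix, using the class of the other matrix. The needed perturbation estimates for $\cX_j$ and $\cX_k$ will come from Davis--Kahan's $\sin\Theta$ theorem, exactly as in Proposition \ref{pro cond num 1}.

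Fix $B\in\cA$ with $\uniinv{A-B}<\epsilon$, where $\epsilon$ is small. First we bound $\sup_{\cS\in\Lambda\text{-adm}_h(B)}\inf_{\cS'\in\Lambda\text{-adm}_h(A)}\uniinv{\sin\Theta(\cS,\cS')}$. Let $\cS\in\Lambda\text{-adm}_h(B)$, so $\cX_j(B)\subseteq\cS\subseteq\cX_k(B)$.

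To apply Theorem \ref{teo dist a adm2} with $\cT=\cS$ and the matrix $A$, we need $\Theta(\cX_k(A),\cS)<\frac{\pi}{2}I_h$. Monotonicity of principal angles (Eq. \eqref{eq monotonia de ang}, using $\cS\subseteq\cX_k(B)$ with $\dim\cS=h\leq k$) gives $\theta_{\max}(\cX_k(A),\cS)\leq\theta_{\max}(\cX_k(A),\cX_k(B))$ in the appropriate ordering. The right-hand side is less than $\pi/2$ once $\uniinv{A-B}$ is small, by Davis--Kahan. The theorem then yields
$$\inf_{\cS'\in\Lambda\text{-adm}_h(A)}\uniinv{\sin\Theta(\cS',\cS)}\leq\uniinv{\sin\Theta(\cX_j(A),\cS)}+\uniinv{\sin\Theta(\cX_k(A),\cS)}.$$

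Next we compare each term with the corresponding eigenspaces of $B$. Since $\cX_j(B)\subseteq\cS$, Eq. \eqref{eq monotonia de ang2} gives $\uniinv{\sin\Theta(\cX_j(A),\cS)}\leq\uniinv{\sin\Theta(\cX_j(A),\cX_j(B))}$. For the second term, $\sin\Theta(\cX_k(A),\cS)$ has the same nonzero singular values as $(I-P_{\cX_k(A)})P_\cS$, which is the $\sin\Theta$ of $\cS$ against $\cX_k(A)$. Because $\cS\subseteq\cX_k(B)$, this is dominated by $(I-P_{\cX_k(A)})P_{\cX_k(B)}$, so $\uniinv{\sin\Theta(\cX_k(A),\cS)}\leq\uniinv{\sin\Theta(\cX_k(B),\cX_k(A))}$. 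Handling this equidimensional-versus-smaller-subspace symmetry carefully is where I expect the main technical care. I would pass through the operators $(I-P_{\cX_k(A)})P_\cS=(I-P_{\cX_k(A)})P_{\cX_k(B)}P_\cS$ and use $\uniinv{XY}\leq\uniinv{X}\,\|Y\|_2$ rather than the angle formalism directly.

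Finally we apply the Davis--Kahan $\sin\Theta$ theorem. For $\ell=j,k$ it gives $\uniinv{\sin\Theta(\cX_\ell(A),\cX_\ell(B))}\leq\uniinv{A-B}/(\lambda_\ell(A)-\lambda_{\ell+1}(A)-c\uniinv{A-B})$. Here the gap is taken between $\lambda_1,\ldots,\lambda_\ell$ of $A$ and $\lambda_{\ell+1}(B),\ldots$, and by Weyl it is at least $\lambda_\ell-\lambda_{\ell+1}-\|A-B\|_2$. This is the same estimate used in Proposition \ref{pro cond num 1}.

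The reverse one-sided term is handled symmetrically: take $\cS'\in\Lambda\text{-adm}_h(A)$ and apply Theorem \ref{teo dist a adm2} to the matrix $B$ with $\cT=\cS'$. This gives the same bound with the gaps of $B$, which by Weyl differ from those of $A$ by at most $2\|A-B\|_2\leq2\uniinv{A-B}$. Hence
$$d_H\leq\uniinv{A-B}\Big(\frac{1}{\lambda_j-\lambda_{j+1}-2\uniinv{A-B}}+\frac{1}{\lambda_k-\lambda_{k+1}-2\uniinv{A-B}}\Big).$$
Dividing by $\uniinv{A-B}$ and letting $\epsilon\to0$ gives the claim. When $j=0$ we have $\cX_0=\{0\}$, the first term vanishes throughout, and it is omitted.
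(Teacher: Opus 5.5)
Your proposal is correct and follows the paper's route. The paper packages your one-sided argument as Lemma \ref{lem cond num aux}: monotonicity of principal angles plus Theorem \ref{teo dist a adm2} give $d_H\leq\uniinv{\sin\Theta(\cX_j(A),\cX_j(B))}+\uniinv{\sin\Theta(\cX_k(A),\cX_k(B))}$, and the paper then applies the Davis--Kahan $\sin\Theta$ theorem as in Proposition \ref{pro cond num 1}. Your explicit check that $\Theta(\cX_k(A),\cS)<\frac{\pi}{2}I_h$ for small perturbations is a detail the paper's lemma leaves implicit, and in the reverse direction you do not need the gaps of $B$: since $\uniinv{\sin\Theta(\cX_\ell(B),\cX_\ell(A))}=\uniinv{\sin\Theta(\cX_\ell(A),\cX_\ell(B))}$ for equidimensional subspaces, the same Davis--Kahan bound covers both one-sided terms.
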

\begin{proof}
    See Section \ref{subsec prueb sensit}
\end{proof}

We remark that the upper bound for the condition number of computing the class of $\Lambda$-admissible subspaces in Theorem \ref{teo sensitivity of admiss} is sharp for the operator norm (see Example \ref{exa condnum sharp} in Section \ref{subsec prueb sensit}). 

\begin{rem} Here we point out a consequence of Theorem \ref{teo sensitivity of admiss}.
Thus, consider the notation in Theorem \ref{teo sensitivity of admiss} and set some $\delta_0>0$. Let $\epsilon_0>0$ be such that 
    \[
    \sup_{\substack{B\in \cA\\\uniinv{A-B}<\epsilon_0}}\frac{d_H(\Lambda\text{-adm}_h(A),\,\Lambda\text{-adm}_h(B))}{\uniinv{A-B}}
    <
    \kappa[f](A)+\delta_0
    \,.
    \]
    \noindent Then, for every $C\in\cA$ such that $\|A-C\|<\epsilon_0$ and $\lambda_h(C)>\lambda_{h+1}(C)$ we have
    \begin{align*}
        \inf_{\cS_A\in\Lambda\text{-adm}_h(A)} \|\sin\Theta(\cS_A,\,\cX_h(C))\|
        &\leq
        \sup_{\cS_C\in\Lambda\text{-adm}_h(C)}
        \inf_{\cS_A\in\Lambda\text{-adm}_h(A)} \|\sin\Theta(\cS_A,\,\cS_C)\|
        \\
        &\leq
        \frac{d_H(\Lambda\text{-adm}_h(A),\,\Lambda\text{-adm}_h(C))}{\|A-C\|} \quad \|A-C\|
        \\
        &\leq
        (\kappa[f](A)+\delta_0)\;\|A-C\|
        \,.
    \end{align*}
This can be interpreted as follows: fix $A$ as in Notation \ref{nota sec 4} and let $C\in\K^{n\times n}$ be any matrix which satisfies $C\in\cA$ and $\lambda_h(C)>\lambda_{h+1}(C)$ (where the last condition is generic).    
Then, if $\|A-C\|$ is small enough, there must be some $h$-dimensional $\Lambda$-admissible subspace of $A$ close to $\cX_h(C)$.   
This implies that even though the subspaces $\cX_h(C)$ might be very different for different choices of $C$, each of them is close to {\it some} element of $\Lambda\text{-adm}_h(A)$; hence, all such subspaces $\cX_h(C)$ can be used to produce nice low-rank approximations of $A$.
In particular, assume further that $\la_h(A)=\la_{h+1}(A)$ and let $j=\max\{\ell: \la_\ell(A)>\la_h(A)\}$,
and $h<k=\max\{\ell: \la_h(A)=\la_\ell(A)\}$. Then, 
for each 
matrix $C$ as above, the subspace $\cX_h(C)$ is close to some $h$-dimensional dominant subspace of $A$.
\end{rem}

\section{Numerical examples}\label{sec num exa}

In this section we consider some numerical examples related to the results in Section \ref{sec main results} for concrete choices of the parameters involved. 
These have been performed in Python (version 3.13.5) mainly using numpy (version 2.3.1) and scipy (version 1.16) packages with machine precision of $2.22 \times 10^{-16}$. 

We keep using the notation from Section \ref{sec main results}.
We fix $n:=3000$ and construct a symmetric and positive semi-definite matrix $A\in \R^{3000\times 3000}$ with an eigendecomposition $A=X\,\Lambda X^*$, where  the eigenvalues of $A$ are given by $\lambda(A)=(\lambda_1,\ldots,\lambda_{3000})\in (\R^{3000})\da$ and the columns of $X$ are denoted by $x_1,\ldots, x_{3000}\in \R^{3000}$, respectively. We further consider $\cX_l=\overline{\{x_1,\ldots,x_l\}}$, for $1\leq l\leq 3000$\,. 

The eigenvalues of $A$ are chosen to include a cluster, spanning indices $j=5$ to $k=30$.
With the target dimension fixed at $h=10$, the eigenvalue $\lambda_{10}$ falls within this cluster, whose spread is given by $\delta:=\la_{6}-\la_{30}$. 
We use the indices $j=5$ and $k=30$ for defining $\Lambda\text{-adm}_{10}(A)$ and introduce the convenient parameter 

$$0<\gamma:=\min\{\la_5-\la_6\coma \la_{30}-\la_{31} \}\implies (\la_5-\la_6)^{-1}\coma (\la_{30}-\la_{31})^{-1}\leq \gamma^{-1}\,.$$

In most of our examples, the eigenvalues have an exponential decay approximately of the form $10 * \exp(-0.01 * i)$ for $i\notin[5,30]$,  outside the cluster.  The final numerical example involves a matrix $A$ whose eigenvalues decay linearly outside the cluster, interpolating the values $\la_1\approx 10$ and $\la_{3000}=1$, which corresponds to a much challenging numerical setting.

Regarding the initial subspaces, we fix $r:= 20$ and draw samples $W\in\R^{3000\times 20}$ from a standard Gaussian random matrix. 
Notice that $h=10\leq r=20< k=30$. In all the plots below, the $x$-axis denotes the number of iterations of the given iterative method applied to the initial subspace and $A$ (SIM method for Figures 1-3 or Krylov method for Figures 4-5). For the rest of this section, we will abbreviate $\sin \theta_{\max}(\cS,\cT)$ as $d(\cS,\cT)$ for two subspaces $\cS$ and $\cT$. All upper bounds for the sines of principal angles are truncated at the value 1 (since clearly $\sin(\theta)\leq 1$, for $\theta\in[0,\pi/2]$).

\begin{figure}[htb!]
     \centering     \includegraphics[width=0.80\textwidth]{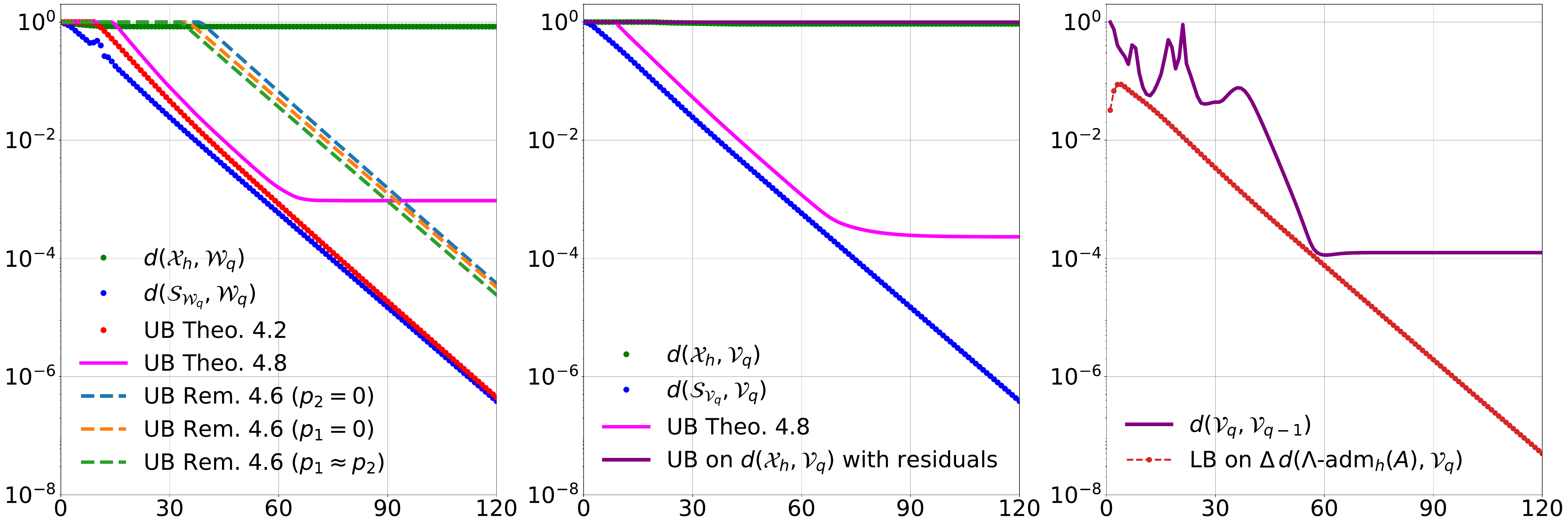}
        \caption{Exponential decay model with $\delta\approx10^{-3}$ and $\gamma\approx1$}
    \label{fig: 1}
\end{figure}

\begin{figure}[htb!]
     \centering
     \includegraphics[width=0.80\textwidth]{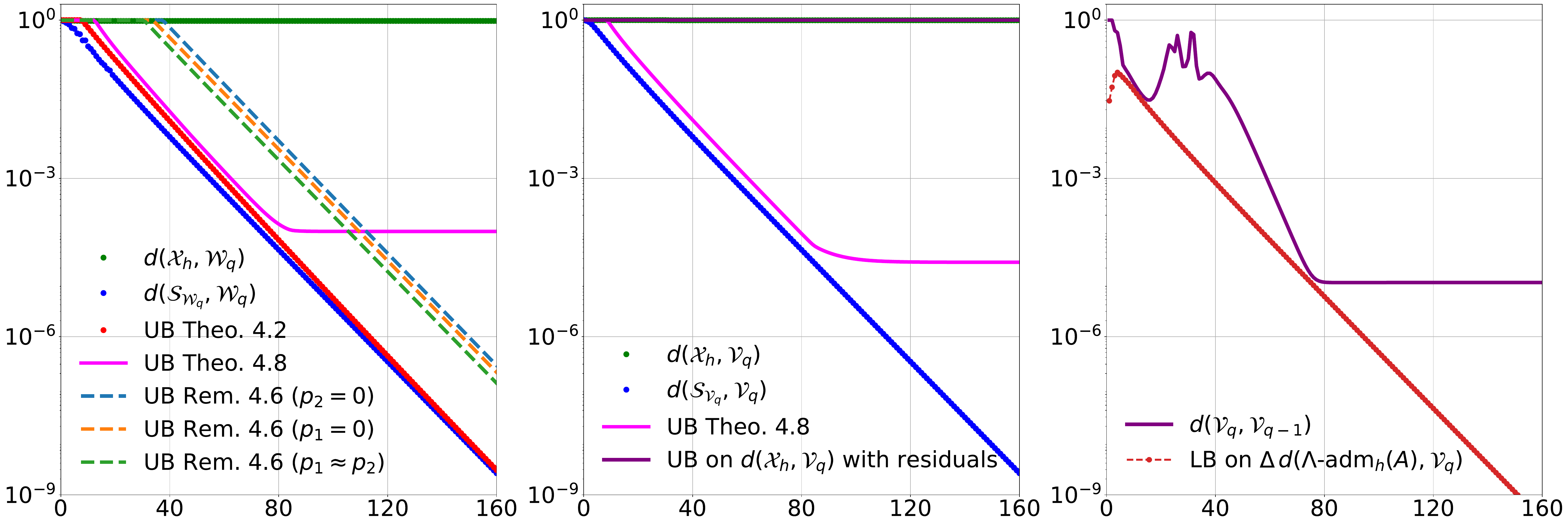}
        \caption{Exponential decay model with $\delta\approx10^{-4}$ and $\gamma\approx1$}
    \label{fig: 2}
\end{figure}

\begin{figure}[htb!]
     \centering
     \includegraphics[width=0.80\textwidth]{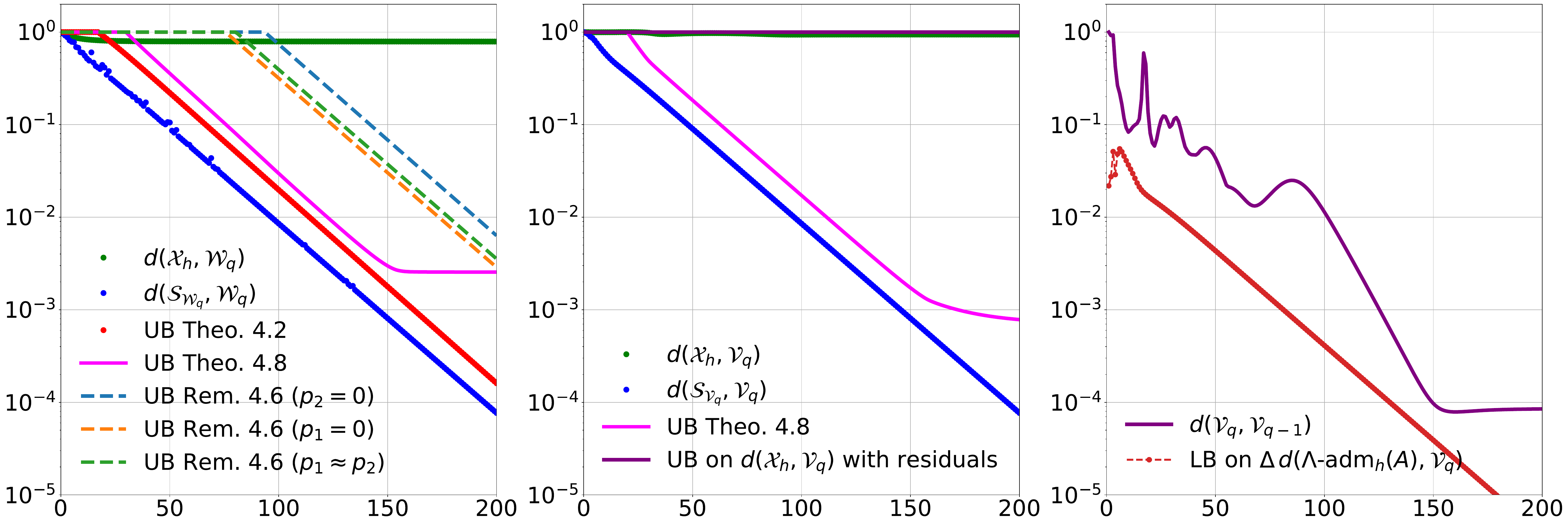}
       \caption{Exponential decay model with $\delta\approx10^{-3}$ and $\gamma\approx0.4$}
    \label{fig: 3}
\end{figure}

\smallskip
\noindent {\bf Figures 1-3}: we apply the Subspace Iteration Method (SIM) to $A$ i.e., we set $R(W)=\cW_0$ and compute $\cW_q=R(A^{q}W)=A^q(\cW_0)$ for different values of $q$.
The decay of $\lambda(A)$ outside the cluster is exponential, exhibiting different spreads $\delta$ and gaps $\gamma$, as indicated in the captions.

\smallskip
\noindent {\bf First plot in Figures 1-3:} 
we compute the values $d(\cX_{10},\cW_q)$ for different values of $q$. The plots show that these values  do not decrease at a sufficiently fast speed. Notice that the classical strategy for the convergence analysis of the SIM is to bound from above $d(\cX_{10},\cW_q)$; since the actual values of $d(\cX_{10},\cW_q)$
stay close to $\sin(1)\approx0.84$ in these numerical examples, this approach 
does not seem convenient for the present setting.

Following the proof of Theorem \ref{teo dist a adm2} (see Section \ref{Sec6.1} below) we construct a convenient $\cS_{\cW_q}\in\Lambda\text{-adm}_{10}(A)$ to obtain numerical estimates for $d(\Lambda\text{-adm}_{10}(A),\,\cW_q)\leq d(\cS_{\cW_q},\cW_q)$ and to test the inequality $d(\cS_{\cW_q},\cW_q)\leq d(\cX_5,\cW_q)+d(\cX_{30},\cW_q)$ obtained in the same proof. 
We also plot the upper bounds 
$d(\Lambda\text{-adm}_{10}(A),\cW_q)$
obtained from Theorem \ref{teo se aprox a admis} (see Remark \ref{rem se aproxima a admis}), for different choices of $p_1$ and $p_2$ (so that $p_1+p_2\approx r-h=20-10=10$). In these numerical examples, the previously mentioned distances decay exponentially (though the upper bounds from Theorem \ref{teo se aprox a admis} tend to take some iterations before becoming informative).
The speed at which the corresponding values decay has a dependence on the gaps (i.e. on $\gamma$): the larger the gaps are, the faster these values decay.

Finally, we apply the upper bound for $d(\Lambda\text{-adm}_{20}(A),\cW_q)$ (i.e. setting $\cQ=\cW_q$) in Eq. \eqref{eq estim dis amd3} from Theorem \ref{teo cuali porRR y admis1}. 
Since the values of $d(\cX_{10},\cW_q)$ stay close to 1
(see  Remark \ref{rems sobre cotas con residuos}), our approach in this setting is limited by the lower threshold $0<\gamma^{-1}\cdot \delta$ (the values of our upper bounds stabilize around this quantity). On the other hand, the numerical examples show that during an initial number of iterations, the upper bound in Eq. \eqref{eq estim dis amd3} outperforms the upper bounds from Theorem \ref{teo se aprox a admis}. A possible reason for this behavior is that the proof of Theorem \ref{teo se aprox a admis} includes several arguments considering {\it worst case scenarios}, while the upper bound in Eq. \eqref{eq estim dis amd3} involves the computation of some concrete values associated with (partitions of) $A$. 
In the long run, Theorem \ref{teo se aprox a admis} seems to provide better estimates (as seen in Figures 1-3). Thus, these results can be considered complementary. 

\smallskip
\noindent {\bf Second plot in Figures 1-3:} we apply the Rayleigh-Ritz method to the matrix $A$ and the subspaces $\cW_q$, and compute the $10$-dimensional dominant eigenspace of the compression $P_{\cW_q}A P_{\cW_q}$, denoted by $\cV_q$, for different numbers of iterations $q$.
First, we compute the distance $d(\cX_{10},\cV_q)$. In these numerical examples, these values remain close to 1; hence, the classical approach of bounding them from above (as a measure of the quality of $\cV_q$) does not seem a good strategy in our present (clustered) setting. 
We also compute the upper bounds for $d(\cX_{10},\cV_q)$ obtained in \cite{Nakats}, which cannot be informative, since they provide upper bounds for $d(\cX_{10},\cV_q)\approx 1$.
On the other hand, we computed two upper bounds for the distance $d(\Lambda\text{-adm}_{10}(A),\cV_q)\,$. We computed the upper bound in Eq. \eqref{eq estim dis amd1} from Theorem \ref{teo cuali porRR y admis1} (in this setting $\cQ=\cW_q$ so $\cV_q=R(\widehat X_1)$) and we also applied the constructive approach in the proof of Theorem \ref{teo dist a adm2} to produce a convenient $ \cS_{\cV_q}\in \Lambda\text{-adm}_{10}(A)$ and plotted the values of 
$d(\cS_{\cV_q},\cV_q)$ for different values of $q$. Since distance $d(\cX_{10},\cV_q)$ does not decrease sufficiently fast as a function of $q$,  the upper bound in Eq. \eqref{eq estim dis amd1} suffers from the threshold $0<\gamma^{-1}\cdot \delta$ (which can be explained using an argument analogous to that in Remark \ref{rems sobre cotas con residuos}).
Notice that despite the behavior of the upper bound in Eq. \eqref{eq estim dis amd1}, the values $d(\cS_{\cV_q},\cV_q)$ decay exponentially.

\smallskip
\noindent {\bf Third plot in Figures 1-3:}  the curves in these plots correspond to a rather heuristic analysis, as follows: consider  $\{\cV_q\}_q$ as a finite sequence of points (i.e. a walk) in the Grassmannian $\cG_{3000,\,10}(\R)$. The length between consecutive points is given by $d(\cV_{q-1},\cV_q)$.
If when moving from $\cV_{q-1}$ to $\cV_q$ the points become closer to $\cX_{10}$ then the reduction of the distances
$d(\cX_{10},\cV_{q-1})-d(\cX_{10},\cV_{q})$ is at most the length of the step $d(\cV_{q-1},\cV_q)$. Formally,
    \begin{equation}\label{eq paso del método 1}
                d(\cX_{10},\,\cV_{q-1})
        -
        d(\cX_{10},\,\cV_{q})
                \leq
        d(\cV_{q-1},\,\cV_{q})
        \,.
    \end{equation}
Similarly, the decrease in the distance 
to the class of $\Lambda$-admissible subspaces also satisfies

\begin{equation}\label{eq paso del método 2}
        d(\Lambda\text{-adm}_{10}(A),\,\cV_{q-1})
        -
        d(\Lambda\text{-adm}_{10}(A),\,\cV_{q})
        \leq
        d(\cV_{q-1},\,\cV_{q})
        \,.
    \end{equation}
  Unfortunately, it is not possible to compute the exact value of the distances to the class of $\Lambda$-admissible subspaces considered above; nevertheless, since $\dim \cV_q=10$ for every $q$, it is possible to obtain the following lower bound: for any  $\cS\in\Lambda\text{-adm}_{10}(A)$: 
  \begin{equation}\label{eq paso del método 3}
        d(\Lambda\text{-adm}_{10}(A),\,\cV_{q-1})
        -
        d(\Lambda\text{-adm}_{10}(A),\,\cV_{q})
        \geq
        \max\{d(\cX_5,\,\cV_{q-1}),\;d(\cX_{30},\,\cV_{q-1})\}
        -
        d(\cS,\,\cV_{q})
        \,.
    \end{equation}
We present a formal verification of these inequalities in Remark \ref{rem pruebas desigualdades de pasos}. In a {\it metric sense}, the approach based on the class 
$\Lambda\text{-adm}_{10}(A)$ is (numerically) efficient 
if the difference
$d(\Lambda\text{-adm}_{10}(A),\,\cV_{q-1})-d(\Lambda\text{-adm}_{10}(A),\,\cV_{q})$ is (uniformly) proportional to the length of the step $d(\cV_{q-1},\,\cV_{q})$. 

We plot the values of $d(\cV_{q-1},\,\cV_{q})$ for different values of $q$: in all numerical examples these values have two regimes into which they have quite different behaviors. In an initial number of iterations, the values decay exponentially, until they reach a value of the order of $10^{-1}\,\delta$; then, the values stabilize around  $10^{-1}\,\delta$. That is, after an initial number of iterations where the values decay, then the length of the step  $d(\cV_{q-1},\,\cV_{q})$ becomes essentially constant. In particular, the (numerical, computed) sequence $\{\cV_q\}$ does not converge to any fixed subspace (for practical purposes). On the other hand, these last facts are compatible with the situation in which, after an initial number of iterations,  
the subspaces $\cV_q$ lie in a spiral trajectory that is getting closer to {\it the class} $\Lambda\text{-adm}_{10}(A)$. During the initial number of iterations in which the values of $d(\cV_{q-1},\,\cV_{q})$ decay exponentially, the values of the lower bound in Eq. \eqref{eq paso del método 3} are quite close to those of $d(\cV_{q-1},\,\cV_{q})$. Afterwards, the values of the lower  bound in Eq. \eqref{eq paso del método 3} 
keep decaying exponentially, which is consistent with the fact that 
$\cV_q$ keeps getting closer to $\Lambda\text{-adm}_{10}(A)$ as the number $q$ of iterations increases. 
We also point out that the inequality from Eq \eqref{eq paso del método 1} was numerically tested and it does not provide a sharp estimation. Indeed, the values $d(\cX_{10},\,\cV_{q-1}) - d(\cX_{10},\,\cV_{q})$ oscillated between being a few order of magnitude lower than $d(\cV_{q-1},\,\cV_{q})$ and being negative.

\smallskip
\noindent {\bf Figures 4-5}: we apply the Krylov Subspace Method  to $A$ i.e., we consider the matrix $W$, and compute the Krylov space $\cK_q=R([A^qW \ \ldots \ AW \ W])\subset \R^n$ for different values of $q$. Both examples correspond to matrices $A$ whose eigenvalues  have spread $\delta\approx10^{-3}$ and gaps $\gamma\approx0.4$, but different decays outside the cluster: in Figure 4 we consider an exponential decay model, while in Figure 5 we consider a (more challenging) linear decay model.

Notice that, due to our choice of parameters, the dimension of the Krylov subspaces rapidly surpasses the enveloping index $k$ (indeed, $\dim\cK_1$ is expected to be $2\times r = 40>30=k$). Also, estimations from Eqs. \eqref{eq estim dis amd2} and \eqref{eq estim dis amd3} in Theorem \ref{teo cuali porRR y admis1} prove useful when one has a subspace $\cQ$ which has an associated residual $R$ with small norm (which can be interpreted as $\cQ$ being close to being invariant). Since one does not expect the increasing sequence $\cK_q$ to produce these small residuals, we adopt the following approach in our analysis: we apply the Rayleigh-Ritz method to the matrix $A$ and the subspaces $\cK_q$, and compute both the $20$-dimensional and the $10$-dimensional dominant eigenspaces of the compression $P_{\cK_q}A P_{\cK_q}$, denoted by $\cW_q$ and $\cV_q$ respectively, for different numbers of iterations $q$ (notice that $\cV_q\subseteq\cW_q\subseteq\cK_q$). Then, as before, we consider $\cQ=\cW_q$ and $\cV_q=R(\widehat X_1)$.

\smallskip
\begin{figure}[htb!]
     \centering
     \includegraphics[width=0.80\textwidth]{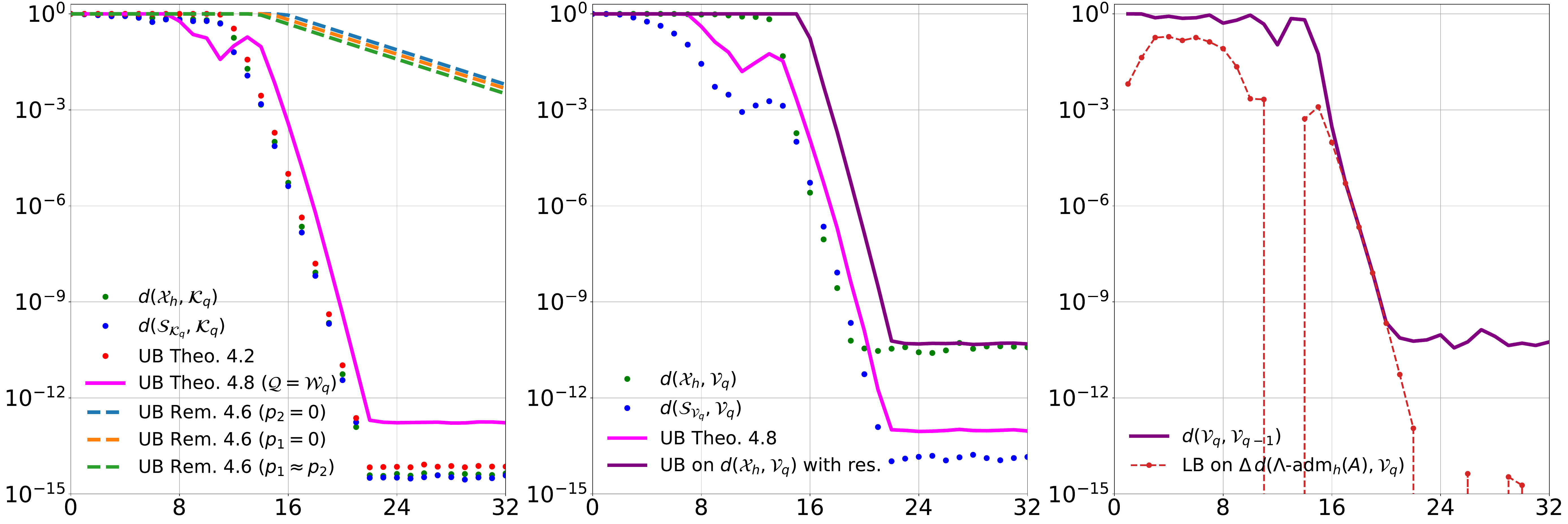}
        \caption{Exponential decay model with $\delta\approx10^{-3}$ and $\gamma\approx1$}

    \label{fig: 4}
\end{figure}

\begin{figure}[htb!]
     \centering
     \includegraphics[width=0.80\textwidth]{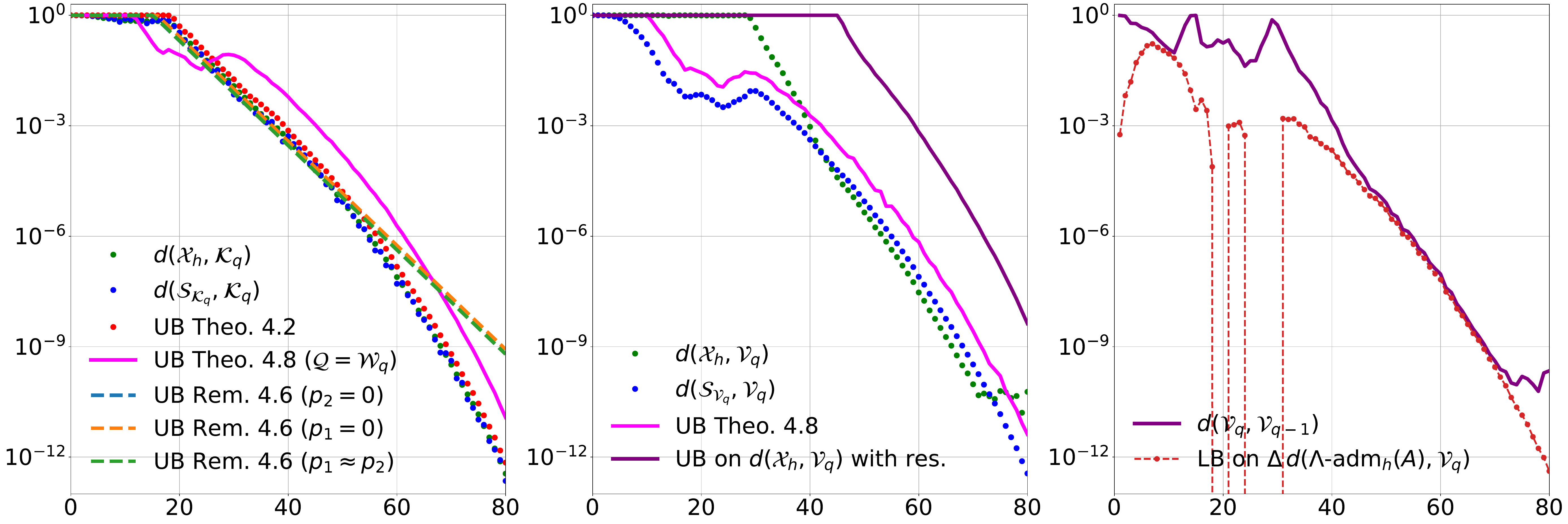}
      \caption{Linear decay model with $\delta\approx10^{-3}$ and $\gamma\approx1$}
    \label{fig: 5}
\end{figure}

\smallskip
\noindent {\bf First plot in Figures 4-5:}
as in the first plot of the previous figures, we compute $d(\cX_{10},\cK_q)$ and we follow the proof of Theorem \ref{teo dist a adm2} (see Section \ref{Sec6.1} below) to construct a convenient $\cS_{\cK_q}\in \Lambda\text{-adm}_{10}(A)$ and obtain numerical estimates for $d(\Lambda\text{-adm}_{10}(A),\,\cK_q)\leq d(\cS_{\cW_q},\cK_q)\leq d(\cX_5,\cK_q)+d(\cX_{30},\cK_q)$. 
For these examples, we got $\dim\cK_1\geq 30$ and thus $d(\Lambda\text{-adm}_{10}(A),\,\cK_q)\leq d(\cX_{h},\cK_q)\leq d(\cX_{30},\,\cK_q)$. Since $\lambda_{30}-\lambda_{31}$ is significant, these distances decrease exponentially.

We also plot the upper bounds 
$d(\Lambda\text{-adm}_{10}(A),\cK_q)$
obtained from Theorem \ref{teo se aprox a admis} (see Remark \ref{rem aplic Krylov}), for different choices of $p_1$ and $p_2$ (so that $p_1+p_2\approx r-h=20-10=10$). As mentioned in Remark \ref{rem aplic Krylov}, in case of the Krylov sequence Theorem \ref{teo se aprox a admis} could potentially provide many different useful upper bounds for the decay of $d(\Lambda\text{-adm}_{10}(A),\cK_q)$ (via the choice of different polynomials $\phi$, provided that $\phi(\Lambda_k)$ is invertible). 
Here, we intend to take advantage of the well known separation properties of the Chebyshev polynomials \cite{AZMS,D19,MM15} to produce informative bounds. 
Concretely, for a previously fixed pair $p_1,\,p_2$ as we described, let $\phi_\ell(x)=T_\ell(\frac{x-\lambda_{3000}}{\lambda_{30+p_2+1}-\lambda_{3000}})$, where $T_\ell$ denotes the Chebyshev polynomial of the first kind of degree $\ell$. 
Using the properties of $T_\ell$ listed in \cite[Section 5]{AZMS} one can notice, for example, that $\phi_\ell$ has degree $\ell$, $\phi_\ell(\lambda_{30+p_2+1})=1$ and that $\phi_\ell$ is monotonically increasing in $[\lambda_{30+p_2+1},\,\infty)$. 
These facts imply that $\phi_\ell(\Lambda_k)$ is invertible, and thus we can compute the associated bounds from \ref{rem aplic Krylov} exploiting the properties of $T_\ell$ listed in \cite[Section 5]{AZMS}.

Finally, we apply the upper bound for $d(\Lambda\text{-adm}_{20}(A),\cW_q)$ (i.e. setting $\cQ=\cW_q$) in Eq. \eqref{eq estim dis amd3} from Theorem \ref{teo cuali porRR y admis1}. 
Since the values of $d(\cX_{10},\cW_q)$ decrease exponentially as the number of iterations increases,  the upper bound in Eq. \eqref{eq estim dis amd3} does not suffer from the threshold $\gamma^{-1}\cdot \delta$; these estimates typically outperform the estimates from Theorem \ref{teo se aprox a admis} above in these examples, until they reach a threshold (and stabilize). Also, it can be easily shown that $d(\Lambda\text{-adm}_{20}(A),\cK_q)\leq d(\Lambda\text{-adm}_{20}(A),\cW_q)$, which is why we include this curve here.

\smallskip
\noindent {\bf Second plot in Figures 4-5:}
first, we compute the distance $d(\cX_{10},\cV_q)$ and apply the constructions in the proof of Theorem \ref{teo dist a adm2} to produce $\cS_{\cV_q}\in \Lambda\text{-adm}_{10}(A)$ and compute $d(\cS_{\cV_q},\cV_q)$. In these numerical examples, both of these values decay exponentially. 
We then set $\cQ=\cW_q$ and $\cV_q=R(\widehat X_1)$ and compute the upper bounds for these Ritz spaces: The upper bound obtained in \cite{Nakats} for $d(\cX_h,\cV_q)$ and the upper bound for $d(\Lambda\text{-adm}_{10}(A),\cV_q)$ in Eq. \eqref{eq estim dis amd1} from Theorem \ref{teo cuali porRR y admis1}. 

We remark that the (discrepancy) quotient between the upper bound derived from \cite{Nakats} and the upper bound in Eq. \eqref{eq estim dis amd1} is $\delta^{-1}\approx 10^3$; indeed, by \cite[Theorem 5.1]{Nakats}, 
 $\|\sin(\cX_{10},\widehat X_1 )\|_2\leq \frac{\|R\|_2}{{\rm Gap}}(1+\frac{\|R_2\|^2}{{\rm gap}^2})$, where ${\rm Gap}=\min|\lambda(\Lambda_1)-\lambda(A_3)|\approx \la_{10}-\la_{31}\geq \la_{30}-\la_{31}$,  ${\rm gap}=\min|\lambda(\Lambda_1)-\lambda(\widehat \Lambda_2)|\approx \delta=10^{-3}$ and $\|R\|_2\approx \|R_2\|_2$,  in these examples. 
 Thus, the upper bound for $d(\cX_h,\cV_q)$ in \cite[Theorem 5.1]{Nakats} is affected  by the spread of the cluster $\delta$ (by the factor $\delta^{-1}$), while our estimates for $d(\Lambda\text{-adm}_{10}(A),\cV_q)$ are not.

\smallskip
\noindent {\bf Third plot in Figures 4-5:}  the curves in these third plots correspond to the same heuristic analysis as that described for the Third plot in Figures 1-3.

 \begin{rem}[Proofs of Equations \eqref{eq paso del método 1}-\eqref{eq paso del método 3}] \label{rem pruebas desigualdades de pasos}
      
Given any subspace $\cT\in\cG_{3000,\,10}$, the triangle inequality for $d(\cT,\cV)=\sin(\theta_{\max}(\cT,\cV))$ (see \cite{Nere, QZL05}) implies that $|\;
        d(\cT,\,\cV_{q-1})
        -
        d(\cT,\,\cV_{q})
        \;|
        \leq
        d(\cV_{q-1},\,\cV_{q})$,
     and taking $\cT=\cX_{10}$ gives us Eq. \eqref{eq paso del método 1}. Next, given $q\geq 1$ consider a subspace $\cS_{q}\in\Lambda\text{-adm}_{10}(A)$ such that $d(\Lambda\text{-adm}_{10}(A),\,\cV_{q})=d(\cS_{q},\,\cV_{q})$.
    Then, by the triangle inequality once more,
    \[
    d(\Lambda\text{-adm}_{10}(A),\,\cV_{q-1})
    -
    d(\Lambda\text{-adm}_{10}(A),\,\cV_{q})
    \leq
    d(\cS_{q},\,\cV_{q-1})
    -
    d(\cS_{q},\,\cV_{q})
    \leq
    d(\cV_{q-1},\,\cV_{q})
    \,,
    \]
    \noindent which gives Eq. \eqref{eq paso del método 2}. Finally, Eq. \eqref{eq paso del método 3} is a simple consequence of Remark \ref{rem cota inferior}.
\end{rem}
    
\section{Several proofs }\label{appendix}
In this section, we include the proofs of several results considered in the previous sections. We first recall the general notation that we have used so far
\begin{nota}\label{nota sec 6}
    Let $\K=\R$ or $\K=\C$. We consider: 
   
\smallskip

\noindent 1. A self-adjoint matrix $A\in\K^{n\times n}$ with an eigendecomposition given by $A=X\,\Lambda X^*$. The eigenvalues of $A$ are given by $\lambda(A)=(\lambda_1,\ldots,\lambda_n)\in (\R^n)\da$, counting multiplicities and arranged non-increasingly and we set $\lambda_0:=\infty$. The columns of $X$ are denoted by $x_1,\ldots, x_n\in \K^n$, respectively. We further consider $\cX_l=\overline{\{x_1,\ldots,x_l\}}$, for $1\leq l\leq n$\,.

\smallskip

\noindent 2.  For a target dimension $1\leq h$, we consider (enveloping) indices $0\leq j<h<k$ such that $\lambda_j>\lambda_{j+1}$ and $\lambda_k>\lambda_{k+1}$. Notice that $\cX_j$ and $\cX_k$ are unique and independent of $X$ in the present setting. We use the indices $j$ and $k$ for the $\Lambda$-admissible subspaces of $A$ and set 
    $$\Lambda\text{-adm}_h(A)=
    \{\, \cS :\, \cS
    \text{ is an $h$-dimensional $\Lambda$-admissible space of $A$} \,\}\,.$$
\end{nota}

\begin{proof}[Proof of Eq. \eqref{eq dif norm opt aprox} in Remark \ref{rem sobre admis subs1}]
consider Notation \ref{nota sec 6} and assume further that $\la_k\geq 0$. Let $\cS\in \Lambda\text{-adm}_h(A)$:
then, the low-rank approximation $P_{\cS} AP_{\cS}$ obtained by compressing $A$ onto $\cS$ satisfies that it is positive semi-definite and
\begin{equation}\label{eq dif norm opt aprox2}
    \uniinv{A-P_{\cS} AP_{\cS}}\leq \uniinv{A-A_h}+
    \|{\text{diag}(\underbrace{\delta,\ldots,\delta}_{k-j},0,\ldots,0)}\|\,,
\end{equation}
where $\delta=\la_{j+1}-\la_k\geq 0$ is the spread of the cluster and $\uniinv{\cdot}$ denotes a u.i.n. In particular, 
$\|A-P_{\cS} AP_{\cS}\|_2\leq \|A-A_h\|_2+\delta=\lambda_{h+1}+\delta$.
To prove Eq. \eqref{eq dif norm opt aprox2} we 
first show that
\begin{equation}\label{equac la admis buen aprox lr}
(| \lambda_i(A- P_{\cS} AP_{\cS})|)_{i=1}^n\prec_w  
(\underbrace{0,\ldots,0}_{j},\, |\la_{j+1}|,\ldots,|\la_{k-h+j}| \, ,\, \underbrace{\delta,\ldots,\delta}_{h-j},|\la_{k+1}|,\ldots,|\la_n|)\da \, .   
\end{equation}
Indeed, consider the representation
$
A-P_\cS A P_\cS= P_{\cX_k} AP_{\cX_k} -P_\cS A P_\cS + (I-P_{\cX_k}) A (I-P_{\cX_k})$. 
Since $\cX_j\subset \cS\subset \cX_k$ then $P_{\cX_j}P_\cS=P_{\cX_j}$ and hence, 
\begin{equation}\label{eq desc espec por cachos1}
\lambda(A-P_\cS A P_\cS)= (\underbrace{0,\ldots,0}_{j},\, \lambda((P_{\cX_k} AP_{\cX_k} -P_\cS A P_\cS )|_{\cX_k\ominus \cX_j})\, , \, \lambda((I-P_{\cX_k}) A )|_{\cX_k^\perp}))^\downarrow \,.
\end{equation}
On the one hand, we can apply Weyl's inequality for selfadjoint matrices and get
\begin{eqnarray*}
\lambda((P_{\cX_k} AP_{\cX_k} -P_\cS A P_\cS )|_{\cX_k\ominus \cX_j})&\prec& \lambda(P_{\cX_k} AP_{\cX_k} |_{\cX_k\ominus \cX_j})- 
\lambda(P_\cS A P_\cS |_{\cX_k\ominus \cX_j})^\uparrow \\ &=& (\, (\la_{i+j} )_{i=1}^{k-h}\, ,\, (\la_{k-h+j+i} - \la_{h-j-i+1}(P_\cS A P_\cS |_{\cX_k\ominus \cX_j}))_{i=1}^{h-j} )\,.
\end{eqnarray*}
Using that $f(x)=|x|$, $x\in \R$ is a convex function and the properties of majorization \cite[Corollary II.3.4]{Bhatia}, we see that 
$$
|\lambda((P_{\cX_k} AP_{\cX_k} -P_\cS A P_\cS )|_{\cX_k\ominus \cX_j})|\prec _w  
(\, (\la_{i+j} )_{i=1}^{k-h}\, ,\, ( \, | \, \la_{k-h+j+i} - \la_{h-i+1}(P_\cS A P_\cS) \, |\, )_{i=1}^{h-j} ) \,.
$$
Using the interlacing inequalities in Remark \ref{rem sobre admis subs0}, we see that
$$
( \, | \, \la_{k-h+j+i} - \la_{h-i+1}(P_\cS A P_\cS) \, |\, )_{i=1}^{h-j} \leq   
( \, \max\{ |\la_{k-h+j+i}- \la_{k-i+1}|,  |\la_{k-h+j+i}- \la_{h-i+1}|\} \, )_{i=1}^{h-j} \,.$$
Since for $1\leq i\leq h-j$ we have that $j+1\leq k-h+j+i,\,h-i+1,\,k-i+1\leq k$, then
$$
\max\{ \ |\la_{k-h+j+i}- \la_{k-i+1}|,  |\la_{k-h+j+i}- \la_{h-i+1}| \ , \ \ 1\leq i\leq h-j\  \} \leq \la_{j+1}-\lambda_k=\delta \,.
$$
The previous facts show that
$$
|\lambda((P_{\cX_k} AP_{\cX_k} -P_\cS A P_\cS )|_{\cX_k\ominus \cX_j})|\prec _w  (\, |\la_{j+1}|,\ldots,|\la_{k-h+j}| \, ,\, \underbrace{\delta,\ldots,\delta}_{h-j})\,.
$$
This last fact, together with Eq. \eqref{eq desc espec por cachos1} and the properties of block submajorization of vectors with non-negative entries prove Eq. \eqref{equac la admis buen aprox lr}.
In particular, 
 we get that
$$
\uniinv{A-P_{\cS} AP_{\cS}}\leq 
\|{\text{diag}(\underbrace{0,\ldots,0}_{j},\, |\la_{j+1}|,\ldots,|\la_{k-h+j}| \, ,\, \underbrace{\delta,\ldots,\delta}_{h-j},|\la_{k+1}|,\ldots,|\la_n|)}\|\,.
$$
Using that $|\ |\la_{j+i}|-|\la_{h+i}|\ |\leq |\la_{j+i}-\la_{h+i} |\leq \delta$, for $1\leq i\leq k-h$, the triangle inequality for $\uniinv{\cdot}$ and the entry-wise monotonicity of $\uniinv{\cdot}$ for vectors of non-negative entries, we now see that
\begin{eqnarray*}
& &
\|{\text{diag}(\underbrace{0,\ldots,0}_{j},\, |\la_{j+1}|,\ldots,|\la_{k-h+j}| \, ,\, \underbrace{\delta,\ldots,\delta}_{h-j},|\la_{k+1}|,\ldots,|\la_n|)
}\| \leq  \\ & &
\|{\text{diag}(\underbrace{0,\ldots,0}_{j}\,  \underbrace{\delta,\ldots,\delta}_{k-h} \, ,\, \underbrace{\delta,\ldots,\delta}_{h-j},\underbrace{0,\ldots,0}_{n-k})
}\|+\uniinv{A-A_h}
\end{eqnarray*}
since 
$
\uniinv{A-A_h}=
\|{\text{diag}(\underbrace{0,\ldots,0}_{j},\,|\la_{h+1}|,\ldots,|\la_{k}| \, ,\, \underbrace{0,\ldots,0}_{h-j},|\la_{k+1}|,\ldots,|\la_n|)}\|$.
\end{proof}

\subsection{Proof of Theorem \ref{teo dist a adm2}}\label{Sec6.1}

We first consider some results that will allow us to prove Theorem \ref{teo dist a adm2}. The following lemma improves \cite[Proposition 5.3]{Massey2025}.

\begin{lem}\label{lem acot sum subs}
Let $(\cT',\,\cT'')$ and $(\cH',\,\cH'')$ be pairs of subspaces in $\K^n$, such that $\dim(\cH')\leq\dim(\cT')$, $\dim(\cH'')\leq \dim(\cT'')$ and $\cH'\bot\cH''$. Let $\cH:=\cH'\oplus \cH''$ and consider a subspace $\cT\subseteq\K^n$ such that $\cT',\,\cT''\subseteq\cT$ and $\dim(\cH)\leq\dim(\cT)$. In this case, we have that 
$$
\uniinv{\sin \Theta(\cT,\cH)}\leq \uniinv{\sin \Theta(\cT',\cH')}+\uniinv{\sin \Theta(\cT'',\cH'')}
$$
\noindent for every u.i.n. $\uniinv{\cdot}$. Moreover, for the operator and Frobenius norms $\|\cdot\|_{2,F}$ we also have that
$$
\|\sin \Theta(\cT,\cH)\|_{2,F}^2
\leq
\|\sin \Theta(\cT',\cH')\|_{2,F}^2
+
\|\sin \Theta(\cT'',\cH'')\|_{2,F}^2\,.
$$
\end{lem}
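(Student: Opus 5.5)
The plan is to write everything in terms of the $n\times n$ matrices $(I-P_\cT)P_\cH$, using Eq. \eqref{eq sobre sen ang princ1}. Since $\dim\cH\leq\dim\cT$, the nonzero singular values of $(I-P_\cT)P_\cH$ are exactly the entries of $\sin\Theta(\cT,\cH)$, that is, the sines of the principal angles of $\cH$ relative to $\cT$. As usual, a u.i.n. of the diagonal matrix of sines is identified with the u.i.n. of this $n\times n$ matrix, padding with zeros; the same convention is used for the pairs $(\cT',\cH')$ and $(\cT'',\cH'')$. The claim then becomes
$$\uniinv{(I-P_\cT)P_\cH}\leq \uniinv{(I-P_{\cT'})P_{\cH'}}+\uniinv{(I-P_{\cT''})P_{\cH''}}.$$
The hypotheses $\dim\cH'\leq\dim\cT'$ and $\dim\cH''\leq\dim\cT''$ are exactly what make the right-hand side terms equal to the given angle quantities.

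First I decompose the matrix. Since $\cH'\perp\cH''$, we have $P_\cH=P_{\cH'}+P_{\cH''}$, so
$$M:=(I-P_\cT)P_\cH=M_1+M_2,\qquad M_1=(I-P_\cT)P_{\cH'},\quad M_2=(I-P_\cT)P_{\cH''}.$$
Next I use monotonicity. Since $\cT'\subseteq\cT$, we have $I-P_\cT=(I-P_\cT)(I-P_{\cT'})$. Hence
$$\uniinv{M_1}\leq\|I-P_\cT\|_2\,\uniinv{(I-P_{\cT'})P_{\cH'}}\leq\uniinv{(I-P_{\cT'})P_{\cH'}},$$
and similarly $\uniinv{M_2}\leq\uniinv{(I-P_{\cT''})P_{\cH''}}$. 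The triangle inequality for $\uniinv{\cdot}$ then gives the first inequality.

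For the squared version, the key observation is the orthogonality
$$M_1M_2^*=(I-P_\cT)P_{\cH'}P_{\cH''}(I-P_\cT)=0,$$
so $MM^*=M_1M_1^*+M_2M_2^*$. For the Frobenius norm this gives
$$\|M\|_F^2=\operatorname{tr}(MM^*)=\|M_1\|_F^2+\|M_2\|_F^2.$$
For the spectral norm it gives
$$\|M\|_2^2=\|MM^*\|_2\leq\|M_1\|_2^2+\|M_2\|_2^2.$$
Combining each of these with the monotonicity bounds of the previous step yields the second claim.

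The only delicate point is bookkeeping rather than estimation. One must check that the padded-diagonal convention for $\sin\Theta$ matches the singular values of the $n\times n$ products. This matters especially when $\dim\cH'<\dim\cT'$ or when some of the subspaces are trivial, where by convention the angle vector is $0$. With the order of arguments as in Eq. \eqref{eq sobre sen ang princ1} (smaller subspace acted on by $I-P$ of the larger one), this matching is immediate.
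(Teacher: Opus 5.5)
Your proposal is correct and is essentially the paper's argument. You split $P_\cH=P_{\cH'}+P_{\cH''}$, apply the triangle inequality together with monotonicity of principal angles (which you make explicit through $I-P_\cT=(I-P_\cT)(I-P_{\cT'})$), and for $\|\cdot\|_{2,F}$ use $M_1M_2^*=0$, which is precisely the paper's remark that $P_{\cH'}(I-P_\cT)$ and $P_{\cH''}(I-P_\cT)$ have orthogonal ranges.
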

\begin{proof}
As usual, we compute the sines of the principal angles in terms of singular values of products of projections. Using that $ P_\cH= P_{\cH'}+ P_{\cH''}$ and the monotony of the principal angles
we have that 
\begin{eqnarray*}
    \uniinv{\sin \Theta(\cT,\cH)}
    &=&
    \uniinv{( I- P_\cT)  ( P_{\cH'}+ P_{\cH''})}
    \leq 
    \uniinv{( I- P_\cT)  P_{\cH'}}
    +
    \uniinv{( I- P_\cT)  P_{\cH''}}
    \\
    &=&
    \uniinv{\sin \Theta(\cT,\cH')}
    +
    \uniinv{\sin \Theta(\cT,\cH'')}
    \leq 
    \uniinv{\sin \Theta(\cT',\cH')}
    +
    \uniinv{\sin \Theta(\cT'',\cH'')}
    \,.
    \end{eqnarray*}
    For the cases of $\|\cdot\|_2$ and $\|\cdot\|_F$,
the same steps can be followed, but the first inequality now uses the fact that the ranges of $P_{\cH'}(I-P_\cT)$ and $P_{\cH''}(I-P_\cT)$ are orthogonal subspaces.
\end{proof}

In the following results, we consider Notation \ref{nota sec 6}.

\begin{pro}\label{pro dist a adm 2}
Let $\cT\subset \cX_k$ be such that $t:=\dim \cT$ satisfies $h\leq t\leq k$.
Then, there exists $\cS\in \Lambda\text{-adm}_h(A)$, such that 
\begin{equation}\label{eq ang iguales 2}
    \uniinv{\sin \Theta(\cS,\cT)}
    =
    \uniinv{\sin \Theta(\cX_j,\cT)}
    \,.
\end{equation}
\end{pro}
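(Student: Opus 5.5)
The plan is to construct $\cS$ explicitly by adjoining to $\cX_j$ a suitable piece of $\cT$ itself, so that the "new" directions of $\cS$ contribute no angle with $\cT$. I will take $\cS=\cX_j\oplus\cD$ with $\cD\subseteq\cT\cap\cX_j^\perp$ and $\dim\cD=h-j$.

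First, $\cD$ exists. Since $\dim\cX_j^\perp=n-j$, we have
$$
\dim(\cT\cap\cX_j^\perp)\geq t+(n-j)-n=t-j\geq h-j .
$$
So we can pick any subspace $\cD\subseteq\cT\cap\cX_j^\perp$ with $\dim\cD=h-j$. Because $\cT\subseteq\cX_k$, we get $\cD\subseteq\cX_k\cap\cX_j^\perp=\cX_k\ominus\cX_j$. Hence $\cS:=\cX_j\oplus\cD$ satisfies $\cX_j\subseteq\cS\subseteq\cX_k$ and $\dim\cS=h$, that is, $\cS\in\Lambda\text{-adm}_h(A)$. In the case $j=0$ this simply gives $\cS=\cD\subseteq\cT$.

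Second, compute the angles. Since $\cD\perp\cX_j$, we have $P_\cS=P_{\cX_j}+P_\cD$. Since $\cD\subseteq\cT$, we have $(I-P_\cT)P_\cD=0$. Therefore
$$
(I-P_\cT)P_\cS=(I-P_\cT)P_{\cX_j}.
$$
By Eq. \eqref{eq sobre sen ang princ1}, the sines of the principal angles $\sin\theta_i(\cS,\cT)$, $1\leq i\leq h$, are the first $h$ singular values of this operator. The same holds for $\sin\theta_i(\cX_j,\cT)$, $1\leq i\leq j$. Consequently the vector $\sin\theta(\cS,\cT)$ coincides with $\sin\theta(\cX_j,\cT)$ padded with $h-j$ zeros, up to rearrangement. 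Any unitarily invariant norm sees these two vectors identically, which gives Eq. \eqref{eq ang iguales 2}.

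The only point requiring care is this last step, namely the convention for comparing norms of $\Theta$'s of different sizes ($j$ versus $h$ entries). I would state explicitly that u.i.n.'s of diagonal matrices of different sizes are compared after padding with zeros; this is the same convention already implicit in Eq. \eqref{eq monotonia de ang2}. Alternatively, one can phrase everything as $\uniinv{(I-P_\cT)P_\cS}=\uniinv{(I-P_\cT)P_{\cX_j}}$ for $n\times n$ matrices, which avoids the issue entirely. As a consistency check, monotonicity (Remark \ref{rem cota inferior}) gives $\uniinv{\sin\Theta(\cX_j,\cT)}\leq\uniinv{\sin\Theta(\cS',\cT)}$ for every $\cS'\in\Lambda\text{-adm}_h(A)$. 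So the constructed $\cS$ is in fact a minimizer, and $d(\Lambda\text{-adm}_h(A),\cT)=\uniinv{\sin\Theta(\cX_j,\cT)}$, in agreement with that remark.
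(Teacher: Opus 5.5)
Your proof is correct and uses the same construction as the paper, $\cS=\cX_j\oplus\cD$ with $\cD\subseteq\cT\cap\cX_j^\perp$ of dimension $h-j$. The only difference is the verification. The paper bounds $\uniinv{\sin\Theta(\cS,\cT)}$ from above via Lemma \ref{lem acot sum subs} and the identity $P_{\cX_j}P_\cT=P_{\cX_j}P_{\cT\ominus\cD}$, then gets the reverse inequality from monotonicity. Your identity $(I-P_\cT)P_\cS=(I-P_\cT)P_{\cX_j}$ gives the equality in one step, and in fact equality of the sine vectors up to zero-padding.
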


\begin{proof}
Since all $\cS\in \Lambda\text{-adm}_h(A)$
contain $\cX_j$, the monotonicity of principal angles implies that 
$\uniinv{\sin \Theta(\cS,\cT)}\geq \uniinv{\sin \Theta(\cX_j,\cT)}$. The difficulty lies in choosing $\cS$ so that the reverse inequality is also true. 
Since $\dim(\cX_j^\perp\cap\cT)\geq t-j$, we can choose a subspace $\cT'\subseteq\cT\cap\cX_j^\perp\subseteq\cX_k$ of dimension $h-j$. Consider $\cS=\cX_j\oplus\cT'$. By construction $\cS\in \Lambda\text{-adm}_h(A)$ and by Lemma \ref{lem acot sum subs} we have that
\begin{align*}
    \uniinv{\sin \Theta(\cS,\cT)}
    &\leq
    \uniinv{\sin \Theta(\cX_j,(\cT\ominus\cT'))}
    +
    \uniinv{\sin \Theta(\cT',\cT')}
       =
    \uniinv{\sin \Theta(\cX_j,(\cT\ominus\cT'))}
    \,.
\end{align*}
Thus, it would be sufficient to prove that $\Theta(\cX_j,\,\cT\ominus\cT')=\Theta(\cX_j,\,\cT)$. To do this, first notice that $\dim(\cT\ominus\cT')=t-(h-j)=j+(t-h)\geq j$ and thus, the cosines of the angles between $\cX_j$ and the subspaces $\cT$ and $\cT\ominus\cT'$ are the first $j$ singular values of $P_{\cX_j}P_\cT$ and $P_{\cX_j}P_{\cT\ominus\cT'}$ respectively, but $P_{\cX_j}P_\cT=P_{\cX_j}(P_{\cT'}+P_{\cT\ominus\cT'})=P_{\cX_j}P_{\cT\ominus\cT'}$, since $\cT'\subseteq\cX_j^\bot$.
\end{proof}

\begin{rem}\label{rem constructivo}
    We point out that, in the conditions of Proposition \ref{pro dist a adm 2}, subspaces $\cS$ can be explicitly constructed as follows. If $T\in\K^{n\times t}$ and $X_j\in\K^{n\times j}$ are matrices with orthonormal columns that span $\cT$ and $\cX_j$ respectively, then $\dim(\ker(X^*_jT))\geq t-j$. Thus, we can consider a matrix $Z\in\K^{t\times (h-j)}$ with orthonormal columns and such that  $R(Z)\subset \ker(X^*_jT)$. Then, matrix $TZ\in\K^{n\times (h-j)}$ has orthonormal columns that span a subspace of $\cT$ (which would play the role of $\cT'$ in the proof of Proposition \ref{pro dist a adm 2}) and matrix $S=\begin{bmatrix}
        X_j & TZ
    \end{bmatrix}\in\K^{n\times h}$ has orthonormal columns that span the desired subspace. 
\end{rem}

\begin{lem}\label{lema arrimar subespacio}
    Let $\cW\subseteq\cX_k$ be a subspace of dimension $w$ and $\cT\subseteq\K^n$ be any subspace Then,
    \begin{enumerate}
        \item $P_{\cX_k}P_\cT P_{\cX_k} \leq P_\cV$, where $\cV:=P_{\cX_k}(\cT)$.
        \item $\theta_i(\cW,\,\cV)\leq\theta_i(\cW,\,\cT)$ for $1\leq i\leq\min\{w,\,\dim \cV\}$. Thus, for every u.i.n.  $$\uniinv{\sin \Theta(\cW,\,\cV)} \leq \uniinv{\sin \Theta(\cW,\,\cT)}\,.$$
    \end{enumerate}
\end{lem}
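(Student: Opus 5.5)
For item 1, we work with the orthogonal projection $P_{\cX_k}$, which is a contraction, and with $\cV=P_{\cX_k}(\cT)=R(P_{\cX_k}P_\cT)$. The operator $P_{\cX_k}P_\cT P_{\cX_k}=(P_{\cX_k}P_\cT)(P_{\cX_k}P_\cT)^*$ is positive semidefinite. Its range equals $R(P_{\cX_k}P_\cT)=\cV$, and its norm satisfies $\|P_{\cX_k}P_\cT P_{\cX_k}\|_2\leq 1$. A positive contraction $M$ with range contained in $\cV$ satisfies $M=P_\cV M P_\cV\leq \|M\|_2\,P_\cV\leq P_\cV$. This proves item 1.

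For item 2, we translate principal angles into eigenvalues of compressions. Let $W\in\K^{n\times w}$ have orthonormal columns spanning $\cW$. The squared cosines of the angles between $\cW$ and a subspace $\cU$ are the eigenvalues of $W^*P_\cU W$. More precisely, $\cos^2\theta_i(\cW,\cU)=\lambda_i(W^*P_\cU W)$ for $1\leq i\leq \min\{w,\dim\cU\}$, with the remaining eigenvalues equal to zero when $\dim \cU<w$. Since $\cW\subseteq\cX_k$, we have $W=P_{\cX_k}W$. Hence
$$
W^*P_\cT W=W^*P_{\cX_k}P_\cT P_{\cX_k}W\leq W^*P_\cV W
$$
by item 1, because congruence preserves the Löwner order. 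Weyl's monotonicity principle then gives $\lambda_i(W^*P_\cT W)\leq\lambda_i(W^*P_\cV W)$ for all $i$. Therefore $\cos\theta_i(\cW,\cT)\leq\cos\theta_i(\cW,\cV)$, which yields $\theta_i(\cW,\cV)\leq\theta_i(\cW,\cT)$ for $1\leq i\leq\min\{w,\dim\cV\}$.

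The main care point is index bookkeeping when $\dim\cV<\dim\cT$, and possibly $\dim \cV<w$. We must check that the eigenvalue indexing matches the angle indexing in both cases. The statement restricts to $i\leq\min\{w,\dim\cV\}$, where the eigenvalue identity holds for both subspaces, since $\dim\cV\leq\dim\cT$, so this is consistent.

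For the norm inequality, we use that the angles increase, so their sines satisfy $\sin\theta_i(\cW,\cV)\leq\sin\theta_i(\cW,\cT)$ entrywise. The monotonicity of u.i.n.'s for entrywise-dominated nonnegative vectors then gives $\uniinv{\sin\Theta(\cW,\cV)}\leq\uniinv{\sin\Theta(\cW,\cT)}$. If the two vectors of angles have different lengths, we pad with zeros as in the paper's conventions before applying this monotonicity.
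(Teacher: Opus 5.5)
Your proposal is correct and takes essentially the same approach as the paper. Item 1 uses that $P_{\cX_k}P_\cT P_{\cX_k}=(P_{\cX_k}P_\cT)(P_{\cX_k}P_\cT)^*$ is a positive contraction with range $\cV$, and your step $M=P_\cV M P_\cV\leq\|M\|_2P_\cV$ spells out what the paper leaves implicit. Item 2 uses Weyl monotonicity on the compressed order relation, then zero-padding and entrywise monotonicity of u.i.n.'s; you compress with $W^*(\cdot)W$ where the paper uses $P_\cW(\cdot)P_\cW$, which is only cosmetic since both have the same nonzero eigenvalues.
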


\begin{proof}
    Notice that $P_{\cX_k}P_\cT P_{\cX_k}=(P_{\cX_k}P_\cT) (P_{\cX_k}P_\cT)^*$, which tells us that
    \[
    \text{Ran}(P_{\cX_k}P_\cT P_{\cX_k})
    =
    \text{Ran}((P_{\cX_k}P_\cT) (P_{\cX_k}P_\cT)^*)
    =
    \text{Ran}(P_{\cX_k}P_\cT)=P_{\cX_k}(\cT)
    \]
    \noindent and we also have $\|P_{\cX_k}P_\cT P_{\cX_k}\|_2\leq 1$ by 
    the sub-multiplicativity of the operator norm. These facts imply the first item. Now, for $1\leq i\leq\min\{w,\,\dim P_{\cX_k}(\cT)\}$, by the first item we have that
    \begin{align*}
        \cos^2(\theta_i(\cT,\,\cW))
        &=
        \sigma_i^2(P_\cT P_{\cW})
        =
        \lambda_i(P_{\cW}P_{\cT}P_{\cW})
                =
        \lambda_i(P_{\cW}(P_{\cX_k}P_{\cT}P_{\cX_k})P_{\cW})
        \\         &
        \leq
        \lambda_i(P_{\cW}P_{\cV}P_{\cW})
                =
        \sigma_i^2(P_{\cV}P_{\cW})
        =
        \cos^2(\theta_i(\cV,\,\cW))
    \end{align*}
    \noindent which implies the first assertion of the second item. Since we are comparing the complete list of angles between $\cW$ and $\cV=P_{\cX_k}(\cT)$ this implies that
    $(\theta(\cW,\,\cV),0,\ldots,0)$ is (entry-wise) smaller than $\theta(\cW,\,\cT)$, which in turn implies the last assertion of the second item by the monotonicity of the function $\sin(x)$ and the properties of u.i.n. 
    (see \cite[Theorem IV.2.2]{Bhatia}).
\end{proof}

\begin{proof}[Proof of Theorem \ref{teo dist a adm2}]
In what follows we show that there exists $\cS\in \Lambda\text{-adm}_h(A)$, such that 
$$
\uniinv{\sin \Theta(\cS,\cT)}\leq  \uniinv{\sin \Theta(\cX_j,\cT)}+ \uniinv{\sin \Theta(\cX_k,\cT)}\,.
$$ The result then follows from this last fact. 
 We first consider  $\cT\subset\K^n$ such that 
$t:=\dim \cT$ satisfies $h\leq t\leq k$ and $\Theta(\cX_k,\cT)<\frac{\pi}{2} I_t$. 
 The hypothesis on $\cT$ 
 implies that $\dim(P_{\cX_k}(\cT))=t$. By Proposition \ref{pro dist a adm 2} there exists
$\cS_1\in \Lambda\text{-adm}_t(A)$ 
   such that $\uniinv{\sin \Theta(\cS_1,P_{\cX_k}(\cT))}= \uniinv{\sin \Theta(\cX_j,P_{\cX_k}(\cT))}$. Recall that such subspaces were constructed as $\cX_j\oplus\cW_1$ where $\dim\cW_1=t-j$ and $\cW_1\subseteq\cX_j^\bot\cap P_{\cX_k}(\cT)$ (see the proof of Proposition \ref{pro dist a adm 2}).  Now, set $\cS:=\cX_j\oplus\cW\subseteq\cS_1$ for some $h-j$-dimensional subspace $\cW$ of $\cW_1$. By construction, $\cS\in \Lambda\text{-adm}_h(A)$.
By combining the monotonicity of principal angles with the triangular inequality for angular metrics \cite{Nere, QZL05} and Lemma \ref{lema arrimar subespacio} we can estimate that
\begin{align*}
    \uniinv{\sin \Theta(\cS,\cT)}
    &\leq
    \uniinv{\sin \Theta(\cS_1,\cT)}
 \leq
    \uniinv{\sin \Theta(\cS_1,P_{\cX_k}(\cT))}
    +
    \uniinv{\sin \Theta(P_{\cX_k}(\cT),\cT)}
    \\
    &=
    \uniinv{\sin \Theta(\cX_j,P_{\cX_k}(\cT))}
    +
    \uniinv{\sin \Theta(\cX_k,\cT)}
    \\
    &\leq
    \uniinv{\sin \Theta(\cX_j,\cT)}
    +
    \uniinv{\sin \Theta(\cX_k,\cT)}
    \,.
\end{align*}
In case $t=\dim \cT>k$ then  $\Theta(\cX_k,\cT)<\frac{\pi}{2} I_k$. Hence, $\tilde T=P_{\cT}(\cX_k)$ then
$\dim \tilde\cT=k$ and $\Theta(\tilde \cT,\cX_k)=\Theta(\cT,\cX_k)<\frac{\pi}{2} I_k$. Moreover, since $P_{\cT}(\cX_j)\subset \tilde\cT$ then we also get that $\Theta(\tilde \cT,\cX_j)=\Theta(\cT,\cX_j)$.
Thus, we can apply the previous case to $\tilde \cT$ 
and conclude that there exists 
$\cS\in \Lambda\text{-adm}_h(A)$ such that 
$$
\uniinv{\sin \Theta(\cS,\tilde \cT)}\leq \uniinv{\sin \Theta(\cX_j,\cT)}
    +
    \uniinv{\sin \Theta(\cX_k,\cT)}\,.
$$ The result now follows from the monotonicity of principal angles i.e. $\|{\sin \Theta(\cS,\cT)}\|\leq \|{\sin \Theta(\cS,\tilde \cT)}\|$.
\end{proof}

\begin{rem}\label{rem constructivo 2}
    Given matrices with orthonormal columns $T\in\K^{n\times t}$, $X_j\in\K^{n\times j}$ and $X_k\in\K^{n\times k}$, that span the subspaces involved in the statement of Theorem \ref{teo dist a adm2}, we can present the subspace $\cS$  explicitly as follows. 
    If $t>k$, replace $T$ with $TT^*X_k\in\K^{n\times k}$. Then, compute $X_kX_k^*T$ and take a matrix $Q\in\K^{n\times t}$ with orthonormal columns that span its range, $P_{\cX_k}(\cT)$, and follow Remark \ref{rem constructivo} using $Q$ as the matrix $T$ of said remark to produce a matrix whose range is the desired subspace.
\end{rem}

 \subsection{Proof of Theorem \ref{teo se aprox a admis}}\label{sec6.2}

To prove Theorem \ref{teo se aprox a admis} we will need some
preliminary results. We begin with the following identity for principal angles obtained by Knyazev and Zhu.
\begin{teo}[\cite{KZ13}]\label{teo: KZ tangents}
Let $\begin{bmatrix} X&X_\bot\end{bmatrix}$ be a unitary matrix with $X\in\K^{n\times \ell}$ and set $\cX=R(X)$. Let $H\in \K^{n\times d}$ be such that it has orthonormal columns, or such that rk$(H)=$rank$(X^*H)$. 

 If we let $\cH=R(H)$ then, the positive singular values $\sigma_+(T)$ of
$T=X_\bot^* H(X^*H)^\dagger$ satisfy
$$
\tan \Theta(\cX,\cH)=[\infty,\ldots,\infty,\sigma_+(T),0,\ldots,0]
$$ 
with $\min(\dim(\cX^\perp\cap \cH),\dim(\cX\cap \cH^\perp))$ $\infty$'s and $\dim(\cX\cap \cH)$ zeros. In case rk$(H)=$rank$(X^*H)$, then 
$\min(\dim(\cX^\perp\cap \cH),\dim(\cX\cap \cH^\perp))=0$. \qed
\end{teo}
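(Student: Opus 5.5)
The plan is to reduce everything to the case where $\cH$ has no nonzero vectors orthogonal to $\cX$. In that case $\cH$ can be written as the range of a matrix of the form $XU+X_\bot K$ with $U$ having orthonormal columns, and the tangents of the angles between $\cX$ and $\cH$ are read off directly as the singular values of $K$.

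Set $M:=X^*H\in\K^{\ell\times d}$ and $G:=H M^\dagger\in\K^{n\times \ell}$.

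\emph{Case $\rk(H)=\rk(M)$.*/}
1. Since $\ker M\supseteq\ker H$ and the ranks agree, we get $\ker M=\ker H$. Hence $HM^\dagger M=HP_{(\ker H)^\perp}=H$, so $R(G)=R(H)=\cH$.
2. Decompose $G=[X\ X_\bot][X\ X_\bot]^*G=X\,MM^\dagger+X_\bot T=X P_{R(M)}+X_\bot T$.
3. Since $M^\dagger=M^\dagger P_{R(M)}$, we have $T=TP_{R(M)}$. Choose $U\in\K^{\ell\times m}$ with orthonormal columns spanning $R(M)$, where $m=\rk(M)=\dim\cH$. Then $\cH=R(GU)$ with $GU=XU+X_\bot K$ and $K:=TU$, and $K$ has the same positive singular values as $T$.
4. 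An orthonormal basis of $\cH$ is $Y=(XU+X_\bot K)(I+K^*K)^{-1/2}$. Therefore $X^*Y=U(I+K^*K)^{-1/2}$, whose singular values are $(1+\sigma_i(K)^2)^{-1/2}$, $1\le i\le m$.
5. Thus the cosines of the $m=\min(\ell,\dim\cH)$ principal angles are $(1+\sigma_i(K)^2)^{-1/2}$, which gives $\tan\theta_i=\sigma_i(K)$. In particular no angle equals $\pi/2$, consistent with the zero count of $\infty$'s in this case.
6. The number of zero tangents equals the number of zero angles, which is $\dim(\cX\cap\cH)$.

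\emph{Case $H$ with orthonormal columns.*/}
1. Write $\cH=\cH_1\oplus\cH_0$ with $\cH_0:=\cH\cap\cX^\perp=H(\ker M)$ and $\cH_1:=H((\ker M)^\perp)$. The two pieces are orthogonal because $H$ is an isometry.
2. Since $R(M^\dagger)=(\ker M)^\perp$, $T$ only depends on $H_1:=H|_{(\ker M)^\perp}$. Moreover $H_1$ satisfies $\rk(H_1)=\rk(X^*H_1)$, so the first case gives the finite tangents for $\Theta(\cX,\cH_1)$ as the singular values of $T$, together with $\dim(\cX\cap\cH_1)=\dim(\cX\cap\cH)$ zeros.
3. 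It remains to show that adjoining $\cH_0\subset\cX^\perp$ only adds angles equal to $\pi/2$. Because $\cH_0\perp\cX$ and $\cH_0\perp\cH_1$, we have $P_\cX P_\cH=P_\cX P_{\cH_1}$. Hence the nonzero cosines are unchanged, and the additional angles are all $\pi/2$.
4. Counting these within a list of length $\min(\ell,d)$, the number of $\pi/2$ angles is $\min(\ell,d)-\rk(X^*H)$. This equals $\min(\dim(\cX^\perp\cap\cH),\dim(\cX\cap\cH^\perp))$, since $\dim(\cX^\perp\cap\cH)=d-\rk M$ and $\dim(\cX\cap\cH^\perp)=\ell-\rk M$.

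The main obstacle is keeping this bookkeeping of the $\infty$ and zero counts consistent with the convention that $\Theta(\cX,\cH)$ has $\min(\dim\cX,\dim\cH)$ entries. The analytic core, step 4 of the first case, is routine.
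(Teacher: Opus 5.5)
The paper gives no proof of this statement; it is quoted from \cite{KZ13}, so there is no in-paper argument to compare with, and your proposal has to stand on its own. It does: the proof is correct.

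In the rank-condition case, $\ker(X^*H)=\ker H$ gives $R(HM^\dagger)=\cH$. Writing $\cH=R(XU+X_\bot K)$ with $K=TU$ (so $KK^*=TT^*$) and orthonormalizing with $(I+K^*K)^{-1/2}$ yields the cosines $(1+\sigma_i(K)^2)^{-1/2}$ directly, and $\dim\cH=\rk M\le \ell$ gives the right angle count. In the orthonormal case, splitting off $\cH_0=\cH\cap\cX^\bot=H(\ker M)$ and using $P_{\cX} P_{\cH}=P_{\cX} P_{\cH_1}$ accounts exactly for the $\min(\ell,d)-\rk M=\min(d-\rk M,\ell-\rk M)$ right angles.

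Tangent identities of this kind are usually derived through a CS decomposition. Your graph-of-a-linear-map argument is more elementary, and it handles non-orthonormal $H$ without orthonormalizing first.

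Two one-line points should be written out rather than asserted:
\begin{enumerate}
\item In the first case, the last sentence of the statement is just $\cX^\bot\cap\cH=H(\ker M)=H(\ker H)=\{0\}$. You only call it ``consistent''; state it.
\item In the second case, it is cleanest to set $H_1:=HP_{(\ker M)^\bot}\in\K^{n\times d}$. Then $X^*H_1=M$, $\rk H_1=\rk M$, and the $T$ built from $H_1$ is literally the same matrix. The equality $\cX\cap\cH_1=\cX\cap\cH$ also needs a line: for $x=h_1+h_0\in\cX$ with $h_i\in\cH_i$, one gets $\|h_0\|^2=\langle x,h_0\rangle=0$.
\end{enumerate}
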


The next result appears in \cite{AZMS}, and complements Theorem \ref{teo: KZ tangents} above.

\begin{teo}[\cite{AZMS}]\label{teo tan ang com zk}
Let $\begin{bmatrix} X&X_\bot\end{bmatrix}$ be a unitary matrix with $X\in\K^{n\times \ell}$ and set $\cX=R(X)$. Let $H'\in\K^{n\times d'}$ be such that $\cH=R(H')$, $\dim \cH\geq \ell$, and assume that $\cX\cap \cH^\perp=\{0\}$. 
 Then, for every unitarily invariant norm $\uniinv{\cdot}$ we have that
\begin{equation}\label{eq nui}
\uniinv{\tan \Theta(\cX,\cH)}
\leq
\|X_{\bot}^* H'(X^*H')^\dagger\|\,.
\end{equation}\qed
\end{teo}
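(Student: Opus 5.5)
The plan is to reduce the statement to the Knyazev--Zhu identity in Theorem \ref{teo: KZ tangents}. We apply it to a well-chosen $\ell$-dimensional subspace $\cG\subseteq\cH$, and then pass from $\cG$ to $\cH$ by the monotonicity of principal angles.

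\emph{Step 1 (rank of $X^*H'$).} The hypothesis $\cX\cap\cH^\perp=\{0\}$ means that $H'^*X$ is injective on $\K^\ell$. If $H'^*Xv=0$, then $Xv\perp R(H')=\cH$, so $Xv=0$ and hence $v=0$. Therefore $\rk(X^*H')=\rk(H'^*X)=\ell$, i.e. $X^*H'\in\K^{\ell\times d'}$ has full row rank. Set $Z:=(X^*H')^\dagger\in\K^{d'\times\ell}$. Full row rank gives $X^*H'Z=P_{R(X^*H')}=I_\ell$.

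\emph{Step 2 (the auxiliary subspace).} Let $H'':=H'Z\in\K^{n\times \ell}$ and $\cG:=R(H'')\subseteq\cH$. Since $X^*H''=I_\ell$, we get $\rk(H'')=\ell=\rk(X^*H'')$, so $\dim\cG=\ell$. This is exactly the second admissible hypothesis of Theorem \ref{teo: KZ tangents}, so no $\infty$'s occur. That theorem then says that the vector $\tan\Theta(\cX,\cG)$ consists of the positive singular values of
$$X_\bot^*H''(X^*H'')^\dagger=X_\bot^*H'Z\,I_\ell=X_\bot^*H'(X^*H')^\dagger,$$
padded with zeros. Consequently $\uniinv{\tan\Theta(\cX,\cG)}=\uniinv{X_\bot^*H'(X^*H')^\dagger}$ for every u.i.n., after identifying both sides as norms of matrices with the same nonzero singular values.

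\emph{Step 3 (monotonicity).} Since $\cG\subseteq\cH$ and $\dim\cX=\dim\cG=\ell\leq\dim\cH$, Eq. \eqref{eq monotonia de ang} gives $\Theta(\cX,\cH)\leq\Theta(\cX,\cG)$ entrywise. Both vectors have $\ell$ entries, and all of them are $<\pi/2$, because $\cX\cap\cH^\perp=\{0\}$ and, by Step 2, $\tan\Theta(\cX,\cG)$ is finite. Because $\tan$ is increasing on $[0,\pi/2)$ and u.i.n.'s are monotone on vectors of nonnegative entries (see \cite[Theorem IV.2.2]{Bhatia}), we get $\uniinv{\tan\Theta(\cX,\cH)}\leq\uniinv{\tan\Theta(\cX,\cG)}$. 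Combining this with Step 2 yields Eq. \eqref{eq nui}.

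The main point requiring care is Step 2. One must check that the choice $H''=H'(X^*H')^\dagger$ indeed lands in the ``no infinities'' case of Theorem \ref{teo: KZ tangents}, and that the pseudo-inverse collapses to $I_\ell$. Both follow from the full-row-rank fact established in Step 1.
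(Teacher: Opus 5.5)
Your proof is correct and complete. Step 1 correctly derives full row rank of $X^*H'$ from $\cX\cap\cH^\perp=\{0\}$. The subspace $\cG=R(H'(X^*H')^\dagger)\subseteq\cH$ has dimension $\ell$ and falls in the rank case of Theorem \ref{teo: KZ tangents}, with $(X^*H'')^\dagger=I_\ell$. The passage from $\cG$ to $\cH$ via Eq. \eqref{eq monotonia de ang} (legitimate since $\dim\cX=\dim\cG\leq\dim\cH$), followed by monotonicity of $\tan$ on $[0,\pi/2)$ and of u.i.n.'s on nonnegative vectors, is valid.

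The paper gives no proof of this statement; it is quoted from \cite{AZMS}. Your reduction to an $\ell$-dimensional subspace of $\cH$, followed by monotonicity, is the standard argument behind the cited bound, so it works as a self-contained derivation.

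If you want to avoid invoking Theorem \ref{teo: KZ tangents} at all, you can verify Step 2 directly. Write $H''=X+X_\bot T$ with $T=X_\bot^*H''$. Then $G=(X+X_\bot T)(I+T^*T)^{-1/2}$ is an orthonormal basis of $\cG$, and $X^*G=(I+T^*T)^{-1/2}$. This gives $\tan\Theta(\cX,\cG)$ immediately as the singular values of $T=X_\bot^*H'(X^*H')^\dagger$.
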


\noindent 
The next results are inspired by some results from \cite{Gu,WWZ}.
In the proof of the next lemma, we will use the following elementary fact from linear algebra: if $\cS,\,\cT\subseteq \K^n$ are subspaces, then
\begin{equation}\label{eq resta de dimensiones}
    \dim(\cT)-\dim(\cS)
    =
    \dim(\cT\cap\cS^\bot)-\dim(\cT^\bot\cap\cS)\,.
\end{equation}

\begin{lem}\label{lema H_p}
Let $\{x_1,\ldots,x_n\}$ be an orthonormal basis of $\K^n$ and let $0\leq j<h< k \leq n$. Let $\cW\subset \K^n$ such that $\dim\cW=r$ with $h\leq r< k$. Let 
$0\leq p_1,\, p_2$ be such that $p_1+ p_2\leq r-h$ and
\[
\cW^\perp \cap 
\overline{\{x_1,\ldots,x_{j+p_1},x_{k+1},\ldots,x_{k+p_2}\}}
=
\{0\}
\py
\cW\cap
\overline{\{x_{1},\ldots,x_{k}\}}^\perp =\{0\}\,.
\]
\noindent(notice that the conditions above are generic). Then, there exists $\cH_p\subset \cW$ such that \begin{enumerate}
    \item\label{it1x28} $\dim\cH_p=r-(p_1+p_2)\geq h$; 
    \item\label{it2x28} $\cH_p\subset    \overline{\{x_{j+1},\ldots,x_{j+p_1}\}}^\perp \cap 
    \overline{\{x_{k+1},\ldots,x_{k+p_2}\}}^\perp $;
    \item\label{it3x28} $\cH_p^\perp\cap \overline{\{x_1,\ldots,x_j\}}=\{0\}$ and 
    $\cH_p\cap 
    \overline{\{x_{1},\ldots,x_k\}}^\perp=\{0\}$.
   
\end{enumerate}
\end{lem}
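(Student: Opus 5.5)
The plan is to take $\cH_p$ to be the intersection of $\cW$ with the orthogonal complement of the ``bad'' eigenvectors, and then to check item \ref{it3x28} by a dimension count based on Eq. \eqref{eq resta de dimensiones}. Set $\cE:=\overline{\{x_{j+1},\ldots,x_{j+p_1},x_{k+1},\ldots,x_{k+p_2}\}}$ and define $\cH_p:=\cW\cap\cE^\perp$. Item \ref{it2x28} then holds by construction.

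For item \ref{it1x28}, the first assumption gives $\cW^\perp\cap\cE=\{0\}$, since $\cE\subseteq\overline{\{x_1,\ldots,x_{j+p_1},x_{k+1},\ldots,x_{k+p_2}\}}$. The map $\cW\ni w\mapsto P_\cE w$ is then onto $\cE$: otherwise some nonzero $e\in\cE$ would be orthogonal to $P_\cE(\cW)$, hence to $\cW$. Its kernel is $\cW\cap\cE^\perp=\cH_p$, so
$$\dim\cH_p=r-\dim\cE=r-(p_1+p_2)\geq h.$$

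For the second condition of item \ref{it3x28}, $\cH_p\subseteq\cW$ and $\cW\cap\cX_k^\perp=\{0\}$ give $\cH_p\cap\cX_k^\perp=\{0\}$ immediately.

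The main step is the first condition of item \ref{it3x28}, namely $\cH_p^\perp\cap\cX_j=\{0\}$. Let $\cF:=\overline{\{x_1,\ldots,x_{j+p_1},x_{k+1},\ldots,x_{k+p_2}\}}=\cX_j\oplus\cE$. The first hypothesis says $\cW^\perp\cap\cF=\{0\}$, and the argument above shows that $P_\cF|_\cW$ is onto $\cF$. Now take $x\in\cX_j$ with $x\perp\cH_p$. I want to show $x\perp\cW$; then $x\in\cW^\perp\cap\cF=\{0\}$.

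To see this, note that $P_\cE|_{\cW}$ is onto with kernel $\cH_p$, so $\cW=\cH_p\oplus\cG$, where $\cG\subseteq\cW$ has dimension $p_1+p_2$ and $P_\cE|_\cG:\cG\to\cE$ is a bijection. By itself this does not force $x\perp\cG$, so I would argue by dimensions instead. Consider $P_\cF|_\cW$, which is onto $\cF$. The subspace $\cH_p$ maps into $\cF\cap\cE^\perp=\cX_j$, since $\cH_p\perp\cE$. The kernel $\cK:=\cW\cap\cF^\perp$ has dimension $r-(j+p_1+p_2)$ and lies in $\cH_p$, because $\cF^\perp\subseteq\cE^\perp$. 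Therefore
$$\dim P_\cF(\cH_p)=\dim\cH_p-\dim\cK=j,$$
so $P_\cF(\cH_p)=\cX_j$.

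Finally, for $x\in\cX_j\subseteq\cF$ and $w\in\cH_p$ we have $\langle x,w\rangle=\langle x,P_\cF w\rangle$. So $x\perp\cH_p$ means $x\perp P_\cF(\cH_p)=\cX_j$, which gives $x=0$.

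The delicate point is exactly this identification $P_\cF(\cH_p)=\cX_j$, which rests on the surjectivity of $P_\cF|_\cW$ (from the generic hypothesis) and on the kernel inclusion $\cK\subseteq\cH_p$. If $j=0$, the first condition of item \ref{it3x28} is vacuous.
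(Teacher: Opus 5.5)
Your proof is correct and follows essentially the paper's argument. You use the same subspace $\cH_p=\cW\cap\mathcal{E}^\perp$, and you reduce item 1 and the first condition of item 3, by dimension counting, to the hypothesis $\cW^\perp\cap(\cX_j\oplus\mathcal{E})=\{0\}$. The only difference is presentation: you phrase the counts via surjectivity and rank--nullity of $P_{\mathcal{E}}|_\cW$ and $P_{\mathcal{F}}|_\cW$, concluding that $P_{\mathcal{F}}(\cH_p)=\cX_j$, whereas the paper chains the identity in Eq.~\eqref{eq resta de dimensiones}.
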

\begin{proof} Notice that by item 1. above we should have $\cH_p=\cW$ whenever $p_1=p_2=0$; hence we assume that $p_1+p_2\geq 1$. Let $\cX_j=\overline{\{x_{1},\ldots,x_{j}\}}$,  $\cX_\text{aux}=\overline{\{x_{j+1},\ldots,x_{j+p_1},\,x_{k+1},\ldots,x_{k+p_2}\}}$. Then, one of the assumptions about $\cW$ is that $\cW^\bot\cap(\cX_j\oplus\cX_\text{aux})=\{0\}$. 
    Set $\cH_p:=\cW\cap\cX_\text{aux}^\bot\subseteq\cW$. It is clear that the condition in item \ref{it2x28}. is fulfilled. To prove the other conditions are met, we rely on Eq. \eqref{eq resta de dimensiones}. Indeed,
    $\dim(\cH_p)
    =
    \dim(\cW\cap\cX_\text{aux}^\bot)
    =
    \dim(\cW)-\dim(\cX_\text{aux})+\dim(\cW^\bot\cap\cX_\text{aux})
    \,,$
    \noindent which is equal to $r-(p_1+p_2)$, since $\cW^\bot\cap\cX_\text{aux}=\{0\}$ by the assumptions about $\cW$. So, the condition in item \ref{it1x28} is met. Next,
    \begin{align*}
        \dim(\cH_p^\bot\cap\cX_j)
        &=
        \dim(\cX_j)-\dim(\cH_p)+\dim(\cH_p\cap\cX_j^\bot)
        \\
        &= 
        j+p_1+p_2-r+\dim(\cW\cap\cX_j^\bot\cap\cX_\text{aux}^\bot)       
        \\
        &=
        \dim(\cX_j\oplus\cX_\text{aux})-\dim(\cW)+\dim(\cW\cap(\cX_j\oplus\cX_\text{aux})^\bot)
        \\
        &=
        \dim(\cW^\bot\cap(\cX_j\oplus\cX_\text{aux})) =0\,,
    \end{align*}
    \noindent which implies that the first condition in item \ref{it3x28} is fulfilled. Finally, combining the facts that $\cH_p\subset \cW$ and $\cW\cap 
    \overline{\{x_{1},\ldots,x_k\}}^\perp=\{0\}$ we get that 
    $\cH_p\cap  \overline{\{x_{1},\ldots,x_k\}}^\perp=\{0\}$.    
\end{proof}

\begin{rem}
We point out that the subspace $\cH_p$ from Lemma \ref{lema H_p} can be explicitly constructed as follows. With the notations of that lemma, let $W\in\K^{n\times r}$ have orthonormal columns that span $\cW$ and consider
$F=[x_{j},\ldots,x_{j+p_1},x_{k+1},\ldots,x_{k+p_2}]^* W\in \K^{(p_1+p_2)\times r}
\,.$
 The hypothesis on $\cW$ implies that $F$ is full rank. Now, let $Z\in\K^{r\times r-(p_1+p_2)}$ be a matrix with orthonormal columns that span $\ker(F)$. Then, it can be shown that the matrix $WZ\in\K^{n\times (r-p_1-p_2)}$ has orthonormal columns that span a subspace $\cH_p$ with the desired properties. 
\end{rem}

Notice that the conditions in item 3. of the previous lemma 
imply that the angles between $\cH_p$ and the subspaces $\overline{\{x_1,\ldots,x_j\}}$ and $\overline{\{x_1,\ldots,x_k\}}$ are strictly less than $\pi/2$; 
thus, the tangents between them are well-defined. Next, we exploit the existence of the subspace $\cH_p$.

\begin{lem}\label{lema H_p 2}
    Let $A=X\Lambda X^*$ be an eigendecomposition  and consider partitions $X=\begin{bmatrix} X_j& X_{j,\perp}\end{bmatrix}=\begin{bmatrix} X_k& X_{k,\perp}\end{bmatrix}$. Let $\cW\subset \K^n$ with $\dim\cW=r$ and such that $h\leq r< k$. Let $p_1$ and $p_2$ satisfy the conditions of Lemma \ref{lema H_p} with respect to indices $1\leq j<h< k$ and the orthonormal basis $\{x_1,\ldots,x_n\}$ of $\K^n$, formed by the columns of $X$. 
    Let $\cH_p$ be the subspace from Lemma \ref{lema H_p}. Then, for every polynomial $\phi\in \K[x]$ such that $\phi(\Lambda_k)$ is invertible and for every u.i.n. $\uniinv{\cdot}$ we have that
    \begin{equation}\label{eq H_p}
    \begin{array}{r@{}l}
        \uniinv{\tan \Theta(\cX_j,\, \phi(A)(\cH_p) )}
        &\leq
        \|\phi(\Lambda_j)^{-1}\|_2\ \|\phi(\Lambda_{j+p_1,\perp})\|_2
        \;\uniinv{ \tan\Theta(\cX_j,\, \cH_p)}\,,
          \\[2mm]
        \uniinv{\tan\Theta(\cX_k,\, \phi(A)(\cH_p) )}
        &\leq
         \|\phi(\Lambda_k)^{-1}\|_2 \ \|\phi(\Lambda_{k+p_2,\perp})\|_2 \ \uniinv{\tan\Theta(\cX_k,\cH_p)}\,.
            \end{array}
    \end{equation}
\end{lem}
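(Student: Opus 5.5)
We will use the tangent representation of Theorem \ref{teo: KZ tangents} and the bound of Theorem \ref{teo tan ang com zk}. Fix $H\in\K^{n\times d}$, $d=r-(p_1+p_2)$, with orthonormal columns spanning $\cH_p$. Then $\phi(A)(\cH_p)=R(\phi(A)H)$, and $\phi(A)H=X\phi(\Lambda)X^*H$. We treat the inequality for $\cX_k$; the one for $\cX_j$ is identical after replacing $k,p_2$ by $j,p_1$.

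Write $X^*H=\begin{bmatrix} H_1\\ H_2\end{bmatrix}$ with $H_1=X_k^*H\in\K^{k\times d}$ and $H_2=X_{k,\perp}^*H$. Since $\cH_p\cap\cX_k^\perp=\{0\}$ (item 3 of Lemma \ref{lema H_p}), $H_1$ has full column rank $d$, so $H_1^\dagger H_1=I_d$. We then have $X_k^*\phi(A)H=\phi(\Lambda_k)H_1$, which has full column rank because $\phi(\Lambda_k)$ is invertible. It follows that $\cX_k\cap(\phi(A)\cH_p)^\perp=\{0\}$ in the relevant sense, so the hypotheses of Theorem \ref{teo tan ang com zk} hold (note $\dim \phi(A)\cH_p=d\le k$; if the dimension condition $\dim\cH\ge\ell$ is an issue, we use instead the exact formula of Theorem \ref{teo: KZ tangents} with $\mathrm{rk}(H')=\mathrm{rk}(X^*H')$). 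Either way,
$$\uniinv{\tan\Theta(\cX_k,\phi(A)\cH_p)}\le \uniinv{\phi(\Lambda_{k,\perp})H_2\,(\phi(\Lambda_k)H_1)^\dagger}.$$
Because $\phi(\Lambda_k)H_1$ has full column rank, $(\phi(\Lambda_k)H_1)^\dagger=(\phi(\Lambda_k)H_1)^\dagger\phi(\Lambda_k)H_1H_1^\dagger\phi(\Lambda_k)^{-1}=H_1^\dagger\phi(\Lambda_k)^{-1}$ up to the projection: explicitly, $\phi(\Lambda_k)H_1\cdot H_1^\dagger\phi(\Lambda_k)^{-1}$ is a (possibly oblique) left-inverse-compatible generalized inverse. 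The cleanest route is to avoid the pseudoinverse identity. We note that $M\,(\phi(\Lambda_k)H_1)^\dagger$ and $M H_1^\dagger \phi(\Lambda_k)^{-1}$ have the same restriction to $R(\phi(\Lambda_k)H_1)$, while the first vanishes on its orthogonal complement. Hence the singular values of the first are dominated by those of the second, so the u.i.n. of the first is at most that of the second. This is the main technical step, and we will verify it carefully, possibly via $X^\dagger=$ minimal-norm solution.

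Next we use item 2 of Lemma \ref{lema H_p}. Since $\cH_p\perp x_{k+1},\dots,x_{k+p_2}$, the first $p_2$ rows of $H_2$ vanish, so $\phi(\Lambda_{k,\perp})H_2=\begin{bmatrix}0\\ \phi(\Lambda_{k+p_2,\perp})H_2'\end{bmatrix}$ with $H_2'=X_{k+p_2,\perp}^*H$. Therefore
$$\uniinv{\phi(\Lambda_{k,\perp})H_2H_1^\dagger\phi(\Lambda_k)^{-1}}\le\|\phi(\Lambda_{k+p_2,\perp})\|_2\,\|\phi(\Lambda_k)^{-1}\|_2\,\uniinv{H_2'H_1^\dagger},$$
and $\uniinv{H_2'H_1^\dagger}=\uniinv{X_{k,\perp}^*H(X_k^*H)^\dagger}=\uniinv{\tan\Theta(\cX_k,\cH_p)}$ by Theorem \ref{teo: KZ tangents}, with no infinite entries since $H_1$ has full column rank. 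For $\cX_j$ the analogue uses $H_1=X_j^*H$, which has full row rank $j$ by $\cH_p^\perp\cap\cX_j=\{0\}$, so $H_1H_1^\dagger=I_j$. In this case $(\phi(\Lambda_j)H_1)^\dagger$ can be replaced by $H_1^\dagger\phi(\Lambda_j)^{-1}$, which is a right inverse, and Theorem \ref{teo tan ang com zk} with $H'=\phi(A)H$ applies directly. We also need $\phi(\Lambda_j)$ invertible, which holds because it is a principal block of $\phi(\Lambda_k)$. The vanishing rows now come from $x_{j+1},\dots,x_{j+p_1}$.
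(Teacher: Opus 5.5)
Your argument is correct and is essentially the paper's proof. It uses the Knyazev--Zhu exact formula for $\cX_k$, Theorem \ref{teo tan ang com zk} for $\cX_j$ together with the exact identity $(\phi(\Lambda_j)X_j^*H)^\dagger=(X_j^*H)^\dagger\phi(\Lambda_j)^{-1}$, the zero rows coming from item 2 of Lemma \ref{lema H_p}, and sub-multiplicativity. For $\cX_k$ the Knyazev--Zhu formula is the only available route: $\dim\phi(A)(\cH_p)=d<k$ forces $\cX_k\cap(\phi(A)(\cH_p))^\perp\neq\{0\}$, so your first appeal to Theorem \ref{teo tan ang com zk} there is invalid and should be dropped rather than kept as an alternative. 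The only real variation is that for $\cX_k$ you write $(\phi(\Lambda_k)H_1)^\dagger=H_1^\dagger\phi(\Lambda_k)^{-1}P_{R(\phi(\Lambda_k)H_1)}$ and discard the projection. The paper instead factors it as $(X_k^*H)^\dagger(\phi(\Lambda_k)P_{R(X_k^*H)})^\dagger$ and uses $\|(\phi(\Lambda_k)P_{R(X_k^*H)})^\dagger\|_2\leq\|\phi(\Lambda_k)^{-1}\|_2$. Both are valid, and yours has the advantage of being self-contained.
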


\begin{proof}
    Let us consider a matrix $H\in\K^{n\times (r-p_1-p_2)}$ with orthonormal columns and range $\cH_p$. The proofs of the inequalities in Eq. \eqref{eq H_p} differ slightly, since the dimensions of the subspaces involved in the left-hand sides of both inequalities have different relationships. Indeed, by item 3. in Lemma \ref{lema H_p} we have $\cH_p\cap \cX_k^\perp=\{0\}$ and hence $\text{rank}(\phi(\Lambda_k)X_k^*H)=\dim \cH_p$,  since $\phi(\Lambda_k)$ is invertible.
    Hence, 
    \begin{equation}\label{eq dim fiahp}
\dim \cH_p\geq \text{rank}(\phi(A)(\cH_p))\geq  \text{rank}(X_k^*\phi(A)H)=    \text{rank}(\phi(\Lambda_k)X_k^*H)=\dim\cH_p\,.
    \end{equation} Thus, the inequalities above are equalities. 
    We now prove the first inequality in Eq. \eqref{eq H_p}. Recall that $j<h\leq \dim\cH_p$ and that, by item 3 in Lemma \ref{lema H_p}, $\cX_j\cap \cH_p^\perp=\{0\}$; by Theorem \ref{teo tan ang com zk} we have 
$$        \uniinv{{\tan\Theta(\cX_j\coma \phi(A)(\cH_p))}}
        \leq 
        \uniinv{ X_{j,\perp}^* \phi(A)H         \;\;
        (X_j^*\phi(A)H)^\dagger}     =\uniinv{  \phi(\Lambda_{j,\perp})X_{j,\perp}^*H         \;\;
        (\phi(\Lambda_j)X_j^*H)^\dagger}\,.     
$$
Since $\cH_p\subseteq\{x_{j+1},\ldots,x_{j+p_1}\}^\bot$ we have that $X_{j,\bot}^*H=\begin{bmatrix}
        x_{j+1},\ldots,x_{j+p_1},x_{j+p_1+1},\ldots,x_n
    \end{bmatrix}^*H=[0 \ \  X_{j+p_1,\bot}^*H]$ and thus,
    \[
    \phi(\Lambda_{j,\perp}) X_{j,\perp}^* H
    =
    \begin{bmatrix}
        0&0\\0& \phi(\Lambda_{j+p_1,\bot})
    \end{bmatrix}
    X_{j,\bot}^*H
    \,.
    \]
Notice that as a consequence of the hypotheses $\phi(\Lambda_j)$ is invertible, so we can apply \cite[Proposition 6.4]{Massey2025} and get that $(\phi(\Lambda_j) X_j^* H)^\dagger = (X_{j}^* H)^\dagger(\phi(\Lambda_{j})P_{R(X_j^*H)}) )^\dagger=(X_{j}^* H)^\dagger \ \phi(\Lambda_{j})^{-1}$, since $R(X_j^*H)=\K^j$. Using the previous facts together with the sub-multiplicativity of u.i.n.'s, and Theorem \ref{teo: KZ tangents}    
    \begin{align*}
        \uniinv{{\tan\Theta(\cX_j\coma \phi(A)(\cH_p))}}
        &\leq
        \uniinv{     \begin{bmatrix}
        0&0\\0& \phi(\Lambda_{j+p_1,\bot})
        \end{bmatrix}
        X_{j,\bot}^*H 
        \;\;
        (X_{j}^* H)^\dagger \phi(\Lambda_{j})^{-1}
        }
        \\ 
        &\leq
         \|\phi(\Lambda_{j+p_1,\perp})\|_2 \   
        \|\phi(\Lambda_{j})^{-1} \|_2\
        \uniinv{X_{j,\bot}^*H 
        \;\;
        (X_{j}^* H)^\dagger} 
        \\ 
        &=
        \|\phi(\Lambda_{j+p_1,\perp})\|_2 \   \|\phi(\Lambda_j)^{-1}\|_2 \ \uniinv{\tan\Theta(\cX_k,\cH_p)} \,.
    \end{align*}

\noindent   The proof of the second inequality in Eq. \eqref{eq H_p} follows a similar path. Indeed, by the comments at the beginning of the proof, we can apply Theorem \ref{teo: KZ tangents} case (ii) and get that 
       
      $$\uniinv{\tan\Theta(\cX_k\coma \phi(A)(\cH_p))}
        =
        \uniinv{X_{k,\bot}^*\phi(A)H (X_k^*\phi(A)H)^\dagger}
        =
        \uniinv{\phi(\Lambda_{k,\perp}) X_{k,\perp}^* H (\phi(\Lambda_{k}) X_{k}^* H)^\dagger}\,.$$
    Now, since $\cH_p\subseteq\{x_{k+1},\ldots,x_{k+p_2}\}^\bot$ we have $X_{k,\bot}^*H=\begin{bmatrix}
        x_{k+1},\ldots,x_{k+p_2},x_{k+p_2+1},\ldots,v_n
    \end{bmatrix}^*H=[0 \; x_{k+p_2,\bot}^*H]$ and thus,
    \[
    \phi(\Lambda_{k,\perp}) X_{k,\perp}^* H
    =
    \begin{bmatrix}
        0&0\\0& \phi(\Lambda_{k+p_2,\bot})
    \end{bmatrix}
    X_{k,\bot}^*H
    \,.
    \]
On the other hand,  since $\phi(\Lambda_k)$ is invertible by hypothesis, we can apply \cite[Proposition 6.4]{Massey2025} to see that $(\phi(\Lambda_{k}) X_{k}^* H)^\dagger = (X_{k}^* H)^\dagger(\phi(\Lambda_{k})P_{R(X_k^*H)}) )^\dagger$.  As before, the previous facts together with the hypothesis $\cH_p\cap \cX_k^\perp=\{0\}$ imply that 
    \begin{align*}
        \uniinv{\tan\Theta(\cX_k\coma \phi(A)(\cH_p))}
        &=
        \uniinv{     \begin{bmatrix}
        0&0\\0& \phi(\Lambda_{k+p_2,\bot})
        \end{bmatrix}
        X_{k,\bot}^*H 
        \;\;
        (X_{k}^* H)^\dagger (\phi(\Lambda_{k})P_{R(X_k^*H)} )^\dagger
        }
        \\ 
        &\leq
         \|\phi(\Lambda_{k+p_2,\perp})\|_2 \   
        \|(\phi(\Lambda_{k})P_{R(X_k^*H)}) ^\dagger \|_2\
        \uniinv{X_{k,\bot}^*H 
        \;\;
        (X_{k}^* H)^\dagger} 
        \\ 
        &\leq 
        \|\phi(\Lambda_{k+p_2,\perp})\|_2 \   \|\phi(\Lambda_k)^{-1}\|_2 \ \uniinv{\tan\Theta(\cX_k,\cH_p)} \,. 
        \tag*{\qedhere}
    \end{align*}
\end{proof}

As a consequence of the previous results and Theorem \ref{teo dist a adm2}, we can now present the

\begin{proof}[Proof of Theorem \ref{teo se aprox a admis}]
Let $\cW\subset \K^n$ with $\dim\cW=r$, and let $p_1,\, p_2$ satisfy the conditions of Lemma \ref{lema H_p} with respect to the orthonormal basis $\{x_1,\ldots,x_n\}$ of $\K^n$ formed by the columns of $X$. 
Let $\cH_p$ be the subspace constructed in Lemma \ref{lema H_p} and fix a polynomial $\phi\in \K[x]$ such that $\phi(\Lambda_k)$ is invertible. 

\noindent As a consequence of Theorem \ref{teo dist a adm2} there
 exists $\mathcal \cS\in \Lambda\text{-adm}_h(A)$ such that, for every u.i.n. $\uniinv{\cdot}$, 
\begin{equation}\label{eq con de teo 42}
 \uniinv{\sin \Theta(\mathcal S,\phi(A)(\cH_p))}\leq
 \uniinv{\sin \Theta(\cX_j,\phi(A)(\cH_p))}+
 \uniinv{\sin \Theta(\cX_k,\phi(A)(\cH_p))}\,.
\end{equation}
Using Lemma \ref{lema H_p 2} we now get that
\begin{align}\label{ecuac con hp teo}
 \uniinv{\sin \Theta(\mathcal S,\phi(A)(\cH_p))}\leq\quad& 
 \|\phi(\Lambda_j)^{-1}\|_2\ \|\phi(\Lambda_{j+p_1,\perp})\|_2
        \;\uniinv{ \tan \Theta(\cX_j,\, \cH_p)}+ 
        \nonumber
        \\
         & \|\phi(\Lambda_k)^{-1}\|_2\ \|\phi(\Lambda_{k+p_2,\perp})\|_2
        \;\uniinv{ \tan\Theta(\cX_k,\, \cH_p))}\,.
\end{align}
Arguing as in the proof of Lemma \ref{lema H_p 2} (see Eq. \eqref{eq dim fiahp}) we get that $\text{rank}(\phi(A)(\cH_p))=\dim \cH_p\geq h=\dim\cS$. By the monotonicity of the principal angles and the inclusion $\phi(A)(\cH_p)\subseteq  \phi(A)(\cW)$,
we get that $\uniinv{\sin \Theta(\mathcal S,\phi(A)(\cW))}\leq \uniinv{\sin\Theta(\mathcal S,\phi(A)(\cH_p))}$. The result now follows from the previous fact and Eq. \eqref{ecuac con hp teo}. Finally, notice that if $j=0$ then
$\cX_j=\{0\}$ and in this case we have that $\Theta(\cX_j,\cH_p)=0$; if
$k=\rk(A)$ and $\phi(0)=0$ then $\cX_k=R(A)$ and the second term in the RHS of Eq. \eqref{eq con de teo 42} is zero, since $\phi(A)(\cH_p)\subset R(A)$.
\end{proof}

\subsection{Proof of Theorem \ref{teo cuali porRR y admis1}}\label{sec6.3}

We first recall the general setting: we let $A=X\Lambda X^*$ be an eigendecomposition of $A$ and let $\Lambda\text{-adm}_\ell(A)$ denote the class of $\ell$-dimensional $\Lambda$-admissible subspaces of $A$. Let $Q\in\K^{n\times r}$ have orthonormal columns; we consider an eigendecomposition $Q^*AQ=\Omega \widehat \Lambda \Omega^*$. Then, we consider: $$\Omega=[\Omega_1 \quad \Omega_2] \py \widehat \Lambda=
\left[\begin{array}{cc}\widehat \Lambda_1& \\  & \widehat \Lambda_2\end{array}\right] \peso{with} \Omega_1\in \K^{r\times h}\peso{,}
\widehat \Lambda_1\in \K^{h\times h}\,,$$
and $ \widehat  X:=Q \Omega =[Q\Omega_1  \quad Q\Omega_2 ]=[\widehat  X_1 \quad \widehat  X_2]
\,.$ 
Also let
$\widehat X_3
\in \K^{n\times (n-r)}$ be such that 
$[\widehat X_1\quad \widehat X_2\quad \widehat X_3]\in\cU(n)$. We further set: 
\beq\label{eq Atil2}
\widetilde A:=[\widehat X_1\quad \widehat X_2\quad \widehat X_3]^*\,A\ [\widehat X_1\quad \widehat X_2\quad \widehat X_3]=
\left[\begin{array}{ccc}
\widehat \Lambda_1 & 0 & R_1^*\\
0& \widehat \Lambda_2  & R_2^*\\
R_1 & R_2 & A_3 \end{array}\right]\ \ \text{with} \ \ 
\widehat \Lambda_1=\left[\begin{array}{cc}
\widehat \Lambda_{11} & 0 \\
0& \widehat \Lambda_{12} 
\end{array}\right]
\,,
\eeq
where $\widehat \Lambda_{11}\in\K^{j\times j}$,  $\widehat \Lambda_{12}\in\K^{(h-j)\times (h-j)}$
$$
R_1=\widehat X_3^* A \widehat X_1 =[R_{11}\ R_{12}]\peso{,} 
R_2=\widehat X_3^* A \widehat X_2 \peso{,} 
R:=[R_1 \ \ R_2]
\py 
A_3=\widehat X_3^* A \widehat X_3
\,
$$ where $R_{11}\in \K^{(n-r)\times j}$ and  
$R_{12}\in \K^{(n-r)\times (r-j)}$. Finally, we define the gaps:
$$\widetilde{\text{Gap}}:=\min |\la(\widehat \Lambda_{11})-(\la_{j+1},\ldots,\la_n)| \ \ , \ \ 
    \widehat{\text{Gap}}(l):=\min| \la(\widehat \Lambda_l)-(\la_{k+1},\ldots,\la_n)| \ , \ l=1,\,2 \,, $$ 
and $\text{Gap}_i:=\min|(\la_1,\ldots,\la_i) - \la(A_3)|$ for $i=j,\,k$.

\begin{teo}\label{teo varias cotas}
    Consider the previous notation. Then, for every u.i.n $\uniinv{\cdot}$ we have that: 
    \begin{eqnarray}
        \label{eq cotas1} \|{\sin\Theta(\cX_j,\,R(\widehat{X}))}\|
        &&\leq
        \frac{\uniinv{R}}{{\rm Gap_j}}\,,
        \\
        \label{eq cotas2new} 
        \|{\sin\Theta(\cX_j,\,R(\widehat{X}))}\|
        \leq
        \|{\sin \Theta(\cX_j,\,R(\widehat{X}_1))}\|
                &&
        \leq
        \frac{\uniinv{R_{11}}}{\widetilde{{\rm Gap}}}
        \,,
        \\
        \label{eq cotas3} 
        \|{\sin \Theta(\cX_k,\,R(\widehat{X}_1))}\|
        &&\leq
        \frac{\uniinv{R_1}}{\widehat{{\rm Gap}}(1)}\,,
    \\ \label{eq cotas4} 
        \|{\sin\Theta(\cX_k,\,R(\widehat X))}\|&&\leq \frac{\uniinv{R}}{{\rm Gap}_k}
         \hspace{2.83cm}  \text{if}\quad  k\leq r
        \,,
    \\ \label{eq cotas5} 
        \|{\sin\Theta(\cX_k,\,R(\widehat X))}\|
        &&\leq
        \frac{\uniinv{R_1}}{\widehat{{\rm Gap}}(1)}
        +
        \frac{\uniinv{R_2}}{\widehat{{\rm Gap}}(2)}
        \quad\text{if}\quad r< k         \,.
    \end{eqnarray}
    \noindent where the second term in Eq. \eqref{eq cotas5} should be omitted if $r=h$.
\end{teo}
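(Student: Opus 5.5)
All five estimates will be derived from the generalized Davis--Kahan $\sin\Theta$ theorem, in the form that works for any u.i.n.: if $M=M^*$, $\cU$ is spanned by orthonormal columns $U$, and $\cY$ is an invariant subspace of $A$ spanned by orthonormal columns $Y$ with $Y^*AY=\Lambda_Y$, then
\[
Y_\perp^*(AU-U\,U^*AU)=\Lambda_{Y_\perp}Y_\perp^*U - Y_\perp^*U\,(U^*AU).
\]
This is a Sylvester equation for $Y_\perp^*U$. When the spectra of $\Lambda_{Y_\perp}$ and $U^*AU$ are separated by $g>0$, the unitarily invariant Sylvester bound (Bhatia, Thm.\ VII.2.8, with constant $1$ since both coefficients are Hermitian) gives $\|Y_\perp^*U\|\leq \|\text{residual}\|/g$. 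In the reverse orientation, we take $Y=\widehat X_3$ and $U$ spanning an eigenspace of $A$. For each item the work is to choose the pair, compute the residual from Eq.\ \eqref{eq Atil2}, and identify the gap.

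For \eqref{eq cotas1} and \eqref{eq cotas4}, use the reverse orientation. Put $U=X_i$ for $i=j,k$, so that $AX_i=X_i\Lambda_i$. Then $\widehat X_3^*AX_i - A_3\widehat X_3^*X_i=\widehat X_3^*X_i\Lambda_i - A_3\widehat X_3^*X_i$. On the other hand, $\widehat X_3^*A=A_3\widehat X_3^*+R_1\widehat X_1^*+R_2\widehat X_2^*$, so
\[
\widehat X_3^*X_i\Lambda_i-A_3\widehat X_3^*X_i=R\,\widehat X^*X_i .
\]
The norm of the right-hand side is at most $\|R\|$. The spectra separated here are $\la_1,\ldots,\la_i$ and $\la(A_3)$, so the separation is $\mathrm{Gap}_i$. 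Since $\|\widehat X_3^*X_i\|=\|\sin\Theta(\cX_i,R(\widehat X))\|$, this gives both bounds. For $i=k$ we use $k\leq r$, which makes the dimension count correct; for $i=j$ it holds because $j<h\leq r$.

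For \eqref{eq cotas2new}, the first inequality is monotonicity \eqref{eq monotonia de ang2}. For the second, take $U=\widehat X_{11}$, the first $j$ columns of $\widehat X_1$, which span a $j$-dimensional space. Its compression is $\widehat\Lambda_{11}$, and its residual is $\widehat X_3R_{11}$, because $\widehat X^*A\widehat X_{11}$ vanishes off the diagonal block. Apply the direct orientation with $\cY=\cX_j$ and $Y_\perp=X_{j,\perp}$. This gives $\|X_{j,\perp}^*\widehat X_{11}\|\leq\|R_{11}\|/\widetilde{\mathrm{Gap}}$, since the separation between $\la(\widehat\Lambda_{11})$ and $\la_{j+1},\ldots,\la_n$ is exactly $\widetilde{\mathrm{Gap}}$. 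The left-hand side equals $\|\sin\Theta(\cX_j,R(\widehat X_{11}))\|$, because the subspaces have equal dimension $j$. Then monotonicity, with $R(\widehat X_{11})\subset R(\widehat X_1)$, passes to $\Theta(\cX_j,R(\widehat X_1))$.

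Similarly, \eqref{eq cotas3} follows from $U=\widehat X_1$, residual $\widehat X_3R_1$, and $Y_\perp=X_{k,\perp}$. The gap is $\widehat{\mathrm{Gap}}(1)$, and the bound is on $\|X_{k,\perp}^*\widehat X_1\|=\|\sin\Theta(R(\widehat X_1),\cX_k)\|$, with $h<k$ so the smaller subspace is $R(\widehat X_1)$. This equals the quantity in the statement, with the paper's convention on argument order.

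For \eqref{eq cotas5}, where $r<k$, we have $\dim R(\widehat X)=r<k$, so the relevant angles are those of $R(\widehat X)$ against $\cX_k$. Split $R(\widehat X)=R(\widehat X_1)\oplus R(\widehat X_2)$ and use $(I-P_{\cX_k})P_{R(\widehat X)}=(I-P_{\cX_k})(P_{R(\widehat X_1)}+P_{R(\widehat X_2)})$, as in Lemma \ref{lem acot sum subs}; the triangle inequality then splits the norm. Each piece is handled by the argument for \eqref{eq cotas3}, with $U=\widehat X_2$, residual $\widehat X_3R_2$, and gap $\widehat{\mathrm{Gap}}(2)$ for the second piece. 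If $r=h$, the second piece is absent.

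The main obstacle is bookkeeping rather than ideas. First, the orientation and dimension matching of $\Theta(\cdot,\cdot)$ must be checked in each item: which subspace is smaller, and hence which sine matrix equals $Y_\perp^*U$ or $\widehat X_3^*X_i$. Second, the Sylvester bound needs the right form of separation: two sets of eigenvalues at positive distance, not interval separation. For the u.i.n. version this costs a constant in general, so one should check whether the paper's constant $1$ requires Bhatia's $\pi/2$-free statement. If it does, I would instead invoke the version of Davis--Kahan for Hermitian coefficients that holds for the Frobenius and operator norms, or cite \cite{Nakats} for the exact form used there.
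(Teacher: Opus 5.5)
Your proposal is essentially the paper's proof: the paper also reads off Sylvester equations from $\widetilde A\,[\widehat X_1\ \widehat X_2\ \widehat X_3]^*E_L=[\widehat X_1\ \widehat X_2\ \widehat X_3]^*E_L\Lambda_L$ and the block form \eqref{eq Atil2}. It uses $E_L=X_j$ and $E_L=X_k$ against $A_3$ for \eqref{eq cotas1} and \eqref{eq cotas4}, $E_L=X_{j,\perp}$ against $\widehat\Lambda_{11}$ followed by monotonicity for \eqref{eq cotas2new}, and $E_L=X_{k,\perp}$ against $\widehat\Lambda_1$ and $\widehat\Lambda_2$ (with the same splitting of $\widehat X$ into $\widehat X_1,\widehat X_2$) for \eqref{eq cotas3} and \eqref{eq cotas5}.

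Your worry about the constant is justified, and it affects the paper's appeal to ``the well-known bound for Sylvester's equations'' equally. With gaps defined as $\min|\cdot-\cdot|$ (mere set separation), constant $1$ is guaranteed only for $\|\cdot\|_F$ or under Davis--Kahan interval separation (which holds in the regime of Remark \ref{rems sobre cotas con residuos}). By contrast, Bhatia's set-separation bound for Hermitian coefficients carries the factor $\pi/2$, and the constant-$1$ set-separation form of Davis--Kahan is a Frobenius-norm statement. So your attribution of constant $1$ to Bhatia's theorem, and your operator-norm fallback, are not correct as stated.
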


\begin{proof}
    The proof follows the outline of \cite[Theo. 5.1]{Nakats}. Let $L$ be an (increasingly ordered) subset of $\{1,\ldots,n\}$ with $l$ elements. Denote $E_L\in\K^{n\times l}$ the matrix with columns $x_l$ for every $l\in L$ and $\Lambda_L\in\K^{l\times l}$ the diagonal matrix with diagonal entries $\lambda_l$ for $l\in L$. With this notation, we have that $AE_L = E_L\Lambda_L$ and, by 
    Eq. \eqref{eq Atil2} we also have that 
    $\tilde{A} [\widehat X_1\quad \widehat X_2\quad \widehat X_3]^*E_L = [\widehat X_1\quad \widehat X_2\quad \widehat X_3]^*E_L \Lambda_L\,.$

Using the previous identity and equating the corresponding blocks    
       (see  Eq. \eqref{eq Atil2}), we get:
   $$        R_1^*\, \widehat{X}_3^*E_L  =   \widehat{X}_1^*E_L \Lambda_L - \widehat{\Lambda}_1 \widehat{X}_1^*E_L  \peso{,}
        R_2^*\,\widehat{X}_3^*E_L =  \widehat{X}_2^*E_L \Lambda_L - \widehat{\Lambda}_2  \widehat{X}_2^*E_L  \,, $$
$$        R\, \begin{bmatrix}
            \widehat{X}_1^*\\\widehat{X}_2^*
        \end{bmatrix} E_L
        =  \widehat{X}_3^*E_L \Lambda_L - A_3  \widehat{X}_3^*E_L 
        \,.$$
    By the well-known bound for Sylvester's equations (see [19, Ch. V]) we get that
    \[
        \uniinv{\widehat{X}_1^*E_L}
        \leq
        \frac{\uniinv{R_1^*\, \widehat{X}_3^*E_L }}{\min|\Lambda_L-\widehat{\Lambda}_1|}
        \,,\;\quad
        \uniinv{\widehat{X}_2^*E_L}
        \leq
        \frac{\uniinv{R_2^*\, \widehat{X}_3^*E_L }}{\min|\Lambda_L-\widehat{\Lambda}_2|}
        \,,\;\quad
        \uniinv{\widehat{X}_3^*E_L}
        \leq
        \frac{\uniinv{R\,  \begin{bmatrix}
            \widehat{X}_1^*\\ \widehat{X}_2^*
        \end{bmatrix} E_L}}{\min|\Lambda_L-\lambda(A_3)|}\,.
    \]

\noindent 
First, take $L=\{1,\ldots,j\}$. Since $j\leq h\leq r$ we have that
    \[
    \|{\sin\Theta(\cX_j,\,R(\widehat{X}))}\|
    =
    \uniinv{\widehat{X}_3^*E_L}
    \leq
    \frac{\uniinv{R\,  \begin{bmatrix}
        \widehat{X}_1^*\\ \widehat{X}_2^*
    \end{bmatrix} E_L}}{\min|\Lambda_L-\lambda(A_3)|}
    \leq
    \frac{\uniinv{R}}{{\rm Gap_j}}\,,
    \]
    \noindent which proves Eq. \eqref{eq cotas1}.
        Now, let us consider the auxiliary partition $\widehat{X}_1=[\widehat{X}_{11}\,\widehat{X}_{12}]$ where  the columns of $\widehat{X}_{11}\in\K^{n\times j}$ correspond to the Ritz vectors associated with the largest $j$ eigenvalues of $Q^* AQ$; also, consider the partition 
    of $R_1$ and of $\widehat{\Lambda}$ as before.
     Following the first part of the proof with this auxiliary decomposition yields
    \[
    \uniinv{\widehat{X}_{11}^*E_L}
    \leq
    \frac{\uniinv{R_{11}^*\widehat{X}_3^*E_L}}{\min|\Lambda_{L} - \widehat{\Lambda}_{11}|}
    \leq
    \frac{\uniinv{R_{11}^*}}{\min|\Lambda_{L} - \widehat{\Lambda}_{11}|}\,.
    \]
    \noindent     By the monotonicity of the principal angles, if we take $L=\{j+1,\ldots,n\}$ we get that
    \[
    \|{\sin\Theta(\cX_j,\,R(\widehat{X}_{1}))}\|
    \leq
    \|{\sin \Theta(\cX_j,\,R(\widehat{X}_{11}))}\|
    =
    \uniinv{\widehat{X}_{11}^*E_L}\,,    
    \]
    which proves Eq. \eqref{eq cotas2new}.
        Next, take $L=\{k+1,\ldots,n\}$. Since $h< k$ we have that
    \[
    \|{\sin\Theta(\cX_k,\,R(\widehat{X}_1))}\|
    =
    \uniinv{E_L^*\widehat{X}_1}
    =
    \uniinv{\widehat{X}_1^*E_L}
    \leq
    \frac{\uniinv{R_1^*\, \widehat{X}_3^*E_L }}{\min|\Lambda_L-\widehat{\Lambda}_1|}
    \leq
    \frac{\uniinv{R_1}}{\widehat{{\rm Gap}}(1)}\,,
    \]
    \noindent which is Eq. \eqref{eq cotas3}. Finally, to estimate $\uniinv{\sin\Theta(\cX_k\,R(\widehat{X}))}$ we consider separately the cases where $r< k$ or $k\leq r$. In the latter case, we can take $L=\{1,\ldots,k\}$ and use the same strategy as that of the first part of the proof to obtain 
    \[
    \|{\sin \Theta(\cX_k,\,R(\widehat X))}\|\leq \frac{\uniinv{R}}{{\rm Gap}_k}\,,
    \]
    \noindent which is Eq. \eqref{eq cotas4}. 
    If we assume that $r< k$,
    and take $L=\{k+1,\ldots,n\}$, then 
    $$        \uniinv{\sin(\Theta(\cX_k,\,R(\widehat{X}))}
        =
        \uniinv{
        \begin{bmatrix}
                E_L^*\widehat{X}_1\\ E_L^*\widehat{X}_2
            \end{bmatrix}
            }
        \leq         \uniinv{\widehat{X}_1^*E_L}+\uniinv{\widehat{X}_2^*E_L}
         \leq
        \frac{\uniinv{R_1}}{\widehat{{\rm Gap}}(1)}+\frac{\uniinv{R_2}}{\widehat{{\rm Gap}}(2)}
        \,$$
\noindent which is Eq. \eqref{eq cotas5}. Finally, notice that in the case that $r=h$, there is no matrix $\widehat{X}_2$, so the terms involving it should not be accounted for.
\end{proof}

\begin{proof}[Proof of Theorem \ref{teo cuali porRR y admis1}]
By Theorem \ref{teo varias cotas} we get that 
$$
\|{\sin\Theta(\cX_j,\,R(\widehat{X}_1))}\|
\leq
\frac{\|{R_{11}}\|}{\widetilde{\text{Gap}}}
\py 
\|{\sin\Theta(\cX_k,R(\widehat X_1))}\|\leq \frac{\|{R_1}\|}{\widehat{{\rm Gap}}(1)}\,.
$$
By Theorem \ref{teo dist a adm2}, there exists $\cS_1\in\Lambda\text{-adm}_h(A)$ such that 
$$\|{\sin\Theta(\cS_1,R(\widehat X_1))}\|\leq \frac{\|{R_{11}}\|}{\widetilde{\text{Gap}}}+\frac{\uniinv{R_1}}{\widehat{{\rm Gap}}(1)}\,.$$ Notice that the inequality in Eq. \eqref{eq estim dis amd1} follows from the previous fact.
Assume now that $k\leq r$ and that $\dim(P_{R(\widehat X)}(\cX_k))\geq h$. Then, we can choose an $h$-dimensional subspace $\cT$ such that $P_{R(\widehat{X})}(\cX_j)\subseteq\cT\subseteq P_{R(\widehat{X})}(\cX_k)$. In this case, by the monotonicity of the principal angles, we get that
\[
\uniinv{\sin\Theta(\cX_j,\cT)}
=
\|{\sin\Theta(\cX_j,R(\widehat X))}\|
\py
\]
\[
\|{\sin\Theta(\cX_k,\cT)}\|
\leq
\|{\sin\Theta(\cX_k,P_{R(\widehat{X})}(\cX_k))}\|
=
\|{\sin\Theta(\cX_k,R(\widehat X))}\|
\,.
\]
Now, by combining Theorem \ref{teo dist a adm2} with Theorem \ref{teo varias cotas} (and monotonicity once more) there exists $\cS_2\in \Lambda\text{-adm}_h(A)$ such that
\begin{align*}
    \|{\sin\Theta(\cS_2,R(\widehat{X}))}\|
    &\leq
    \|{\sin\Theta(\cS_2,\cT)}\|
    \leq     
    \|{\sin\Theta(\cX_j,\cT)}\|+\|{\sin\Theta(\cX_k,\cT)}\|
    \\     &\leq \|{\sin\Theta(\cX_j,R(\widehat X))}\| + \|{\sin\Theta(\cX_k,R(\widehat X))}\|
    \leq
    \frac{\|{R}\|}{{\rm Gap}_j}+ \frac{\|{R}\|}{{\rm Gap}_k}\,.
\end{align*}
    Notice that the inequality in Eq. \eqref{eq estim dis amd2} follows from the previous fact. 
    
    Assume now that $r< k$. Since $j<h\leq r< k$ then (considering $r$ in the index set of the cluster) by Theorem \ref{teo dist a adm2} and Theorem \ref{teo varias cotas} there exists $\cS_3 \in\Lambda\text{-adm}_r(A)$ of $A$ such that
    \[
    \|{\sin\Theta(\cS_3,R(\widehat X))}\|
    \leq
    \frac{\|{R}\|}{{\rm Gap}_j}
    +
    \frac{\|{R_1}\|}{\widehat{{\rm Gap}}(1)}
    +
    \frac{\|{R_2}\|}{\widehat{{\rm Gap}}(2)}\,.
    \]
    Finally, as before, Eq. \eqref{eq estim dis amd3} follows from the previous inequality.
    \end{proof}

\subsection{Proof of Theorem \ref{teo sensitivity of admiss}}\label{subsec prueb sensit}

Let us recall the following notations: $\cB
=
\{B\in\K^{n\times n} \,:\, B=B^*\,,\; \lambda_h(B)>\lambda_{h+1}(B)\}$ and for $B\in\cB$ we have set
$r_B=(\lambda_h(B)-\lambda_{h+1}(B))/2>0$.

\begin{proof}[Proof of Proposition \ref{pro cond num 1}]
    Fix $B\in\cB$. Suppose that $C\in\cB$ is such that $0<\|B-C\|_2< r_B$ and denote as $\cX_h(B)$  and $\cX_h(C)$ the $h$-dimensional dominant eigenspaces of $B$ and $C$ respectively. From Weyl's inequality for eigenvalues, we have that $\lambda_h(C) -\lambda_{h+1}(B)\geq \lambda_{h}(B)-\lambda_{h+1}(B) -\|B-C\|_2>0$. This allows us to apply \cite[Theo. VII.3.1]{Bhatia} and obtain the following:
    \[
    \uniinv{\sin\Theta(\cX_h(B),\,\cX_h(C) )}
    \leq
    \frac{\uniinv{B-C}}{\lambda_h(B)-\lambda_{h+1}(B)-\|B-C\|_2}\,.
    \]
    \noindent 
    Since all u.i.n. in $\K^{n\times n}$ are equivalent, this implies
    \begin{align*}
        \kappa[g](B)
        &=
        \lim_{\epsilon\rightarrow 0}
        \;
        \sup_{
        \substack{C\in \cB\\
        0<\uniinv{B-C}<\epsilon}
        }
        \frac{\uniinv{\sin\Theta(\cX_h(B),\,\cX_h(C) )}}{\uniinv{B-C}}
        \\
        &\leq
        \lim_{\epsilon\rightarrow 0}
        \;
        \sup_{
        \substack{C\in \cB\\
        0<\|{B-C}\|_2<\epsilon}}
        \;    
        \frac{1}{\lambda_h(B)-\lambda_{h+1}(B)-\|B-C\|_2}
        =
        \frac{1}{\lambda_h(B)-\lambda_{h+1}(B)}
        \,.
        \tag*{\qedhere}
    \end{align*}
\end{proof}
\noindent Before moving forward to the proof of Theorem \ref{teo sensitivity of admiss}, recall that we have fixed $0\leq j<h< k< n$ and considered 
$
\cA
=
\{B\in\K^{n\times n} \,:\, B=B^*\,,\; \lambda_j(B)>\lambda_{j+1}(B) \text{ and } \lambda_k(B)>\lambda_{k+1}(B)\}
$. If $B\in\cA$, the class $\Lambda\text{-adm}_h(B)$ (with respect to the indexes $j$ and $k$) is well defined. 
Also, for two matrices $A,\,B\in\cA$ and a fixed u.i.n. $\|\cdot\|$ we consider the Hausdorff distance between the sets of $h$-dimensional $\Lambda$-admissible subspaces of $A$ and $B$, given by
\[
d_H(\Lambda\text{-adm}_h(A),\,\Lambda\text{-adm}_h(B))
=
\max
\left\{
\;
\sup_{\cS_A}
\;
\inf_{\cS_B}
\uniinv{\sin(\Theta(\cS_A,\,\cS_B))}
\coma
\sup_{\cS_B}
\;
\inf_{\cS_A}
\uniinv{\sin(\Theta(\cS_A,\,\cS_B))}
\;
\right\}
\]

\noindent where $\cS_A\in\Lambda\text{-adm}_h(A)$ and $\cS_B\in\Lambda\text{-adm}_h(B)$. The next lemma bounds this distance from above.

\begin{lem}\label{lem cond num aux}
Consider $\cA$ as above.
For $B\in\cA$, let $\cX_j(B)$ and $\cX_k(B)$ denote its dominant eigenspaces of dimensions $j$ and $k$, respectively (which are well defined). Then, for $A,\,B\in \cA$,
    \[
    d_H(\Lambda\text{-adm}_h(A),\,\Lambda\text{-adm}_h(B))
    \leq
    \uniinv{\sin(\Theta(\cX_j(A),\,\cX_j(B))}
    +
    \uniinv{\sin(\Theta(\cX_k(A),\,\cX_k(B))}\,.
    \]
\end{lem}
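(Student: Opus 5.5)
By the symmetry of the Hausdorff distance in $A$ and $B$, it suffices to show one thing: for every $\cS_B\in\Lambda\text{-adm}_h(B)$ there is some $\cS_A\in\Lambda\text{-adm}_h(A)$ with
$$\uniinv{\sin\Theta(\cS_A,\cS_B)}\leq \uniinv{\sin\Theta(\cX_j(A),\cX_j(B))}+\uniinv{\sin\Theta(\cX_k(A),\cX_k(B))}\,.$$
Exchanging the roles of $A$ and $B$ then bounds the other half of the maximum defining $d_H$ by the same quantity, since $\sin\Theta$ is symmetric for subspaces of equal dimension.

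The plan is to fix $\cS_B$ and apply Theorem \ref{teo dist a adm2} to the matrix $A$ with $\cT=\cS_B$, which has $t=h$. This gives some $\cS_A\in\Lambda\text{-adm}_h(A)$ with
$$\uniinv{\sin\Theta(\cS_A,\cS_B)}\leq\uniinv{\sin\Theta(\cX_j(A),\cS_B)}+\uniinv{\sin\Theta(\cX_k(A),\cS_B)}\,.$$
We then compare each term with angles between the eigenspaces of $A$ and $B$.

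\emph{First term.} Since $\cX_j(B)\subseteq\cS_B$, monotonicity of principal angles (Eq. \eqref{eq monotonia de ang2}, enlarging the second argument) gives $\uniinv{\sin\Theta(\cX_j(A),\cS_B)}\leq\uniinv{\sin\Theta(\cX_j(A),\cX_j(B))}$.

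\emph{Second term.} This is the delicate one, because $\dim\cS_B=h<k=\dim\cX_k(A)$. Here $\Theta(\cX_k(A),\cS_B)$ is the $h$-vector of angles computed from the smaller space $\cS_B$. Using $\sin\theta_{h+1-i}(\cS_B,\cX_k(A))=\sigma_i((I-P_{\cX_k(A)})P_{\cS_B})$ and $\cS_B\subseteq\cX_k(B)$, the second inequality in \eqref{eq monotonia de ang} (shrinking the first argument from $\cX_k(B)$ to $\cS_B$) yields the entrywise bound on the largest $h$ sines. Hence
$$\uniinv{\sin\Theta(\cS_B,\cX_k(A))}\leq\uniinv{\sin\Theta(\cX_k(B),\cX_k(A))}\,.$$

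\emph{Hypothesis check.} Theorem \ref{teo dist a adm2} requires $\Theta(\cX_k(A),\cS_B)<\frac{\pi}{2}I$. If this fails, some angle equals $\pi/2$, and the bound just obtained gives $\uniinv{\sin\Theta(\cX_k(A),\cX_k(B))}\geq \uniinv{\operatorname{diag}(1,0,\ldots,0)}$. In that case I will fall back on the crude bound $\uniinv{\sin\Theta(\cS_A,\cS_B)}\leq\uniinv{I_h}$, which holds for any $\cS_A$. This may fail to be dominated by the right-hand side for norms like Frobenius, so I will handle it by directly mimicking the proof of Theorem \ref{teo dist a adm2}. Write $\cV=P_{\cX_k(A)}(\cS_B)$ and use Lemma \ref{lema arrimar subespacio}. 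If $\dim\cV<h$, enlarge $\cV$ inside $\cX_k(A)$ to dimension $h$; the extra directions only contribute angles already counted as $\pi/2$ in $\Theta(\cX_k(A),\cS_B)$.

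The main obstacle is making the triangle-inequality step $\uniinv{\sin\Theta(\cS_A,\cS_B)}\leq\uniinv{\sin\Theta(\cS_A,\cV)}+\uniinv{\sin\Theta(\cV,\cS_B)}$ and this degenerate case rigorous. For the purposes of the condition-number theorem, it is enough that the generic case holds whenever $A$ and $B$ are close, and that is automatic by the previous estimate.
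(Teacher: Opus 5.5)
Your main line is the paper's proof. You fix $\cS_B$, apply Theorem \ref{teo dist a adm2} to $A$ with $\cT=\cS_B$, bound both terms by monotonicity using $\cX_j(B)\subseteq\cS_B\subseteq\cX_k(B)$, and then exchange $A$ and $B$. The paper applies Theorem \ref{teo dist a adm2} without checking its hypothesis $\Theta(\cX_k(A),\cS_B)<\frac{\pi}{2}I_h$, so your worry about the case $\cS_B\cap\cX_k(A)^\perp\neq\{0\}$ is legitimate. As written, however, you leave that case open and fall back on ``the generic case suffices for Theorem \ref{teo sensitivity of admiss}''. That does not prove the lemma as stated, which is for arbitrary $A,B\in\cA$. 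Also, the triangle-inequality step is not a real obstacle once all spaces involved are $h$-dimensional, because $\uniinv{\sin\Theta(\cdot,\cdot)}$ is a metric on $\cG_{n,h}$ (the paper uses exactly this).

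Your sketched repair works and can be finished as follows.
\begin{itemize}
\item Put $\cV=P_{\cX_k(A)}(\cS_B)$. For $x\in\cX_k(A)$ and $s\in\cS_B$ we have $\langle x,s\rangle=\langle x,P_{\cX_k(A)}s\rangle$, so $\cX_k(A)\ominus\cV\subseteq\cS_B^\perp$.
\item This space has dimension $k-\dim\cV\geq h-\dim\cV$, so you can choose $\cD\subseteq\cX_k(A)\ominus\cV$ with $\cV':=\cV\oplus\cD$ of dimension $h$.
\item Then $P_{\cV'}P_{\cS_B}=P_{\cV}P_{\cS_B}=P_{\cX_k(A)}P_{\cS_B}$, hence $\Theta(\cV',\cS_B)=\Theta(\cX_k(A),\cS_B)$.
\item By the first item of Lemma \ref{lema arrimar subespacio} and $\cV\subseteq\cV'$, you get $P_{\cX_j(A)}P_{\cS_B}P_{\cX_j(A)}=P_{\cX_j(A)}(P_{\cX_k(A)}P_{\cS_B}P_{\cX_k(A)})P_{\cX_j(A)}\leq P_{\cX_j(A)}P_{\cV'}P_{\cX_j(A)}$. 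Comparing eigenvalues gives $\Theta(\cX_j(A),\cV')\leq\Theta(\cX_j(A),\cS_B)$ entrywise.
\item Proposition \ref{pro dist a adm 2}, applied to $\cV'\subseteq\cX_k(A)$, gives $\cS_A\in\Lambda\text{-adm}_h(A)$ with $\uniinv{\sin\Theta(\cS_A,\cV')}=\uniinv{\sin\Theta(\cX_j(A),\cV')}$.
\item The triangle inequality in $\cG_{n,h}$ then yields $\uniinv{\sin\Theta(\cS_A,\cS_B)}\leq\uniinv{\sin\Theta(\cX_j(A),\cS_B)}+\uniinv{\sin\Theta(\cX_k(A),\cS_B)}$, with no angle hypothesis.
\end{itemize}
This is the paper's proof of Theorem \ref{teo dist a adm2} with $\cV$ padded by directions orthogonal to $\cS_B$. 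It shows that theorem holds unconditionally when $\dim\cT=h$. Combined with your two monotonicity estimates, it proves the lemma in full generality.
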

\begin{proof}
     Let $\cS_B\in\Lambda\text{-adm}_h(B)$. By monotonicity of the principal angles
    \[
    \uniinv{\sin(\Theta(\cS_B,\,\cX_j(A))}
    \leq
    \uniinv{\sin(\Theta(\cX_j(B),\,\cX_j(A))}
    \; , \;\;
    \uniinv{\sin(\Theta(\cS_B,\,\cX_k(A))}
    \leq
    \uniinv{\sin(\Theta(\cX_k(B),\,\cX_k(A))}
    \,.
    \]
    \noindent Then, by Theorem \ref{teo dist a adm2}, there exist a subspace $\cS_A\in\Lambda\text{-adm}_h(A)$ such that
    \[
    \uniinv{\sin(\Theta(\cS_B,\,\cS_A)}
    \leq
    \uniinv{\sin(\Theta(\cX_j(B),\,\cX_j(A))}
    +
    \uniinv{\sin(\Theta(\cX_k(B),\,\cX_k(A))}
    \,,  \ \text{so} 
    \]
    \[
    \inf_{\cS_A\in\Lambda\text{-adm}_h(A)}
    \uniinv{\sin(\Theta(\cS_B,\,\cS_A))}
    \leq
    \uniinv{\sin(\Theta(\cX_j(B),\,\cX_j(A))}
    +
    \uniinv{\sin(\Theta(\cX_k(B),\,\cX_k(A))}
    \,.
    \]
    \noindent Since $\cS_B\in\Lambda\text{-adm}_h(B)$ was arbitrary, we get that
    \begin{align*}
        \sup_{\cS_B\in\Lambda\text{-adm}_h(B)}
        &\;
        \inf_{\cS_A\in\Lambda\text{-adm}_h(A)}
        \uniinv{\sin(\Theta(\cS_B,\,\cS_A))}
                \leq
        \uniinv{\sin(\Theta(\cX_j(B),\,\cX_j(A))}
        +
    \uniinv{\sin(\Theta(\cX_k(B),\,\cX_k(A))}
    \end{align*}
    \noindent and by changing the roles of $A$ and $B$, we obtain the other bound needed to prove the statement.
\end{proof}

\begin{proof}[Proof of Theorem \ref{teo sensitivity of admiss}]
    First, recall that we are considering a fixed $A$ as in Notation \ref{nota sec 4} and in this context we have set $r_A:=\min\{\lambda_j-\lambda_{j+1}\,,\;\lambda_k-\lambda_{k+1}\}/2$. Now, following the ideas of the proof on Proposition \ref{pro cond num 1} we get that for every $B\in\cA$ such that $0<\|A-B\|<r_A$ we have
    \begin{align*}
        \uniinv{\sin(\Theta(\cX_l(B),\,\cX_l)}
        \leq&
        \frac{\uniinv{A-B}}{\lambda_l-\lambda_{l+1}-\|A-B\|_2}
        \quad\text{for}\quad
        l=j,\,k\,.
    \end{align*}
    \noindent Combining this fact with Lemma \ref{lem cond num aux}, results in 
    \begin{align*}
        \frac{d_H(\Lambda\text{-adm}_h(A),\,\Lambda\text{-adm}_h(B))}{\uniinv{A-B}}
        &\leq  
        \frac{1}{\lambda_j-\lambda_{j+1}-\|A-B\|_2}
        +
        \frac{1}{\lambda_k-\lambda_{k+1}-\|A-B\|_2}
        \,.
    \end{align*}
    \noindent for all $B\in\cA$ with $\|A-B\|<r_A$. Finally, as in the proof of Proposition \ref{pro cond num 1}, this implies that
        \begin{align*}
        \kappa[g](A)
        &=
        \lim_{\epsilon\rightarrow 0}
        \;
        \sup_{
        \substack{B\in \cA\\
        0<\uniinv{A-B}<\epsilon}
        }
        \frac{d_H(\Lambda\text{-adm}_h(A),\,\Lambda\text{-adm}_h(B))}{\uniinv{A-B}}
        \\
        &\leq
        \lim_{\epsilon\rightarrow 0}
        \;
        \sup_{
        \substack{B\in \cA\\
        0<\uniinv{A-B}<\epsilon}}
        \;    
        \frac{1}{\lambda_j-\lambda_{j+1}-\|A-B\|_2}
        +
        \frac{1}{\lambda_k-\lambda_{k+1}-\|A-B\|_2}
        \\
        &=
        \frac{1}{\lambda_j-\lambda_{j+1}}
        +
        \frac{1}{\lambda_k-\lambda_{k+1}}
        \,.
    \end{align*}
\end{proof}

Next, we show that the upper bound from Theorem \ref{teo sensitivity of admiss} can be sharp for the operator norm.

\begin{exa}\label{exa condnum sharp}
    Assume that $2\leq j<h<k<2k\leq n$.
    Let $\cX$ and $\cY$ be orthogonal subspaces of $\K^n$ of dimensions $j$ and $k-j$ respectively and set $A=\alpha P_\cX + \beta P_\cY$ for some $\alpha>\beta>0$. Thus,
    $\la_j=\alpha>\la_{j+1}=\beta=\la_k>\la_{k+1}=0$.
    The hypotheses on the dimensions of $\cX$ and $\cY$ allow us to find another pair of orthogonal subspaces $\cX'$ and $\cY'$  of dimensions $j$ and $k-j$ respectively such that $\cY\bot\cX'$ and $\cX\bot\cY'$ in such a way that $0<\theta_{\cX}:=\theta_j(\cX,\,\cX')<\pi/2$ and $0<\theta_{\cY}:=\theta_{k-j}(\cY,\,\cY')<\pi/2$ are arbitrarily small.
    If we let $B=\alpha P_{\cX'} + \beta P_{\cY'}$, the orthogonality between $\cX$ and $\cY$ implies 
    \begin{align*}
    \|A-B\|_2
    &=
    \|\alpha (P_\cX-P_{\cX'}) + \beta (P_\cY-P_{\cY'})\|_2
    \\
    &=
    \max\{\alpha\|P_\cX-P_{\cX'}\|_2\,;\;\beta\|P_\cY-P_{\cY'}\|_2\}
    =
    \max\{\alpha\sin(\theta_\cX)\,;\;\beta\sin(\theta_\cY)\}
    \,,
    \end{align*}
    \noindent and thus the matrices $B$ constructed this way can be arbitrarily close to $A$ by making $\theta_{\cX}$ and $\theta_{\cY}$ small.
    Next, notice that if $\cS_A\in\Lambda\text{-adm}_h(A)$, then $\cS_A=\cX\oplus\cD$ for some $(h-j)$-dimensional subspace $\cD\subseteq\cY$. Similarly, if $\cS_B\in\Lambda\text{-adm}_h(B)$ then $\cS_B=\cX'\oplus\cD'$ for some $(h-j)$-dimensional subspace $\cD'\subseteq\cY'$. By direct computation,
    \[
    \theta(\cS_A,\,\cS_B)
    =
    \theta(\cX\oplus\cD,\,\cX'\oplus\cD')
    =
    (\,\theta(\cX,\,\cX'),\,\theta(\cD,\,\cD')\,)^\downarrow
    \]
    \noindent and thus, for a fixed $\cS_B\in\Lambda\text{-adm}(B)$ where $\cS_B=\cX'\oplus\cD'$ as above we have
    \begin{align*}
        \inf_{\cS_A\in\Lambda\text{-adm}(A)}
        \|\sin\Theta(\cS_A,\,\cS_B)\|_{2}
        &=
        \inf_{\substack{\cD\subseteq\cY\\\dim\cD=h-j}}
        \max\{
        \|\sin\Theta(\cX,\,\cX')\|_{2}
        \,,\;
        \|\sin\Theta(\cD,\,\cD')\|_{2}
        \}
        \\
        &=
        \max\left\{
        \|\sin\Theta(\cX,\,\cX')\|_{2}
        \,,\;
        \inf_{\substack{\cD\subseteq\cY\\\dim\cD=h-j}}
        \|\sin\Theta(\cD,\,\cD')\|_{2}
        \right\}
        \\
        &=
                \max\{
        \sin\theta_{\cX}
        \,,\;
        \|\sin\Theta(P_\cY(\cD'),\,\cD')\|_{2}
        \}
        \,,
    \end{align*}
    \noindent where we have used 
    the fact that $\theta_{\cY}<\pi/2$. Also notice that $\theta(P_\cY(\cD'),\,\cD')=\theta(\cY,\,\cD')$. Then,
    \begin{align*}
        d_H(\Lambda\text{-adm}_h(A),\,\Lambda\text{-adm}_h(B))
        &=
        \sup_{\cS_B\in\Lambda\text{-adm}(B)}
        \inf_{\cS_A\in\Lambda\text{-adm}(A)}
        \|\sin\Theta(\cS_A,\,\cS_B)\|_{2}
        \\
        &=        \sup_{\substack{\cD'\subseteq\cY'\\\dim\cD'=h-j}}
        \max\{
        \sin\theta_{\cX}
        \,,\;
        \|\sin\Theta(\cY,\,\cD')\|_{2}
        \}
        \\
        &=
        \max\left\{
        \sin\theta_{\cX}
        \,,\;
        \sup_{\substack{\cD'\subseteq\cY'\\\dim\cD'=h-j}}
        \|\sin\Theta(\cY,\,\cD')\|_{2}
        \right\}
        \\
        &=
        \max\left\{
        \sin\theta_{\cX}
        \,,\;
        \sin\theta_{\cY}\right\}
        \,.
    \end{align*}
    \noindent 
        We now take $0<\alpha\,\sin \theta_{\cX}<\beta\,\sin\theta_{\cY}$ and make 
    $\theta_{\cX}$ and $\theta_{\cY}$ become arbitrarily small. The previous facts together with  Theorem \ref{teo sensitivity of admiss} show that
    \[
    \frac{1}{\beta}\leq \kappa[f](A)
    \leq
        \frac{1}{\la_j-\la_{j+1}}
    +
    \frac{1}{\la_{k}-\la_{k+1}}
=
    \frac{1}{\alpha-\beta}
    +
    \frac{1}{\beta}
    =
    \frac{1}{\beta}
    \;
    \frac{\alpha}{\alpha-\beta}\,.
    \]
For example, if we set $\alpha=12$ and $\beta=1$ then, 
 $0\leq (\frac{1}{\la_j-\la_{j+1}}+\frac{1}{\la_{k}-\la_{k+1}})-\kappa[f](A)\leq 0.1\,.$
\end{exa}

\end{document}